\documentclass[reqno, 11pt]{amsart}
\usepackage[margin=2.5cm]{geometry}
\usepackage{mathrsfs}
\usepackage{amsmath}
\usepackage{amsthm}
\usepackage{amsfonts}
\usepackage{amssymb}
\usepackage{url}
\usepackage{enumerate}
\usepackage[pdftex,bookmarks=true]{hyperref}
  \usepackage[usenames, dvipsnames]{color}
\usepackage{verbatim}
\usepackage{pdfsync}
\usepackage{graphicx}
\usepackage{xcolor}
\usepackage{cleveref}
\usepackage{tikz}
\usetikzlibrary{patterns}

\renewcommand\Re{{\operatorname{Re}}}
\renewcommand\Im{{\operatorname{\mathfrak{Im}}}}

\newcommand\R{{\mathbb{R}}}

\renewcommand\P{{\mathbb{P}}}
\newcommand\E{{\mathbb{E}}}

\newcommand\Var{{\operatorname{Var}}}

\newcommand\dist{{\operatorname{dist}}}
\newcommand\Z{{\mathbb{Z}}}

\newcommand\la{\lambda}
\newcommand\1{\mathbf{1}}

\newcommand\By{{\mathbf y}}

\newcommand\BN{{\mathbf N}}

\newcommand\CB{{\mathcal B}}
\newcommand\CC{{\mathcal C}}

\newcommand\CE{{\mathcal E}}

\renewcommand \small {\scriptsize}

\newcommand\eps{\varepsilon}

\newcommand\bs{\backslash}

\newcommand\Cov{{\operatorname{Cov}}}

\renewcommand\1{\mathbf 1}

\newcommand\til{\widetilde}

\newcommand\nc\newcommand
\nc\dmo\DeclareMathOperator
\renewcommand\bs\boldsymbol
\nc\tP{{\til P}}
\nc{\xxi}{\xi}
\nc{\ii}{{\sqrt{-1}}}
\dmo{\Leb}{{Leb}}
\nc{\wt}{\widetilde}
\nc{\mLeb}{m_{\Leb}}
\nc{\tran}{{\mathsf{T}}}
\nc{\mom}{{m}}
\nc{\avec}{{\bs a}}
\nc{\bvec}{{\bs b}}
\nc{\uvec}{{\bs u}}
\nc{\vvec}{{\bs v}}
\nc{\wvec}{{\bs w}}
\nc{\ul}{\underline}
\nc{\us}{{\bs{s}}}
\nc{\bbs}{{\bs{s}}}

\renewcommand\wt{\widetilde}

\newcommand\bxi{\boldsymbol{\xi}}

\theoremstyle{plain}

\newtheorem{theorem}{Theorem}[section]
\newtheorem{conjecture}[theorem]{Conjecture}

\newtheorem{proposition}[theorem]{Proposition}

\newtheorem{lemma}[theorem]{Lemma}
\newtheorem{corollary}[theorem]{Corollary}

\theoremstyle{definition}

\theoremstyle{remark}

\usepackage{xcolor}
\newif\ifshowedits
\showeditsfalse     

\ifshowedits
    \newcommand{\add}[1]{\textcolor{blue}{#1}}
    \newcommand{\del}[1]{\textcolor{red}{\sout{#1}}}
    
    \newcommand{\note}[1]{\textcolor{purple}{\textbf{[Note: #1]}}}
\else
    \newcommand{\add}[1]{#1}
    \newcommand{\del}[1]{}
    
    \newcommand{\note}[1]{}
\fi

\ifshowedits
    \newenvironment{addblock}{\color{blue}}{}
\else
    
\fi

\begin{document}

\title[CLT for Weyl zeros with general coefficients]{Central limit theorem for real zeros of random Weyl polynomials with general coefficients}

\author{Ander Aguirre}

\address{Department of Mathematics\\ University of Wisconsin-Madison\\ 480 Lincoln Dr\\ Madison, WI 53706 USA}
\email{aguirrezarat@wisc.edu}

\author{Hoi H. Nguyen}

\address{Department of Mathematics\\ The Ohio State University \\ 231 W 18th Ave \\ Columbus, OH 43210 USA}
\email{nguyen.1261@osu.edu}
\thanks{H. Nguyen is supported by Simons Travel Grant TSM-00013318.}

\begin{abstract}

For a random polynomial, the number of real zeros \(N_{\mathbb R}\) is a highly nonlinear function of its coefficients, and its statistical properties have been studied extensively. One of the most natural and widely investigated questions is whether \(N_{\mathbb R}\) satisfies a central limit theorem. For various ensembles with iid standard Gaussian coefficients, such central limit theorems have been established in a substantial body of work; see, for instance, \cite{AnL,AADL,ADL,AL,Dal,DNNP,DV,NS}. These results rely on a rich range of tools, including Kac--Rice formulas, moment methods, and Wiener chaos decompositions. In the non-Gaussian setting, however, many of these tools are unavailable. To the best of our knowledge, prior central limit theorems beyond the Gaussian setting were limited to Kac-type polynomials, including hyperbolic polynomials; see the works of Maslova \cite{Mas}, O. Nguyen and Vu \cite{ONgV-CLT}, and, more recently, Do, N. Nguyen, and O'Rourke \cite{DoNgOr}.

In this paper, we prove a central limit theorem for the total number of real zeros of Weyl polynomials whose coefficients are iid copies of a symmetric, mean-zero, variance-one subgaussian random variable \(\xi\). This substantially extends one of the main results of Do and Vu \cite{DV} to a broad class of non-Gaussian distributions, including the Rademacher distribution. Without the symmetry assumption, we prove central limit theorems for the number of real zeros for positive bulk intervals, as well as for \([0,\infty)\). Our proof combines the uniform one-point anti-concentration estimates from our recent work \cite{ANgW-var} with the localization of Weyl polynomials around the coefficient index \(i\approx x^2\). While our proofs use comparison to compute the variances, the CLT deduction is rather direct.



\end{abstract}

\maketitle


\section{Introduction}

\subsection{Introduction}
Over the past several decades, there has been substantial interest in the study of the real zeros of random polynomials and random functions.  A general random polynomial takes the form
\begin{equation}\label{eqn:f:general}
F_n(x)=\sum_{j=0}^n \xi_j p_j(x),
\end{equation}
where \(\xi_j\) are iid copies of a random variable \(\xi\) with mean zero and variance one, and \(p_j(x)\) are deterministic polynomials of degree \(j\). Different choices of the basis \(\{p_j\}\) lead to a variety of important ensembles.

Some of the most classical examples are:
\begin{enumerate}[(i)]
\item {\it Kac polynomials}: \(p_j(x)=x^j\);

\item {\it Hyperbolic polynomials}: $p_j(x) = \sqrt{\frac{L(L+1)\dots (L+j-1)}{j!}}x^j$ for a given $L>0$; 

\item {\it Trigonometric polynomials}: \(p_j(x)=\cos(jx)\), \(p_j(x)=\sin(jx)\), or combinations thereof;

\item {\it Orthogonal polynomial ensembles}: \(\{p_j\}_{j=0}^n\) forms an orthonormal basis with respect to a smooth Borel measure \(\mu\) on \(\R\);

\item {\it Elliptic polynomials}:
\(
p_j(x)=\sqrt{\binom{n}{j}}\,x^j;
\)

\item {\it Weyl polynomials}:
\(
p_j(x)=\frac{1}{\sqrt{j!}}x^j.
\)
\end{enumerate}

Except for the orthogonal polynomial setting (including trigonometric polynomials), all of the above ensembles can be written in the form
\[
F_n=\sum_{j=0}^n a_j\xi_j x^j
\]
for suitable deterministic coefficients \(a_j\).

The zeros and critical points of random functions arise naturally in many areas of mathematics and physics, and have been studied extensively from both probabilistic and analytic perspectives. One of the most important choices for the coefficients \(\xi_j\) is the standard Gaussian distribution, in which case many quantities associated with the zeros admit explicit analytic descriptions.

A fundamental tool in the Gaussian setting is the celebrated Kac--Rice formula. For a Gaussian polynomial \(F_n\), the expected number of real zeros in an interval \(I\subset \R\) can be expressed as
\[
\E N_I=\int_I \rho_1(x)\,dx,
\]
where the first intensity function \(\rho_1\) is given by
\[
\rho_1(x)
=
\frac{1}{\pi}
\sqrt{
\frac{\partial^2}{\partial s\partial t}
\log K(s,t)\Big|_{s=t=x}
},
\]
and
\[
K(s,t)=\E F_n(s)F_n(t)
\]
is the covariance kernel of the process.

More generally, for each \(1\le k\le n\), let
\[
\rho_{k}(x_1,\dots,x_k)
\]
denote the \(k\)-point correlation function of the real zeros of \(F\); see for instance \cite{HKPV}. These functions are characterized by the identity
\[
\E\Bigg[
\sum
\varphi(\zeta_{i_1},\dots,\zeta_{i_k})
\Bigg]
=
\int_{\R^k}
\varphi(x_1,\dots,x_k)
\rho_k(x_1,\dots,x_k)
\,dx_1\cdots dx_k,
\]
valid for every continuous compactly supported test function
\[
\varphi:\R^k\to\R,
\]
where the sum runs over all ordered \(k\)-tuples of distinct real zeros
\(
(\zeta_{i_1},\dots,\zeta_{i_k})
\)
of \(F\).

Heuristically,
\[
\rho_k(x_1,\dots,x_k)
=
\lim_{\eps\to0}
\frac{
\P\big(
\exists \zeta_1\in I_1,\dots,
\exists \zeta_k\in I_k
\big)
}
{\eps^k},
\]
where
\(
I_j=[x_j-\eps/2,x_j+\eps/2].
\)

In principle, the Kac--Rice formula allows one to compute these correlation functions explicitly. More precisely \footnote{Here the subscript $G$ in $\rho_{k,G}$ is used to emphasize that we are working with polynomials of iid standard Gaussian coefficients.},
\[
\rho_{k,G}(x_1,\dots,x_k)
=
\int_{\R^k}
|y_1\cdots y_k|
\,p(\mathbf{0},\By)
\,dy_1\cdots dy_k,
\]
where \(p\) denotes the joint density of the Gaussian vector
\[
(F_n(x_1),\dots,F_n(x_k),
F_n'(x_1),\dots,F_n'(x_k)).
\]
See for instance \cite{BD} for explicit computations in several classical ensembles.

Beyond exact formulas in the Gaussian setting, another central theme in the theory of random polynomials is universality with respect to the distribution of the coefficients. At the global scale, universality phenomena were established by Kabluchko and Zaporozhets \cite{KZ1}. At the local scale, important advances were made by Tao and Vu \cite{TV}, Do--Nguyen--Vu \cite{DONgV1,DONgV2}, Nguyen--Vu \cite{ONgV}, and more recently in \cite{DLNNP,MY}. Collectively, these works imply the following general principle.

\begin{theorem}[Local universality of correlations]\label{theorem:universality}
Assume that the coefficients \(\xi_j\) are iid copies of a random variable \(\xi\) with mean zero, variance one, and bounded \((2+\eps)\)-moment. Then the local correlation functions of the real zeros of the classical ensembles --- including Kac, Weyl, elliptic, and a broad class of orthogonal polynomial ensembles --- are asymptotically the same as in the Gaussian case. In other words
$$\int \varphi (x_{1},\dots, x_{k}) \rho_{k,\bxi}(x_1,\dots, x_k) dx_{1}\dots d x_{k}=(1+o(1))\int \varphi (x_{1},\dots, x_{k}) \rho_{k,G}(x_1,\dots, x_k) dx_{1} \dots dx_{k},$$


for any nice test function $\varphi$, where in some cases $o(1)=n^{-c}$ for some small constant $c$.
\end{theorem}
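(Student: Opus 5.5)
This statement summarizes results from \cite{TV,DONgV1,DONgV2,ONgV,DLNNP,MY}, so the natural plan is the Tao--Vu replacement (Lindeberg swapping) scheme for local statistics of zeros, adapted to the real line. First, reduce to a local problem. After rescaling $x_j$ by the natural local scale of the ensemble (for Weyl polynomials the zero spacing is $O(1)$, and for Kac polynomials near $\pm1$ it is $\approx 1-|x|$), the test function $\varphi$ is supported in a ball $B$ of unit size. The quantity $\int\varphi\rho_k$ equals $\E\sum\varphi(\zeta_{i_1},\dots,\zeta_{i_k})$, so it suffices to show
\[
\E\, G\big(N_{I_1},\dots\big)=\E_{\mathrm{Gauss}}\, G\big(N_{I_1},\dots\big)+O(n^{-c})
\]
for smooth statistics $G$ of the zeros in $B$. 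We then approximate counts of real zeros by complex-analytic quantities. For real zeros we use the standard trick of pairing real zeros with complex zeros in a thin strip and applying the Jensen/Green formula. This expresses $\sum\varphi(\zeta)$ as $\int \log|F_n(z)|\,\Delta\tilde\varphi(z)\,dz$ plus an error, where $\tilde\varphi$ is a smooth extension of $\varphi$. The error comes from zeros that are near the real axis but not real, and it is controlled by a repulsion estimate for Gaussian-like ensembles.

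The second step is the swapping argument. We compare $\E H(\log|F_n(z_1)|,\dots,\log|F_n(z_m)|)$ for the $\xi$-ensemble and the Gaussian ensemble by replacing the coefficients one at a time. A Taylor expansion to third order, together with matching of the first two moments and the bounded $(2+\eps)$ moment, gives a per-swap error of $O(|a_jz^j|^{2+\eps}/V(z)^{1+\eps/2})$, where $V(z)=\sum_j|a_j|^2|z|^{2j}$ is the variance. Because of the localization of the ensemble (for Weyl polynomials, only indices $j\approx |z|^2$ within a window of width $\sqrt{|z|^2}$ contribute), these errors sum to $O(n^{-c})$ or $o(1)$, depending on the localization width.

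The main obstacle is that $\log|\cdot|$ is singular, so the swapping requires \emph{anti-concentration}: $\P(|F_n(z)|\le e^{-n^{c}}\sqrt{V(z)})$ must be small for $z$ near the real axis. This has to hold uniformly across the intermediate (hybrid) polynomials, where the coefficients need not be continuous (for example, Rademacher). I would try to obtain it from a small-ball estimate via Halász/Erdős--Littlewood--Offord inequalities applied to the vector $(a_jz^j)$ restricted to its dominant window. There one needs arithmetic non-structure of $(a_jx^j/\sqrt{V})$, which can fail only on an exceptional set of $x$ that is negligible after integrating against $\Delta\tilde\varphi$. One also needs a log-integrability/Jensen bound for the complex zeros near the real line, to handle the non-real-root error. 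With anti-concentration in hand, one truncates $\log$ at level $-n^{c}$ and runs the swapping argument above. This yields the comparison with rate $n^{-c}$ in the localized cases (Weyl, elliptic), and with rate $o(1)$ otherwise.
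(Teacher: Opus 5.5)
The paper gives no proof of Theorem~\ref{theorem:universality}. It is quoted as a consequence of \cite{TV,DONgV1,DONgV2,ONgV,DLNNP,MY}. Your outline (Green's formula, sampling $\log|F_n|$ at finitely many points, Lindeberg swapping, anti-concentration plus log-integrability, and an extra argument for real zeros) is the replacement strategy of those works.

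As written, however, the central swapping step does not close. Your per-swap bound $O(|a_jz^j|^{2+\eps}/V(z)^{1+\eps/2})$ is valid only if the function being Taylor-expanded, $w\mapsto H(\log|w|)$ after truncation, has $\ell$-th derivatives of size $V^{-\ell/2}$ up to small polynomial factors. If you truncate at level $-n^{c}$, i.e.\ at $|w|=e^{-n^{c}}\sqrt V$, the derivatives are as large as $e^{\ell n^{c}}V^{-\ell/2}$. The accumulated remainder is then $e^{(2+\eps)n^{c}}\sum_j|a_jz^j|^{2+\eps}V^{-1-\eps/2}$. Delocalization buys only a polynomial saving here: for Weyl, $\max_j|a_jz^j|^2/V\asymp|z|^{-1}$, so the sum is $O(|z|^{-\eps/2})$, which cannot absorb the exponential factor. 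The truncation must therefore be at a polynomial level, $|w|\ge n^{-\kappa}\sqrt V$, with $\kappa$ and the number of sample points small compared with the delocalization saving. An exponential-scale threshold cannot be fed into a Taylor expansion.

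Once the scales are separated, the argument works and is simpler than what you propose.
\begin{itemize}
\item The smoothly truncated test function is globally smooth with derivatives $n^{O(\kappa)}$. So Lindeberg needs no anti-concentration for the hybrid polynomials at all, contrary to your claim.
\item Anti-concentration is needed only for the two endpoint ensembles, and at two different scales.
\item At the polynomial scale, $\P(|F(z)|\le n^{-\kappa}\sqrt V)=O(n^{-c})$ justifies the truncation at the sample points. It follows from Berry--Esseen for delocalized sums, or by swapping a smooth bump $\psi(n^{\kappa}w/\sqrt V)$ against the Gaussian.
\item At the exponential scale, $\P(|F(z)|\le e^{-n^{c}})\le n^{-A}$ is elementary, e.g.\ via a lacunary subfamily of coefficients. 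It is used only through Jensen's formula, to show $\int_B|\log|F||^2\le n^{O(c)}$ with high probability. Cauchy--Schwarz then controls the truncation and Monte Carlo errors.
\end{itemize}
Halász-type arithmetic bounds and exceptional sets of $x$ are not needed here. They are what the present paper needs for its much stronger uniform estimate in Corollary \ref{thm:smallball:inf}.

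A secondary gap concerns the real zeros. The repulsion bound for non-real zeros within $\delta$ of $\R$ must be proved for the $\xi$-ensemble itself, where Kac--Rice is unavailable; ``Gaussian-like'' is not an argument. The standard route is to establish complex universality first. One then applies it to a test function detecting at least two zeros in a $\delta$-disk centered on $\R$ (a conjugate pair produces two), which transfers the Gaussian bound with error $\delta^{-O(1)}n^{-c}$. Here $\delta$ must be a small negative power of $n$, to balance the $\delta^{-2}$ size of $\Delta\tilde\varphi$ in your thin-strip Green formula.
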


While these universality results have led to a rather satisfactory understanding of local zero statistics, many important questions remain open. One particularly active direction concerns the fluctuation of the number of real zeros,
\( N_\R\) (or more formally $N_{\R,n}(F_n)$) of $F_n$,
especially the validity of central limit theorems and related asymptotic laws. More precisely, it is natural to conjecture that CLT fluctuation holds for general $\xi$ (see for instance the comments after \cite[Theorem 1.1]{ONgV-CLT}). 
 
\begin{conjecture}\label{conj:CLT} For all models of random polynomials considered above, under the assumption that $\xi$ is subgaussian \footnote{Perhaps the conjecture continues to hold even when $\xi$ has bounded $(2+\eps)$-moment for some given $\eps>0$.} and has mean zero and variance one, we have
$$\frac{N_\R -\E N_\R}{\sqrt{\Var N_\R}} \xrightarrow{d} \BN(0,1).$$
\end{conjecture}

\add{In this paper, we study the Weyl
ensemble.} The number of real zeros of Weyl polynomials has been extensively studied in the literature (see for instance \cite{TV} and the references therein). In particular, when the coefficients are independent standard real Gaussian random variables, it is known that
\[
\E N_\R
=
\left(\frac{2}{\pi}+o(1)\right)\sqrt n,
\qquad
\Var(N_\R)
=
(2K+o(1))\sqrt n,
\]
where \(K>0\) is an explicit constant. 

\add{Meanwhile, the Gaussian CLT for $N_\R$ was first established by Do and Vu \cite[Theorem 4]{DV}}.

\begin{theorem}\label{thm:DV}
Assume that the coefficients \(\xi_i\) are independent standard real Gaussian random variables. Then, as \(n\to\infty\),
\[
\frac{N_\R-\E N_\R}{\sqrt{\Var(N_\R)}}
\xrightarrow{d}
\BN(0,1).
\]
\end{theorem}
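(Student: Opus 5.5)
We prove the Gaussian CLT for the Weyl ensemble by exploiting its stationarity-type structure. For Gaussian coefficients, the normalized process \(g_n(x)=e^{-x^2/2}F_n(x)\) has covariance
\[
\E g_n(s)g_n(t)=e^{-(s-t)^2/2}\,\frac{\sum_{j\le n}(st)^j/j!}{e^{(s^2+t^2)/2}\cdot e^{-st}\,e^{st}}.
\]
For \(|s|,|t|\le \sqrt n-\omega_n\) with \(\omega_n\to\infty\) slowly, this is \(e^{-(s-t)^2/2}(1+o(1))\), i.e.\ it is asymptotically the stationary Gaussian process with Gaussian covariance. The zeros of \(F_n\) coincide with those of \(g_n\).

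First, we reduce to the bulk. By symmetry \(x\mapsto -x\) it suffices to handle \([0,\infty)\) jointly with \((-\infty,0]\). The edge windows \(\big||x|-\sqrt n\big|\le \omega_n\) and the outer region \(|x|>\sqrt n+\omega_n\) contribute \(O(\omega_n)\) zeros in \(L^2\); this follows from Kac--Rice bounds on the first two moments. Since the variance is of order \(\sqrt n\), these parts are negligible after normalization.

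Second, in the bulk we prove a CLT for the number of zeros of the stationary-like process on an interval of length \(\asymp\sqrt n\). We use the Wiener chaos route. We replace \(N_I\) by a smoothed Kac count \(N_I^\eps=\int_I \delta_\eps(g_n)|g_n'|\,dx\), whose \(L^2\) distance to \(N_I\) is controlled via the two-point function \(\rho_2\) near the diagonal. We expand this count in Hermite chaoses of the underlying white-noise representation. Because the covariance decays like \(e^{-(s-t)^2/2}\), which is integrable in every power, each chaos component has variance linear in \(|I|\), and the fourth-moment / Peccati--Tudor theorem gives joint Gaussianity of finitely many chaoses. The tail of the chaos expansion is uniformly small relative to \(|I|\).

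An alternative, more elementary route uses \(m\)-dependence. We first truncate the covariance to a finite-range one by convolving the spectral density, which changes the count by \(o(\sqrt n)\) in \(L^2\). We then apply a Bernstein blocks argument: big blocks of length \(L\) are separated by small blocks of length \(\ell\), and the counts on big blocks are independent. Lyapunov's condition holds because of the uniform moment bound \(\E N_J^3=O(|J|^3)\) for unit intervals.

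The main obstacle is the uniformity of the approximation \(K_n\approx\) stationary kernel together with its derivatives near the edge \(|x|\approx\sqrt n\). This is needed both for the chaos variance computation, namely that \(\Var N_\R\sim 2K\sqrt n\) with \(K>0\), and for nondegeneracy. Positivity of \(K\) is established through the second-chaos contribution, which is strictly positive for the Gaussian covariance.
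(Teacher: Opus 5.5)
The paper does not prove Theorem~\ref{thm:DV}. It quotes it from Do and Vu \cite{DV}, whose argument (summarized in the introduction) compares the truncated polynomial with the infinite Gaussian analytic function and then runs a cumulant method driven by clustering bounds for the $k$-point correlation functions.

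Your main route is genuinely different. It is the Wiener-chaos method of \cite{ADL,AL}, applied to an approximately stationary process: a Kac counter, a Hermite expansion, the fourth-moment theorem chaos by chaos, and positivity from the second chaos. That second-chaos variance is a positive multiple of $\int f(\omega)^2(1-\omega^2)^2\,d\omega$, where $f$ is the Gaussian spectral density, so it is indeed positive. This route produces the variance constant and its positivity intrinsically, whereas this paper's moment arguments take the variance as an input from \cite{DV} (and \cite{ANgW-var}). The price is that the genuinely hard step, uniform smallness of the high-order chaos tail near the diagonal, is only asserted. That step, not edge uniformity, is where the work lies.

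Your alternative route is the one closest to this paper's own argument for general coefficients (finite-range dependence plus block moments). As written, however, its crux is a one-line claim, and two things are wrong with it:
\begin{enumerate}[(i)]
\item Zero counts are not continuous in the process, so closeness of covariances does not show that the finite-range truncation changes the count negligibly. The required scale is $\E(N-N^{(m)})^2=o(\sqrt n)$, i.e.\ an $L^2$ error $o(n^{1/4})$, not $o(\sqrt n)$.
\item It presupposes stationarity, so $g_n$ must first be coupled to the infinite GAF.
\end{enumerate}
The paper avoids both issues by truncating in the coefficient index rather than in the spectrum. Keeping only $|i-x^2|\lesssim x\sqrt{\log n}$ (Lemma~\ref{lemma:radial-localization}) is a $C^1$ perturbation of size $n^{-B}$. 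Uniform nondegeneracy of $(P_n,P_n')$, which is immediate for Gaussian coefficients, together with Lemma~\ref{lemma:zero-homotopy}, then leaves the block counts unchanged off an event of probability $O(n^{-A})$. Nonadjacent blocks are therefore exactly independent, and no $L^2$ approximation of zero counts is needed.

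Several details also need correcting. Your displayed covariance carries a spurious factor $e^{-(s-t)^2/2}$; the correct identity is
\[
\E g_n(s)g_n(t)=e^{-(s^2+t^2)/2}\sum_{j\le n}\frac{(st)^j}{j!}=e^{-(s-t)^2/2}\,e^{-st}\sum_{j\le n}\frac{(st)^j}{j!}.
\]
For $st\ge0$ the last factor is $\P(\operatorname{Pois}(st)\le n)=1-o(1)$ in the bulk. For $st<0$ it is not $1+o(1)$: when $s=-t$ with $s^2$ close to $n$, the alternating partial sum is dominated by its first omitted term, which is far larger than $e^{-s^2}$. What holds uniformly is the additive bound
\[
\bigl|\E g_n(s)g_n(t)-e^{-(s-t)^2/2}\bigr|\le e^{-(|s|-|t|)^2/2}\,\P\bigl(\operatorname{Pois}(|st|)>n\bigr),
\]
which is small and integrable over the bulk. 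Its opposite-sign part, concentrated near $s\approx -t$ close to the edge, is exactly the dependence between the two half-lines created by the truncation. This bound, and its analogues for derivatives, is what your chaos estimates must use.

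Two smaller points. The region $|x|>\sqrt n+\omega_n$ carries of order $\log n$ zeros on average, not $O(\omega_n)$: the Kac--Rice density there is about $(2\pi(|x|-\sqrt n))^{-1}$ for $1\ll |x|-\sqrt n\ll\sqrt n$. This is harmless, since only $o(n^{1/4})$ in $L^2$ is needed. Finally, for unit intervals the Lyapunov input should read $\E N_J^3=O(1)$, not $O(|J|^3)$.
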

Their argument also applies to smooth linear statistics of the real zeros, see \cite[Theorem 5]{DV}. The purpose of the present paper is to establish a non-Gaussian central limit theorem on the positive bulk intervals for which sharp variance asymptotics are available \add{due to \cite{ANgW-var}}. 

\subsection{Our contributions} We will first work with Weyl ensembles where the random coefficients are quite general. Our first main result reads as follows.

\begin{theorem}[main result, CLT on positive bulk intervals]\label{thm:main} Assume that \(\xi\) is subgaussian random variable with mean zero and variance one.
Fix constants \(0<c_1<c_2\) and \(0<\sigma_*<1/2\).  Let $n^{\sigma_*}\le M \le (c_2+1)^{-1} \sqrt{n}$ be a parameter and consider the interval
\[
 I_W=[c_1M,c_2M]\subset [0,\sqrt n-M].
\]
Let \(N_{I_W}\) denote the number of real zeros in \(I_W\) of the Weyl polynomial whose coefficients are iid copies of \(\xi\). Then \[\Var(N_{I_W})\asymp M\] 
and, as $n \to \infty$
\[
\frac{N_{I_W}-\E N_{I_W}}{\sqrt{\Var(N_{I_W})}}
\xrightarrow{d}
\BN(0,1).
\]
\end{theorem}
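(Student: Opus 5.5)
The plan is to split $I_W$ into many short blocks, approximate each block count by a function of a nearly disjoint window of coefficients, and then apply a CLT for sums of weakly dependent variables. The localization fact for the Weyl ensemble is that, for $x\in I_W$, the terms $\xi_j x^j/\sqrt{j!}$ are dominated by indices $j$ with $|j-x^2|\lesssim x\sqrt{\log n}$; since $x\le c_2M\le \sqrt n - M$, this window lies inside $[0,n]$ with room to spare. Concretely, I would fix a slowly growing block length $L$ (for instance $L=(\log n)^{C}$) and cover $I_W$ by consecutive intervals $J_1,\dots,J_m$ of length $L$, so $m\asymp M/L$. For $x\in J_k$, after normalizing by $e^{x^2/2}$, I would replace $F_n$ by the truncation $F^{(k)}(x)=\sum_{j\in W_k}\xi_j x^j/\sqrt{j!}$, where $W_k=\{j:|j-x_k^2|\le C x_k\sqrt{\log n}\}$ and $x_k$ is the left endpoint of $J_k$. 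Because adjacent $x_k$ differ by $L$, the centers $x_k^2$ differ by about $2x_kL$, which exceeds the window width $Cx_k\sqrt{\log n}$ once $L\gg\sqrt{\log n}$. Hence the $W_k$ overlap only for $|k-k'|\le O(1)$, and $N_{J_k}(F^{(k)})$ and $N_{J_{k'}}(F^{(k')})$ are independent once $|k-k'|$ exceeds an absolute constant.

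The approximation step replaces $N_{I_W}$ by $S=\sum_k N_{J_k}(F^{(k)})$. The normalized difference $F-F^{(k)}$ on $J_k$ is at most $n^{-10}$ with overwhelming probability, by subgaussian tails and the Gaussian decay of $x^{2j}e^{-x^2}/j!$ away from $j\approx x^2$, and the same holds for derivatives. To show that a tiny perturbation rarely changes the zero count, I would use the uniform one-point anti-concentration estimates of \cite{ANgW-var}: with probability $1-O(n^{-c})$ per unit interval, $|F|$ and $|F'|$ are not simultaneously smaller than $n^{-c'}$, so every zero is simple and stable. Zeros near block boundaries also need care, and I would handle them by the same anti-concentration, noting there are only $m$ boundaries. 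Together this gives $\E|N_{I_W}-S|=o(\sqrt M)$, provided the exceptional probabilities are polynomially small in $n$ and hence in $M\ge n^{\sigma_*}$. A crude moment bound such as $\E N_{J}^2=O(L^2)$, from Jensen's formula plus subgaussian tails, controls the bad events.

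For the CLT I would use the variance asymptotic $\Var(N_{I_W})\asymp M$, taken from \cite{ANgW-var} through comparison with the Gaussian case. Since $\E|N_{I_W}-S|=o(\sqrt M)$ is only an $L^1$ bound, which does not control $\Var(N_{I_W}-S)$, I would aim to also prove $\E|N_{I_W}-S|^2=o(M)$ using the $L^2$ moment bounds on the exceptional events, and in that case $\Var S\asymp M$ as well. $S$ is a sum of $m$ $O(1)$-dependent variables, each with all moments bounded by $\mathrm{poly}(L)$. I would use the Hoeffding--Robbins CLT for $m$-dependent sequences, or a Stein-method bound: the Lyapunov-type error is about $m\,\mathrm{poly}(L)/(\Var S)^{3/2}\asymp \mathrm{poly}(L)/\sqrt M\to0$. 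Slutsky's lemma then transfers the CLT to $N_{I_W}$.

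The main obstacle is the variance lower bound $\Var(N_{I_W})\gtrsim M$ in the non-Gaussian case, together with making the approximation error $o(\sqrt M)$ in $L^2$, which requires exceptional probabilities that are uniform over the $m$ blocks. If the comparison in \cite{ANgW-var} gives the variance only up to an additive $o(M)$ error, I would instead bound $\Var S$ below directly. This would use the near-independence of far blocks, $\Var S\approx\sum_k \Var N_{J_k}+\sum_{|k-k'|\le C}\Cov$, with the local universality (Theorem \ref{theorem:universality}) applied to two-point correlations to match these terms with the Gaussian ones. The Gaussian computation of \cite{DV} would then supply the positive constant.
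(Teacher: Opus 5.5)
Your argument is correct in outline, and it uses the same core inputs as the paper. These are: localization of the Weyl coefficients at scale $x\sqrt{\log n}$; the uniform non-degeneracy of $(P_n,P_n')$ on $I_W$ and at block endpoints from \cite{ANgW-var} (Corollary \ref{thm:smallball:inf}), which shows truncation changes no block count outside an event of arbitrarily small polynomial probability; Jensen-type local moment bounds; and the variance $\Var(N_{I_W})\asymp M$ imported from \cite{ANgW-var} together with the Gaussian asymptotic. For the variance, an additive $o(M)$ comparison error already suffices because the Gaussian variance is of order $M$, so your fallback is not needed.

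The difference is in the final step. The paper keeps the original counts and proves approximate factorization of mixed moments across blocks at distance at least two (Proposition \ref{prop:radial-independence}). It then runs the method of moments through a cluster expansion, with blocks of length $h=M^\gamma$ and $\gamma$ depending on the moment order. You instead replace $N_{I_W}$ by the localized sum $S$, which is exactly $1$-dependent. You note that $\|N_{I_W}-S\|_2=o(1)$, since the two agree with overwhelming probability and are polynomially bounded, and then apply a CLT for locally dependent variables. Your summands form a triangular array with polylogarithmically growing moments, so the right tool is the Stein-method bound for dependency graphs (Baldi--Rinott, Chen--Shao), not the classical Hoeffding--Robbins theorem. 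Its error terms $\sum_k\E|X_k|^3/(\Var S)^{3/2}$ and $(\sum_k\E X_k^4)^{1/2}/\Var S$ are both at most $\mathrm{polylog}(n)/\sqrt M\to0$.

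What each route buys: yours gives a quantitative Berry--Esseen-type rate and avoids the cluster combinatorics and the $k$-dependent choice of $\gamma$. The paper's route is self-contained, yields convergence of all moments, and its moment factorization is reused verbatim for the expanding bulk and the reflected blocks.

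One slip needs fixing. You center $W_k$ at $x_k^2$ with half-width $Cx_k\sqrt{\log n}$. But for $x\in J_k$ the coefficient mass sits near $x^2\in[x_k^2,\,x_k^2+2x_kL+L^2]$, and $x_kL\gg x_k\sqrt{\log n}$. With that definition the bound $|F-F^{(k)}|\le n^{-10}$ therefore fails on most of $J_k$. Take $W_k$ to be the $Cx_k\sqrt{\log n}$-neighborhood of the whole squared block $\{x^2:x\in J_k\}$, as in the paper's $W_j$. Nonadjacent windows then remain disjoint, because their squared blocks are about $2x_kL$ apart, and this is what gives the $1$-dependence.

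You should also tune the truncation error below the non-degeneracy threshold $n^{-B}$. Here $B$ must be large enough that the exceptional probability is an arbitrarily high power of $n^{-1}$. You achieve this by enlarging the constant in the window width, exactly as in the paper.
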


We are also able to extend the proof to the extended interval $[0,\sqrt{n}]$.

\begin{theorem}[main result, CLT on the extended positive interval]
\label{thm:full-positive-conditional}
Assume that the coefficients of the Weyl polynomial are iid copies of a subgaussian random variable $\xi$ with mean zero and variance one, and let
\[
N_n^+:=N_{[0,\sqrt n]}.
\]
Then we have
\[
\frac{N_n^+-\E N_n^+}
{\sqrt{\Var(N_n^+)}}
\xrightarrow{d}\BN(0,1).
\]
\end{theorem}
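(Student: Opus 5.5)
The plan is to reduce Theorem~\ref{thm:full-positive-conditional} to the bulk statement, Theorem~\ref{thm:main}, by decomposing $[0,\sqrt n]$ into pieces and showing that the pieces that are not covered by the bulk theorem contribute negligibly to the fluctuations. Concretely, fix a small $\delta>0$ and split
\[
[0,\sqrt n]=[0,n^{\sigma_*}]\cup[n^{\sigma_*},\sqrt n-\delta\sqrt n]\cup[\sqrt n-\delta\sqrt n,\sqrt n].
\]
The middle range is covered by dyadic intervals $[2^kM_0,2^{k+1}M_0]$ of the type $I_W$. We will not apply Theorem~\ref{thm:main} interval by interval and then add; that would need joint Gaussianity. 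Instead we rerun the argument behind Theorem~\ref{thm:main} directly on the middle range. Because of the localization of Weyl polynomials around the coefficient index $i\approx x^2$, the zero count on a window of length $O(1)$ near $x$ is essentially a function of the coefficients $\xi_i$ with $|i-x^2|\lesssim x\sqrt{\log n}$.

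The core step is a blocking (Bernstein big-block/small-block) argument. Cut the middle range into blocks $J_\ell$ of length $L\to\infty$, separated by gaps of length $L'=\sqrt{C\log n}\ll L$. After truncating each block count to depend only on its local coefficients, the counts on different big blocks are functions of disjoint sets of $\xi_i$, hence independent. The errors from the truncation and from the small blocks are controlled in $L^2$ using:
\begin{enumerate}[(a)]
\item the uniform one-point anti-concentration estimates of \cite{ANgW-var}, which rule out near-double roots and so make the truncated count differ from the true count only with tiny probability;
\item the bound $\E N_J^2\lesssim |J|^2+|J|$ on short windows, obtained from comparison with the Gaussian case via Theorem~\ref{theorem:universality}.
\end{enumerate}
Lyapunov's CLT applied to the sum of independent block counts then gives the CLT, provided that the total variance is at least a constant times the number of blocks times $L$ (the sum of block variances). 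This lower bound $\Var\asymp\sqrt n$ comes from the variance asymptotics of \cite{ANgW-var}, whose proof gives $\Var N_{I}\asymp |I|$ for every interval of the type in Theorem~\ref{thm:main}. Summing these bounds, together with the decorrelation across gaps, yields $\Var(N_n^+)\asymp\sqrt n$.

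Next we handle the two ends. On $[0,n^{\sigma_*}]$ the expected number of zeros is $O(n^{\sigma_*})$. With anti-concentration we also get $\Var N_{[0,n^{\sigma_*}]}=O(n^{\sigma_*})=o(\sqrt n)$, so this piece is negligible after normalization, by Slutsky. The edge piece $[\sqrt n-\delta\sqrt n,\sqrt n]$ has variance $O(\delta\sqrt n)$, but only if we have a variance upper bound there. The block argument still gives an upper bound near the edge: the index window for $x$ near $\sqrt n$ is one-sided (truncated at $n$), but the locality persists, so we get $\Var\lesssim\delta\sqrt n$. Letting $\delta\to0$ after $n\to\infty$, with a standard diagonal argument, gives the CLT for $N_n^+$ itself, using that the variance is $\asymp\sqrt n$ uniformly.

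The main obstacle is the edge region near $\sqrt n$, and the transition into it. There, Theorem~\ref{thm:main}'s hypothesis $I_W\subset[0,\sqrt n-M]$ fails, the density of zeros changes (the Airy/erfc-type transition at scale $O(1)$ around $\sqrt n$), and the uniform anti-concentration from \cite{ANgW-var} must be checked to hold up to $\sqrt n$. I would first try to prove only a crude second moment bound $\E N_J^2\lesssim |J|^2+|J|$ for windows $J$ near the edge, together with approximate independence across gaps. This suffices for the $O(\delta\sqrt n)$ variance bound, since only an upper bound and no asymptotics is needed there.
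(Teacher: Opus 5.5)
There is a genuine gap, and it sits exactly at the step you defer as the main obstacle.

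\textbf{Where the argument fails.} Your $\delta\to0$ scheme needs $\Var(N_{[(1-\delta)\sqrt n,\sqrt n]})\le C\delta\sqrt n$ with $C$ independent of $\delta$. You justify this by saying that locality persists near the edge. But locality is not a deterministic property of the zero count. Replacing a block count by the count of the coefficient-truncated polynomial is only legitimate on the event $\inf_x(|P_n(x)|+|P_n'(x)|)>n^{-B}$; this is what Lemma \ref{lemma:zero-homotopy} requires. So approximate independence across gaps near the edge needs uniform nondegeneracy there.

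The estimates of \cite{ANgW-var} that you quote only cover $[c_1M,c_2M]\subset[0,\sqrt n-M]$ with fixed constants. They reach distance $\delta\sqrt n$ from the edge only with $\delta$-dependent constants, and never reach distance $o(\sqrt n)$. Without them, the crude count of $n^{o(1)}$ zeros per unit disk gives only $\Var\le\delta^2n^{1+o(1)}$.

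Short-window moment bounds from local universality cannot repair this. There are $\asymp\delta^2 n$ pairs of unit windows in the edge region, and an $n^{-c_0}$ universality error per pair is far too weak. Also, your target of anti-concentration ``up to $\sqrt n$'' is neither what that method gives nor what is needed. The Diophantine step uses coefficient windows $x^2+[Lx/2,Lx]$, which leave $\{0,\dots,n\}$ once $\sqrt n-x=O(1)$.

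\textbf{The missing idea: a two-scale edge treatment.}
\begin{enumerate}
\item Extend nondegeneracy close to the edge. For $\sqrt n-x\ge M^{\varepsilon_g}$ one has $n-x^2\ge M^{\varepsilon_g}x\gg Lx$, so all index windows stay below $n$. The argument of \cite{ANgW-var} then goes through unchanged and gives uniform nondegeneracy on $B_n=[n^{\varepsilon_b},\sqrt n-n^{\varepsilon_b}]$ (Proposition \ref{prop:near-edge}).
\item Run the localization and finite-range independence argument on all of $B_n$ at once; no $\delta$-splitting is needed.
\item Get the lower bound $\Var(N_{B_n})\gg\sqrt n$ from a single macroscopic bulk interval such as $[\sqrt n/4,\sqrt n/2]$, decorrelated from the rest (Lemma \ref{lemma:expanding-bulk-var-lower}). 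Your ``summing'' of dyadic variance bounds does not by itself give a lower bound, since covariances between adjacent pieces are not controlled and may be negative.
\item Treat the two leftover strips of length $n^{\varepsilon_b}$ crudely. With overwhelming probability they contain $n^{\varepsilon_b+o(1)}$ zeros, so their $L^2$ norm is $o(n^{1/4})$ precisely because $\varepsilon_b<1/4$. Slutsky then concludes.
\end{enumerate}

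The same crude count is what you should use on $[0,n^{\sigma_*}]$, taking $\sigma_*<1/4$. Anti-concentration is unavailable for $x=O(1)$, so otherwise your claimed $\Var=O(n^{\sigma_*})$ there is unjustified.

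Once these edge inputs are in place, your Bernstein-blocking plus Lyapunov argument would be a reasonable alternative to the paper's cluster-moment expansion. You would also need a higher-moment block bound (e.g.\ via Jensen) for Lyapunov, not just the second moment in (b).
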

In fact, by using the method of Section \ref{section:full-real} to treat the outlier region $(\sqrt{n}, \infty)$ (Subsection \ref{sub:outlier}), one can easily extend Theorem \ref{thm:full-positive-conditional} to the number of real zeros on the entire positive line $[0,\infty)$.

\begin{theorem}\label{thm:full-positive}
The conclusion of Theorem \ref{thm:full-positive-conditional} also holds with
\(N_{[0,\sqrt n]}\) replaced by \(N_{[0,\infty)}\).
\end{theorem}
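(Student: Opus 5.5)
The plan is to reduce Theorem \ref{thm:full-positive} to Theorem \ref{thm:full-positive-conditional} by showing that the outlier count $N_{(\sqrt n,\infty)}$ is negligible on the scale $\sqrt{\Var(N_n^+)}$. We write
\[
N_{[0,\infty)} = N_n^+ + N_{(\sqrt n,\infty)}.
\]
Theorem \ref{thm:full-positive-conditional} already gives the CLT for $N_n^+$. From the proof of Theorem \ref{thm:main} applied with $M\asymp\sqrt n$, we also have the lower bound $\Var(N_n^+)\gtrsim \sqrt n$. By Slutsky's lemma together with the elementary inequality $|\sqrt{\Var(X+Y)}-\sqrt{\Var X}|\le \sqrt{\Var Y}$, it therefore suffices to show
\[
\E\big|N_{(\sqrt n,\infty)}-\E N_{(\sqrt n,\infty)}\big|^2 = o(\sqrt n).
\]
For this it is enough to prove $\E N_{(\sqrt n,\infty)}^2=o(\sqrt n)$.

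The key steps are as follows.

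\textbf{Step 1: shorter range.} On $[\sqrt n,\sqrt n+C\log n\cdot n^{1/4}]$ (or a window of width $o(n^{1/4})$ times a slowly growing factor), we bound the number of zeros deterministically in expectation. We use local universality (Theorem \ref{theorem:universality}) for the one- and two-point correlations. In the Gaussian case the density of real zeros near $\sqrt n$ is $O(1)$ and decays rapidly past the edge, because of the $n^{1/4}$ Airy-type edge scaling of the Weyl kernel. So the Gaussian first and second factorial moments there are $O(n^{1/4}\cdot\text{polylog})=o(\sqrt n)$, and universality transfers these bounds.

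\textbf{Step 2: far region $x\ge \sqrt n+n^{1/4+\delta}$.} Here the polynomial is dominated by its top terms. Write $F_n(x)=\frac{x^n}{\sqrt{n!}}\sum_j \xi_{n-j}\frac{\sqrt{n!}}{\sqrt{(n-j)!}}x^{-j}$. The weights $\sqrt{n!/(n-j)!}\,x^{-j}\approx (\sqrt n/x)^j e^{-j^2/(4n)}$ decay geometrically in $j$ at rate $1-\Theta(n^{-1/4+\delta})$. After the substitution $y=\sqrt n/x\in(0,1)$, this is a Kac/hyperbolic-type random series in $y$ with an effective number of terms $m\ll n$. For such series, Jensen's formula on disks yields a bound on the expected number of zeros of order $\log$ of the effective degree, as in the Kac-type outlier analysis of Section \ref{section:full-real}. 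Jensen gives that $N^2$ is controlled by $(\log\max|F|-\log|F(x_0)|)^2$. The first term is handled by subgaussian tails. The second term uses the uniform one-point anti-concentration estimates of \cite{ANgW-var}, which give $\P(|F(x_0)|\le e^{-t}\sqrt{\Var F(x_0)})\lesssim e^{-ct}+n^{-A}$, so its second moment is $O(\text{polylog}\,n)$. Covering the far region by $O(\log n)$ dyadic disks in the $y$ variable then gives $\E N^2=O(\text{polylog}\,n)=o(\sqrt n)$.

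\textbf{Step 3: combine.} Combining Steps 1 and 2 with Minkowski's inequality gives $\|N_{(\sqrt n,\infty)}\|_2=o(n^{1/4})$, and the reduction above concludes the proof.

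I expect the main obstacle to be the transition region $|x-\sqrt n|\lesssim n^{1/4+\delta}$. There universality must be applied uniformly near the spectral edge, and the second moment, not just the first, has to be controlled. If the universality statements are not available at the edge, I would instead apply Jensen's formula on disks of radius $n^{1/4}$ centered on the real axis. The growth bound $\log\max|F|\le \log\sqrt{\Var F}+O(\log n)$ would come from subgaussian union bounds, and the anti-concentration of \cite{ANgW-var} would again handle the lower bound. This gives $O(\text{polylog}\,n)$ zeros per disk in $L^2$ over $n^{\delta}$ disks, which is still $o(n^{1/4})$ for small $\delta$.
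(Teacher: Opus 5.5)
Your reduction is the same as the paper's. You split off $N_{(\sqrt n,\infty)}$, prove $\E N_{(\sqrt n,\infty)}^2=o(\sqrt n)$, and conclude with the $L^2$ triangle inequality and Slutsky. One correction: $\Var(N_n^+)\gg\sqrt n$ comes from Lemmas \ref{lemma:expanding-bulk-var-lower} and \ref{lemma:positive-strips} in the proof of Theorem \ref{thm:full-positive-conditional}, not from Theorem \ref{thm:main}, since variance is not monotone in the interval.

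The gap is in the proof of the outlier bound. Each of your steps uses an input that is not available at or beyond $\sqrt n$.
\begin{itemize}
\item Step 2 and your fallback both need one-point small-ball estimates at real points $x_0\ge\sqrt n$. \cite{ANgW-var} does not give these. Its bounds are bulk statements, and even the extension in Appendix \ref{appendix:near-edge-smallball} stops at $x\le\sqrt n-M^{\varepsilon_g}$. That is because the Diophantine argument needs the coefficient window $x^2+[Lx/2,Lx]$ to lie below the truncation index $n$. For $x\ge\sqrt n$ this window lies entirely above $n$, and $P_n(x)$ becomes a Kac-type sum $\sum_j\xi_{n-j}w_j$ governed by the top coefficients. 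Anti-concentration for that sum, with general (possibly atomic, e.g.\ Rademacher) $\xi$, is a separate problem you have not addressed.
\item Step 1 uses local universality past the edge. The quantitative universality actually available (Lemma \ref{lemma:reflected-local-comparison}, from \cite[Theorem 5.2]{TV}) is stated only for centers with $|x|\le\sqrt n$.
\end{itemize}

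The quantitative claims would also fail as written.
\begin{itemize}
\item The Weyl edge has width $O(1)$, not $n^{1/4}$: $\Var P_n(x)=\P(\mathrm{Pois}(x^2)\le n)\approx\Phi(-2(x-\sqrt n))$. For $x-\sqrt n\gg1$ the Gaussian real-zero density behaves like $\sqrt n/(\pi(x^2-n))\approx 1/(2\pi(x-\sqrt n))$. So only $O(\log n)$ real zeros are expected in $(\sqrt n,\infty)$.
\item Bounded correlation functions on a window $W$ of length $n^{1/4}\log n$ only give $\E N_W(N_W-1)=O(n^{1/2}\log^2 n)$, which is not $o(\sqrt n)$. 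If the density really were of order one on a window of length $\asymp n^{1/4}$, then $\|N_W\|_2\ge\E N_W\gtrsim n^{1/4}$, and the bound you need would be false.
\item In the fallback, Jensen's formula counts all complex zeros in the enlarged disk. Disks of radius $n^{1/4}$ covering the points just beyond $\sqrt n$ meet $B(0,\sqrt n)$ in area $\asymp\sqrt n$, so they contain $\asymp\sqrt n$ complex zeros. The Jensen bound per disk is therefore $\gtrsim\sqrt n$, not polylogarithmic.
\end{itemize}

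The missing input is the one the paper uses in Lemma \ref{lemma:exterior-zeros}, whose proof does not use symmetry. These are the local zero-count estimates \cite[Section 12, Eqs. (87)--(89)]{TV}, valid for general coefficients. With overwhelming probability they give $n^{o(1)}$ zeros in $O(1)$-radius disks at the edge and in dyadic disks of radius $\asymp s$ at distance $s$ beyond it. They also show that all but $O(n^{\eta+o(1)})$ zeros lie in $B(0,\sqrt n+n^{1/2-\eta})$. Hence $N_{(\sqrt n,\infty)}\le n^{\eta+o(1)}$ with overwhelming probability. Using $N\le n$ on the exceptional event, $\E N_{(\sqrt n,\infty)}^2=o(\sqrt n)$ for $\eta<1/4$. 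This needs neither real-axis anti-concentration nor universality at the edge.
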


We next extend our result to the whole real line under the additional assumption that \(\xi\) is symmetric, that is, \(\xi\) has the same distribution as \(-\xi\). Under this assumption, the process \(P_n(-x)\) has the same law as \(P_n(x)\), so the positive-axis anti-concentration estimates and one-sided variance analysis from \cite{ANgW-var} (to be detailed later) transfer directly to the negative axis. We note that even under symmetry, the zero counts on the positive and negative axes are not independent, since they are determined by the same coefficients. As it will be clear later, we handle this dependence by grouping each pair of reflected intervals into a single block.

\begin{theorem}[main result, CLT on the entire real line]
\label{thm:full-real}
Assume that the coefficients are iid copies of a symmetric subgaussian random
variable $\xi$ with mean zero and variance one.  Then
\begin{equation}
\operatorname{Var}(N_{\mathbb R})=(2K+o(1))\sqrt n,
\label{eq:full-real-variance}
\end{equation}
where $K$ is the Gaussian Weyl variance constant from \cite[Theorem 4]{DV},
and
\[
\frac{N_{\mathbb R}-\E N_{\mathbb R}}
{\sqrt{\Var(N_{\mathbb R})}}
\xrightarrow{d}\BN(0,1).
\]
\end{theorem}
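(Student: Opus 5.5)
We will prove Theorem \ref{thm:full-real} by combining the positive-axis analysis behind Theorems \ref{thm:main}--\ref{thm:full-positive} with the reflection symmetry $P_n(-x)\overset{d}{=}P_n(x)$. The key point is that reflected intervals share coefficients, so each pair of mirror intervals is treated as a single block.

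\textbf{Step 1: Decomposition.} Write
\[
N_{\R}=N_{[-1,1]}+N_{(1,\sqrt n]}+N_{[-\sqrt n,-1)}+N_{|x|>\sqrt n}.
\]
The near-origin term and the outlier term $N_{|x|>\sqrt n}$ will be shown to have variance $o(\sqrt n)$, using the argument of Subsection \ref{sub:outlier} and its mirror image. They are therefore negligible for both the variance and the CLT.

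Cover $(1,\sqrt n]$ by dyadic-type scales $M$. Within each scale, cut into bulk intervals $I_k$ of length $\ell$ with $1\ll \ell\ll M$, separated by short gaps of length $g$ with $1\ll g\ll \ell$. Pair each $I_k$ with its reflection $-I_k$ and set $X_k=N_{I_k}+N_{-I_k}$.

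\textbf{Step 2: Localization.} For $x\approx t$, the Weyl polynomial $P_n(x)e^{-x^2/2}$ is dominated by coefficients with index $j\in[t^2-Ct\sqrt{\log n},\,t^2+Ct\sqrt{\log n}]$. The same window governs $-x$, because $|(-x)^j|=|x|^j$. Hence $X_k$ is well approximated, in $L^2$ and with error $o(\sqrt{\ell})$ after summation, by a functional $\tilde X_k$ of a coefficient window $J_k$. The windows $J_k$ are pairwise disjoint once $g\gg \sqrt{\log n}$.

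To control the truncation error we use the uniform one-point anti-concentration estimates of \cite{ANgW-var}. These bound the probability that the truncated and full polynomials have different numbers of zeros in an interval, by estimating the probability of small values of $P_n$ and $P_n'$. By symmetry they apply on the negative axis as well. The resulting blocks $\tilde X_k$ are independent, and the gap contributions have small total variance.

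\textbf{Step 3: Variance and CLT.} For the variance, we compute $\Var(N_\R)$ through comparison with the Gaussian case via Theorem \ref{theorem:universality} applied to pair correlations. The covariance between $N_{I}$ and $N_{-I}$, and the long-range covariances, decay fast enough that the Gaussian asymptotics $(2K+o(1))\sqrt n$ from \cite{DV} transfer.

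For the CLT, we apply Lyapunov's theorem to $\sum_k(\tilde X_k-\E\tilde X_k)$. The fourth moments satisfy $\E|\tilde X_k-\E\tilde X_k|^4\lesssim \ell^2$, which follows from the bounded local correlation functions together with subgaussian tails for the number of zeros in short intervals. Since the number of blocks tends to infinity and each block variance is $\asymp \ell$, the Lyapunov ratio tends to $0$.

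\textbf{Main obstacle.} The hardest step is the variance asymptotic \eqref{eq:full-real-variance} for non-Gaussian $\xi$. It requires controlling cross terms $\Cov(N_I,N_{-I})$, which are not small a priori since both counts depend on the same coefficients. I would first try to show that, under symmetry, conditioning on the odd-index coefficients leaves the even-part fluctuations independent. This would give a decorrelation matching the Gaussian computation, where the cross term is known to be asymptotically negligible.
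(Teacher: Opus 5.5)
\paragraph{The variance step has a genuine gap.} Your architecture matches the paper's: you merge reflected intervals into one block, get two-sided anti-concentration from $P_n(-x)\stackrel{d}{=}P_n(x)$, use coefficient localization for independence of blocks with disjoint index windows, and discard the near-origin, edge and outlier zeros in $L^2$. Using Bernstein blocking with gaps and Lyapunov, instead of the paper's cluster moment expansion for $1$-dependent blocks, is a legitimate alternative. The gap is in the variance step, exactly where you place your main obstacle, and your proposed way around it fails. Conditioning on the odd-index coefficients creates no independence: conditionally, $P_n(x)=E(x)+O(x)$ and $P_n(-x)=E(x)-O(x)$ are driven by the same random even part $E$. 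Any decorrelation of $N_I$ and $N_{-I}$ could only come from approximate joint Gaussianity of $(E,O)$, i.e.\ from a universality statement.

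More to the point, no decorrelation is needed. The paper never proves $\Cov(N_I,N_{-I})$ is small. Since $I$ and $-I$ sit in the same paired block, it only compares that covariance with its Gaussian counterpart. This works because the quantitative local universality (Tao--Vu, as used in \cite[Lemma 7.8]{ANgW-var}) holds for centers with $n^{\varepsilon}\le|x_i|\le\sqrt n$ of either sign. The Gaussian cross terms, whatever they are, are already contained in the full-line Gaussian variance $(2K+o(1))\sqrt n$ of \cite{DV}.

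\paragraph{Missing bookkeeping.} Your Step 3 lacks what makes this comparison summable. The qualitative $(1+o(1))$ form of Theorem \ref{theorem:universality} gives an error $o(\ell^2)$ per block of length $\ell$, which is useless against a block variance of order $\ell$. The paper proceeds in two parts:
\begin{enumerate}[(i)]
\item Covariances between non-neighbouring paired blocks are $O(n^{-A})$ in both ensembles. This comes from the exact independence of truncated windows you already set up, not from decay of non-Gaussian correlation functions, which is not available.
\item Within neighbouring blocks, split into unit intervals $U,V$, including reflected pairs, and use the additive power-saving bound $|\E N_{U,\bxi}N_{V,\bxi}-\E N_{U,G}N_{V,G}|\le n^{-c_0}$. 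The total error is $O((\sqrt n/\ell)\,\ell^2\,n^{-c_0})$. This is $o(\sqrt n)$ only if $\ell\le n^{c_0-\varepsilon}$, a constraint your choice $1\ll\ell\ll M$ does not impose.
\end{enumerate}

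\paragraph{Two smaller points.} Your fourth-moment bound $\E|\tilde X_k-\E\tilde X_k|^4\lesssim\ell^2$ is unjustified: neither cumulant bounds nor subgaussian tails for zero counts are available non-Gaussianly. The crude Jensen bound $n^{o(1)}\ell^{8}$ of Lemma \ref{lemma:local-moments} still suffices for Lyapunov once $\ell$ is a small power of $n$, since the ratio becomes $n^{-1/2+o(1)}\ell^7$. Also, the discarded region must be $|x|\le n^{\varepsilon}$ together with $n^{\varepsilon}$-neighbourhoods of $\pm\sqrt n$, not just $[-1,1]$. Both the anti-concentration behind your localization and the universality comparison need $n^{\varepsilon}\le|x|\le\sqrt n-n^{\varepsilon}$.
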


Thus, our result applies in particular to Rademacher (Bernoulli) coefficients,
\[
\mathbb P(\xi=\pm1)=\frac12.
\]
We believe that the symmetry assumption can be removed. However, doing so would require extending the (rather lengthy) arguments of \cite{ANgW-var} to the negative axis, where the reflected polynomial has alternating coefficient laws. We leave this extension to future work.

\subsection{Literature on CLT for the Gaussian models}

In the Gaussian setting, there has been substantial recent progress on fluctuation theory and central limit theorems for random polynomials and Gaussian analytic functions; see, for instance, \cite{AnL,AADL,ADL,AL,Dal,DNNP,DV,Mas, NS,ONgV-CLT} and the references therein. The techniques used in these works vary significantly depending on the underlying ensemble.

One robust approach to fluctuation problems is the moment method developed in \cite{AnL}, where the authors study the \(k\)-th moment
\[
\E\left(\frac{N_\R-\E N_\R}{\sqrt{\Var N_\R}}\right)^k
\]
for every fixed \(k\), for various random functions \(F_n\) with smooth correlation kernel \(K(s,t)\). Their analysis relies on clustering properties of the correlation functions \(\rho_k\), a feature that will also appear later. This method has been an important contribution to the study of fluctuations.

However, perhaps the most powerful framework available in the Gaussian setting is the Wiener chaos decomposition, originated from \cite{ADL,AL}, which expresses functionals of Gaussian random variables as orthogonal expansions with respect to the Gaussian measure.

Most relevant to the present paper is the work of Do and Vu \cite{DV} on Weyl polynomials. Their approach is based on a detailed comparison of the infinite Gaussian analytic function
\(
F(x)
=
\sum_{i=0}^{\infty}\frac{\xi_i}{\sqrt{i!}}x^i.
\)
A key ingredient of their argument is to show that truncating this infinite series at degree \(n\) does not significantly affect the local statistics of the real zeros. The infinite model is then analyzed using the cumulant method, together with quantitative estimates on correlation functions.

More precisely, let
\(
\rho_k(x_1,\dots,x_k)
\)
denote the \(k\)-point correlation function of the real zeros. One of the main ingredients in \cite{DV} is a clustering property showing that correlations approximately factorize when two groups of points are sufficiently separated. If \(X_I\) and \(X_J\) are two subconfigurations satisfying
\[
d(X_I,X_J)\ge 2\Delta,
\]
then \cite[Lemma 9]{DV} gives
\[
\left|
\frac{\rho(X)}
{\rho(X_I)\rho(X_J)}
-1
\right|
\le
C_k
\exp\left(-\frac12(d-\Delta)^2\right).
\]
A related estimate, established in \cite[Lemma 11]{DV}, states that
\[
\left|
\rho(X)-\rho(X_I)\rho(X_J)
\right|
\le
C_k
\exp\left(-\frac12(d-\Delta)^2\right).
\]

These estimates ultimately rely on techniques developed in the theory of Gaussian analytic functions, particularly the linear functional approach introduced by Nazarov and Sodin in their influential work on fluctuations of complex zeros. Overall, these studies are designed for Gaussian polynomials, and their methods do not seem to extend readily to non-Gaussian settings, even when the entries \(\xi_j\) have nice non-Gaussian densities.

\subsection{CLT for non-Gaussian ensembles, difficulties, and our approach}

Local universality of zero correlations such as Theorem \ref{theorem:universality} does not by itself provide an error
small enough to control every fixed moment of the centered zero count.  Our
proof instead uses a structural feature special to the Weyl basis: localization
in the coefficient index.

We will work with the normalized polynomial
\begin{equation}\label{eqn:Pn}
P_n(x)=e^{-x^2/2}\sum_{i=0}^n\xi_i\frac{x^i}{\sqrt{i!}}=: \sum_i \xi_i a_i(x),
\end{equation}
which has the same real zeros as the original Weyl polynomial.  For
\(x\asymp M\), the coefficient mass, \add{i.e. $|a_i(x)|^2=e^{-x^2}x^{2i}/i!$}  , (see \cite{ANgW-var}) is exponentially concentrated on
\[
i=x^2+O(x).
\]
After adding a padding of order \(x\sqrt{\log n}\), the discarded tail is
polynomially small in every prescribed power of \(n\), both for the polynomial
and its first derivative.

A small \(C^1\) perturbation, \add{such as discarding a coefficient tail}, 
 can change the number of real zeros only near a
\add{degenerate} zero.  Here we use the uniform two-dimensional
anti-concentration estimate of \cite{ANgW-var}: on the
multiplicative bulk interval \(I_W=[c_1M,c_2M]\), with overwhelming
probability,
\add{\[
\inf_{x\in I_W}\big(|P_n(x)|+|P_n'(x)|\big)>M^{-C}
\]
for an arbitrarily prescribed power $C>0$}.  Therefore
the coefficient tail can be removed without changing the zero count on each
spatial interval. \add{The point of this truncation is to ultimately convert coefficient localization into finite-range dependence}. \add{Indeed}, partition \(I_W\) into intervals \(J_j\) of length
\[
h=M^\gamma,
\]
where \(\gamma>0\) is small.  The localized zero count on \(J_j\) depends
only on coefficients with indices in a window of width
\(O(M\sqrt{\log n})\) around \(\{x^2:x\in J_j\}\).  Since
\[
Mh\gg M\sqrt{\log n},
\]
nonadjacent intervals use disjoint coefficient sets.  Their localized zero counts
are thus exactly independent. We note that the idea of this decomposition is not new, in fact it was already used in \cite{NSV} (and also in \cite{NSV-grad}) in their study of the so-called Jancovici–Lebowitz–Manificat law for large fluctuations of random complex zeros of GAF.

The centered interval counts consequently form, up to an event of arbitrarily
small polynomial probability, a finite-range dependent triangular array.  A
cluster moment expansion then has the standard form: (1) singleton clusters vanish,
(2) clusters of size at least three are negligible, (3) and only pair clusters survive.
The latter reconstruct the variance.  The variance asymptotic
\(\Var(N_{I_W})\asymp M\) is imported from \cite[Theorem 1.12]{ANgW-var},
combined with the known Gaussian Weyl variance.  This yields all Gaussian
moments and hence the central limit theorem. We also note that with  non-Gaussian coefficients, one must look beyond the Kac-Rice formula to obtain these moment bounds. These come from an application of Jensen's formula together with a one-point small-ball estimate.

We close by mentioning several relevant results on central limit theorems and related fluctuation questions for random polynomials with non-Gaussian coefficients. Maslova \cite{Mas}, O. Nguyen and Vu \cite{ONgV-CLT}, and, more recently, Do, N. Nguyen, and O'Rourke \cite{DoNgOr} obtained variance asymptotics and central limit theorems for broad classes of Kac-type and hyperbolic random polynomials, as well as certain derivatives and related extensions. In particular, the latter works develop comparison principles applicable to non-centered random polynomials.

A different approach, based on Wiener chaos decompositions and invariance principles, was developed in \cite{AP} by Angst and Poly for certain smooth statistics related to \(N_{\mathbb R}\) for random trigonometric polynomials. (However, that method does not appear to yield fluctuations of the nonsmooth statistic \(N_{\mathbb R}\) itself for the ensembles considered there). Our work complements these results by establishing a central limit theorem for the number of real zeros of the Weyl ensemble with non-Gaussian iid coefficients.


\subsection{Organization of the paper}

Section \ref{section:smallball} records the uniform one-point
anti-concentration estimate from \cite{ANgW-var} in the form needed here.
In Section \ref{section:CLT} we provide a
finite-range independence decomposition and the moment CLT proving Theorem \ref{thm:main}.
In Section \ref{section:full} we prove Theorem \ref{thm:full-positive-conditional}.
In Section \ref{section:full-real} we will pair reflected intervals into radial blocks,
establish the two-sided variance comparison, control the exterior zeros,
and finally prove Theorem \ref{thm:full-real}.

\medskip

{\bf Notation.}  We write $X =
O(Y)$, $X \ll Y$, or $Y \gg X$ if $|X| \leq CY$
for some absolute constant $C$. The constant $C$ may depend on some parameters, in which case we write
e.g. $X=O_\tau(Y)$ if $C=C(\tau)$.  We write $X \asymp Y$ if $X \gg Y$ and $Y \gg X$.
In what follows, $\|.\|_{\R/\Z}$ is the distance to the nearest integer, and $m=\mLeb(\cdot)$ is the Lebesgue measure. 

We say that an event $\CE_n$ occurs with {\it overwhelming} probability if 
$\P(\CE_n) =1-O(n^{-A})$ for any fixed $A$ (independent of $n$). Here the implied constant is allowed to depend on $A$.

We will assume $n \to \infty$ throughout the note.  On the
exponentially unlikely event that all coefficients vanish, we define every
finite-interval zero count to be zero; this convention has no effect on any
of our conclusions and ensures the deterministic bound $N_I\le n$.

\section{Uniform anti-concentration}
\label{section:smallball}

The only arithmetic input needed in the present proof is a uniform lower
bound preventing a nearly multiple real zero in the bulk.  This was already
proved in \cite{ANgW-var}; we record the consequence in the notation used
here.

Recall that for a scale \(M\), \cite{ANgW-var} works on intervals
\[
I(M)=[c_1M,c_2M]\subset [0,\sqrt n-M],
\]
where \(0<c_1<c_2\) are fixed constants.
Their Theorem 4.4 proves a uniform small-ball estimate for the
two-dimensional random walk \((P_n(x),P_n'(x))\), and Lemma 4.5 includes
the two endpoints. (We note that in \cite[Section 4]{ANgW-var}, $N=M$, $b_i= \sqrt{M} e^{-x^2/2}x^i/\sqrt{i!}$, and $c_i=(b_i(x))'$.)

\begin{proposition}[Uniform non-degeneracy on multiplicative bulk intervals]
\label{prop:ANgW-nondegeneracy}
Fix \(\theta>0\) and \(0<\varepsilon<1/2\).  Let
\[
I=[a,b]=[c_1M,c_2M]
\subset [0,\sqrt n-M]
\]
be an interval of the above form, with \(M\) sufficiently large.  Then
\begin{equation}\label{eqn:1}
\P\left(
\inf_{x\in I}
\sqrt{|P_n(x)|^2+|P_n'(x)|^2}
\le M^{-\theta}
\right)
\ll M^{-\theta+1/2+\varepsilon}.
\end{equation}
Moreover,
\begin{equation}\label{eqn:2}
\P\left(
\min\{|P_n(a)|,|P_n(b)|\}\le M^{-\theta}
\right)
\ll M^{-\theta}.
\end{equation}
\end{proposition}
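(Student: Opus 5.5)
This proposition is essentially a restatement of \cite[Theorem 4.4 and Lemma 4.5]{ANgW-var}. The plan is therefore to translate normalizations rather than to re-derive the small-ball theory. In \cite[Section 4]{ANgW-var} the random walk is $\sum_i \xi_i (b_i(x),c_i(x))$ with $b_i(x)=\sqrt M\, e^{-x^2/2}x^i/\sqrt{i!}$ and $c_i=b_i'$. Hence
\[
\Big(\sum_i\xi_i b_i(x),\sum_i \xi_i c_i(x)\Big)=\sqrt M\,\big(P_n(x),P_n'(x)\big).
\]
The first step is to rewrite the events in \eqref{eqn:1} and \eqref{eqn:2} in their scaling. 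The event $\inf_{x\in I}\sqrt{|P_n|^2+|P_n'|^2}\le M^{-\theta}$ becomes the event that the walk of \cite{ANgW-var} comes within $M^{1/2-\theta}$ of the origin somewhere on $I$. That reference's uniform estimate is phrased at such a scale: a small-ball probability at one point, uniformly over $x\in I$, of the form $\P(|(\cdot)|\le \delta)\ll \delta^2+\exp(-M^{c})$. I then apply it with $\delta$ equal to the rescaled radius and check that the exponents match the claimed $M^{-\theta+1/2+\varepsilon}$.

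If the cited theorem is only a pointwise statement, I will prove the infimum version by a net argument.
\begin{enumerate}[(i)]
\item Cover $I$ (length $\asymp M$) by a grid of mesh $\eta=M^{-\theta}$, which has $\asymp M^{1+\theta}$ points.
\item With overwhelming probability $\sup_I|P_n''|\le M^{O(1)}$, and in fact $\sup_I |P_n''|\ll \sqrt{\log n}\,M$. This follows from subgaussianity and the bound $\sum_i |a_i''(x)|^2\ll x^2$.
\item If $(P_n,P_n')$ is small at some $x$, then at the nearest grid point $x_0$ the pair is $O(M^{-\theta}\cdot M^{O(1)})$-small.
\end{enumerate}
A naive union bound at the two-dimensional scale $\delta$ then costs $M^{1+\theta}\delta^2$, which loses too much. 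The refined step is therefore to use a grid of mesh $\eta\approx \delta/|P_n'|$, where $|P_n'|$ is typically of order $1$ relative to the normalization. Near-vanishing of $P_n$ on an interval of length $\eta$ is then controlled by the one-dimensional small ball for $P_n$ at scale $\delta$, whereas $P_n'$ being small is a second small-ball event. Summing over $M/\eta$ cells gives $(M/\delta)\cdot\delta^{2}\cdot M^{\varepsilon}$, and after undoing the $\sqrt M$ normalization this is $M^{-\theta+1/2+\varepsilon}$. The extra factors of $\sqrt M$ and $M^\varepsilon$ come from the $\sqrt M$ scaling and from the logarithmic losses in the derivative bounds; this bookkeeping is where I expect to spend most care. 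It is also the main obstacle, because matching the exponent $1/2$ exactly depends on how \cite{ANgW-var} normalizes its walk and at which scales its Theorem 4.4 applies, with the caveat that the statement may need $\delta\ge \exp(-M^{c})$. For that reason I would first try to quote \cite[Theorem 4.4]{ANgW-var} directly in its infimum form and use the net argument only as a fallback.

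For \eqref{eqn:2} the uniformity is not needed. At each endpoint $x\in\{a,b\}$, one applies the one-dimensional (or projected two-dimensional) small-ball bound of \cite[Lemma 4.5]{ANgW-var} to $\sqrt M P_n(x)$ at scale $M^{1/2-\theta}$. This gives $\P(|P_n(x)|\le M^{-\theta})\ll M^{-\theta}+e^{-M^c}$, because a one-dimensional walk with variance $\asymp M$ has a small-ball probability linear in $\delta/\sqrt M$. A union bound over the two endpoints finishes the proof.
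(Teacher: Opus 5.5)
Your primary route is the paper's proof. The paper simply quotes \eqref{eqn:1} from \cite[Theorem 4.4]{ANgW-var} and \eqref{eqn:2} from the one-dimensional small-ball estimate \cite[Theorem 4.1]{ANgW-var}, both recorded in \cite[Lemma 4.5]{ANgW-var}, and it reads \eqref{eqn:1} as the verbatim content of that theorem for $(P_n,P_n')$, with no renormalization step. However, the ``check that the exponents match'' in your first step is not a formality. Multiplying by the deterministic constant $\sqrt M$ only moves the threshold from $M^{-\theta}$ to $M^{1/2-\theta}$. So if a bound of the form $M^{-\theta'+1/2+\varepsilon}$ at threshold $M^{-\theta'}$ were stated for the rescaled walk, substituting $\theta'=\theta-1/2$ would give $M^{-\theta+1+\varepsilon}$, not \eqref{eqn:1}.

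Your fallback net argument has a genuine gap: it does not reach the exponent $1/2+\varepsilon$.
\begin{enumerate}[(i)]
\item Rescaling never changes probabilities, so ``undoing the $\sqrt M$ normalization'' cannot supply the missing factor $M^{-1/2}$.
\item Your ``refined'' mesh $\delta/|P_n'|$ is random, so no union bound can run over it. With a deterministic mesh of size $\asymp\delta=M^{-\theta}$ you are back to the naive count $(M/\delta)\cdot\delta^2\cdot M^{\varepsilon}=M^{-\theta+1+\varepsilon}$.
\item With your input $\sum_i|a_i''(x)|^2\ll x^2$, the mesh must shrink to about $\delta/M$, and you lose a further power of $M$. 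In fact, with $\lambda=x^2$ one has $a_i''=\lambda^{-1}\big((i-\lambda)^2-i-\lambda\big)a_i$, so for $X\sim\operatorname{Pois}(\lambda)$,
\[
\sum_{i=0}^n|a_i''(x)|^2\le\lambda^{-2}\,\E\big[((X-\lambda)^2-X-\lambda)^2\big]=3 .
\]
This is what yields the nearly lossless derivative control \eqref{eq:derivative-bound} used in the appendix.
\end{enumerate}

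Done correctly, the net argument gives $M^{-\theta+1+\varepsilon}$. That is the same weaker exponent the paper itself obtains in Lemma~\ref{lemma:near-edge-window}, where it notes this is sufficient: the only use of \eqref{eqn:1} is Corollary~\ref{thm:smallball:inf}, with $\theta$ arbitrarily large. So your fallback would serve the paper's purposes, but it does not prove the proposition as stated. For the exponent $1/2+\varepsilon$ you must rely on \cite[Theorem 4.4]{ANgW-var} itself.
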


\begin{proof}
For \(I\subset(0,\infty)\), \eqref{eqn:1} is exactly
\cite[Theorem 4.4]{ANgW-var}, while the endpoint statement \eqref{eqn:2} is the
one-dimensional small-ball estimate \cite[Theorem 4.1]{ANgW-var}; the
combined form is also recorded as \cite[Lemma 4.5]{ANgW-var}.
\end{proof}

We will use the following immediate consequence on the interval of
Theorem \ref{thm:main} (wherein $\sigma_*$ was defined).

\begin{corollary}[Uniform non-degeneracy on \(I_W\)]
\label{thm:smallball:inf}
Fix \(A>0\).  There exists \(B=B(A,\sigma_*)>0\) such that
\begin{equation}
\P\left(
\inf_{x\in I_W}\big(|P_n(x)|+|P_n'(x)|\big)\le n^{-B}
\right)=O(n^{-A}).
\end{equation}
Moreover, for any deterministic collection $\CC$ of at most \(n^{O(1)}\) points
\(a\in I_W\), after increasing \(B\) one also has
\begin{equation}
\P\left(\min_{a \in \CC}|P_n(a)|\le n^{-B}\right)=O(n^{-A}).
\end{equation}
\end{corollary}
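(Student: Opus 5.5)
The plan is to apply Proposition \ref{prop:ANgW-nondegeneracy} with \(\theta\) chosen large in terms of \(A\) and \(\sigma_*\), using the lower bound \(M\ge n^{\sigma_*}\) to turn powers of \(M\) into powers of \(n\). Two conversions are needed.

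First, thresholds: since \(M\le\sqrt n\), we have \(M^{-\theta}\ge n^{-\theta/2}\). Hence any threshold of the form \(n^{-B}\) with \(B\ge\theta/2\) is at most \(M^{-\theta}\), so the events in the corollary are contained in the corresponding events of the proposition.

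Second, the norm: \(|P_n(x)|+|P_n'(x)|\ge \sqrt{|P_n(x)|^2+|P_n'(x)|^2}\). Therefore
\[
\Big\{\inf_{x\in I_W}\big(|P_n(x)|+|P_n'(x)|\big)\le n^{-B}\Big\}\subset \Big\{\inf_{x\in I_W}\sqrt{|P_n(x)|^2+|P_n'(x)|^2}\le M^{-\theta}\Big\}.
\]

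For the first claim, fix \(\varepsilon=1/4\). By \eqref{eqn:1}, the probability of the event is \(\ll M^{-\theta+3/4}\le n^{-\sigma_*(\theta-3/4)}\), where we used \(M\ge n^{\sigma_*}\) and \(\theta>3/4\). Choose \(\theta\) with \(\sigma_*(\theta-3/4)\ge A\), and set \(B=\theta/2\). This choice depends only on \(A\) and \(\sigma_*\). The hypothesis that \(M\) is sufficiently large holds because \(M\ge n^{\sigma_*}\to\infty\), and \(I_W\) has exactly the form required in the proposition.

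For the second claim, fix a point \(a\in\CC\). The endpoint estimate \eqref{eqn:2} is a one-point small-ball bound. It is stated for the endpoints \(a,b\) of a bulk interval, but it comes from \cite[Theorem 4.1]{ANgW-var}, which is a bound at any single point of the bulk. I expect this to be the main (mild) obstacle: one must justify that the estimate applies to an arbitrary \(a\in I_W\), not just to \(c_1M\) and \(c_2M\). One route is to rely on the fact that \cite[Theorem 4.1]{ANgW-var} holds at each point of \(I(M)\). The alternative I would try first is to apply \eqref{eqn:2} to a bulk interval \([c_1'M',c_2'M']\) with left endpoint \(a\), choosing \(M'\asymp M\) so that the containment \(\subset[0,\sqrt n-M']\) still holds. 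The constants in \eqref{eqn:2} depend only on \(c_1',c_2'\) ranging over a compact family, so the bound is uniform in \(a\).

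Either way, \(\P(|P_n(a)|\le M^{-\theta'})\ll M^{-\theta'}\) holds uniformly in \(a\in I_W\). A union bound over \(|\CC|\le n^{D}\) points then gives a total probability of at most \(n^{D}M^{-\theta'}\le n^{D-\sigma_*\theta'}\). Choosing \(\theta'\) with \(\sigma_*\theta'\ge A+D\) and increasing \(B\) to \(\max(B,\theta'/2)\) completes the proof.
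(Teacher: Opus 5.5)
Your proposal is correct and matches the paper's proof. You apply Proposition \ref{prop:ANgW-nondegeneracy} with $\theta$ large, use $n^{\sigma_*}\le M\le\sqrt n$ to convert both the probability bound and the threshold into powers of $n$, and handle the deterministic points by the one-point small-ball estimate \cite[Theorem 4.1]{ANgW-var} plus a union bound; that estimate holds at every bulk point, and this is exactly the route the paper takes. Your threshold conversion $M^{-\theta}\ge n^{-\theta/2}$ is slightly sharper than the paper's $M^{-\theta}\ge n^{-\theta}$, and your fallback via \eqref{eqn:2} also works (take $a$ as a right endpoint, or shrink $M'$ when $a$ is near $c_2M$), but neither changes the argument.
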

We remark that this type of transversality result has been proved to be useful in the study of (various ensembles of) random polynomials, see for instance \cite{DHNgV, NgNgV, ANgW-con}.  
\begin{proof}
Apply Proposition \ref{prop:ANgW-nondegeneracy} with a sufficiently large
fixed \(\theta\).  Since \(M\ge n^{\sigma_*}\),
\[
M^{-\theta+1/2+\varepsilon}
\le n^{-\sigma_*\theta + 1/2+\varepsilon},
\]
which is \(O(n^{-A})\) once \(\theta\) is large enough.  Also
\(M\le\sqrt n\), so \(M^{-\theta}\ge n^{-\theta}\); hence the lower bound
at scale \(M^{-\theta}\) implies the weaker threshold \(n^{-B}\) after
choosing \(B\ge\theta\).

For the deterministic points, use the one-dimensional translated small-ball
estimate \cite[Theorem 4.1]{ANgW-var} and a union bound.  Because the number of
points is only polynomial in \(n\), increasing the exponent in that theorem
makes the total error \(O(n^{-A})\).
\end{proof}

\section{Proof of Theorem \ref{thm:main}: decomposition and moment argument}\label{section:CLT}

Throughout this section \(I_W=[c_1M,c_2M]\) is the interval in
Theorem \ref{thm:main}.  By \cite[Theorem 1.12]{ANgW-var}, together with
the Gaussian Weyl variance asymptotic recalled in \cite{DV} (also recorded in \cite[Theorem 1.6]{ANgW-var}), we have
\begin{equation}\label{eq:var-scale}
 \Var(N_{I_W})\asymp M.
\end{equation}

Fix an integer \(k\ge1\) that later will play the role of {\it moment order} in the proof of Theorem \ref{thm:main}.  Choose
\begin{equation}\label{eqn:gamma}
0<\gamma<\frac{1}{10(3k+1)}
\end{equation}

and put
\[
h=M^\gamma.
\]
Partition \(I_W\) into disjoint consecutive half-open intervals
\[
J_j=[r_j,r_j+h),
\]
with the terminal interval closed at its right endpoint and possibly shorter.
Set
\[
Y_j:=N_{J_j}-\E N_{J_j},
\qquad
Z_M:=\sum_jY_j=N_{I_W}-\E N_{I_W}.
\]

\subsection{Zero-count stability and coefficient localization}

\begin{lemma}[Stability of real zero counts]
\label{lemma:zero-homotopy}
Let \(I=[a,b]\) be compact \note{ why does it have to be mentioned?}and let \(H:[0,1]\times I\to\R\) be \(C^1\).
Assume that, for some \(\eta>0\),
\[
 |H(t,x)|+|\partial_xH(t,x)|\ge\eta
 \qquad(t\in[0,1],\ x\in I),
\]
and
\[
 |H(t,a)|,\ |H(t,b)|\ge\eta
 \qquad(t\in[0,1]).
\]
Then the number of real zeros of \(H(t,\cdot)\) in \(I\) is independent of
\(t\).
\end{lemma}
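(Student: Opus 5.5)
We argue that the zero count $t\mapsto \#\{x\in I: H(t,x)=0\}$ is locally constant on $[0,1]$. Since $[0,1]$ is connected, it is then constant.

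\emph{Finiteness and simplicity of zeros.} Fix $t$. At any zero $x_0$ of $H(t,\cdot)$ the hypothesis gives $|\partial_xH(t,x_0)|\ge\eta>0$, so every zero is simple. The endpoint hypothesis gives $|H(t,a)|,|H(t,b)|\ge\eta$, so every zero lies in the open interval $(a,b)$. Simple zeros are isolated. If there were infinitely many zeros in the compact interval $I$, they would accumulate at some $x_*\in I$. By continuity $H(t,x_*)=0$, so $x_*$ would also be a simple zero, which contradicts isolatedness. Hence $H(t,\cdot)$ has finitely many zeros $x_1<\dots<x_m$, all in $(a,b)$.

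\emph{Local persistence of zeros.} Since $H$ is $C^1$ on the compact set $[0,1]\times I$, both $H$ and $\partial_xH$ are uniformly continuous.

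Around each zero $x_\ell$, choose disjoint intervals $U_\ell=[x_\ell-\delta,x_\ell+\delta]\subset(a,b)$ on which $|\partial_xH(t,\cdot)|\ge\eta/2$. On the compact set $K=I\setminus\bigcup_\ell \operatorname{int}U_\ell$, the function $|H(t,\cdot)|$ is positive, hence bounded below by some $\kappa>0$. Note that $K$ contains the endpoints $a,b$.

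By uniform continuity there is $\rho>0$ such that for $|s-t|<\rho$:
\begin{itemize}
\item $|H(s,\cdot)|\ge\kappa/2$ on $K$, so $H(s,\cdot)$ has no zeros outside $\bigcup_\ell U_\ell$;
\item $|\partial_xH(s,\cdot)|\ge\eta/4$ on each $U_\ell$, so $H(s,\cdot)$ is strictly monotone there and has at most one zero in each $U_\ell$;
\item $H(s,x_\ell\pm\delta)$ keeps the signs of $H(t,x_\ell\pm\delta)$.
\end{itemize}
These last two signs are opposite, because $H(t,\cdot)$ is strictly monotone on $U_\ell$ and vanishes at its center. By the intermediate value theorem, $H(s,\cdot)$ then has exactly one zero in each $U_\ell$. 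Therefore the count at $s$ equals $m$ for all $|s-t|<\rho$.

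\emph{Conclusion and main obstacle.} Local constancy of an integer-valued function on the connected set $[0,1]$ gives constancy. The only delicate point is the exclusion of zeros entering or leaving through the endpoints $a,b$. This is exactly what the hypothesis $|H(t,a)|,|H(t,b)|\ge\eta$ rules out: it guarantees $a,b\in K$ and keeps zeros uniformly away from $\partial I$.

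Creation or annihilation of a pair of zeros in the interior would require a point where $H$ and $\partial_xH$ vanish simultaneously. That is excluded by the uniform bound $|H|+|\partial_xH|\ge\eta$.
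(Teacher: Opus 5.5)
Your proof is correct and follows the same route as the paper. At a fixed $t$ you isolate the finitely many simple interior zeros, bound $|H|$ below on the compact complement, keep that bound under small perturbations of $t$, and conclude by connectedness of $[0,1]$. The only difference is that you obtain exactly one zero in each isolating interval from strict monotonicity and the intermediate value theorem, where the paper invokes the implicit function theorem; your version also handles $t_0\in\{0,1\}$ without any comment about one-sided neighborhoods.
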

For our application, we only need to compare the numbers of real zeros corresponding to (t=0) and (t=1). Alternative proofs of the required comparison can be obtained, for instance, from the perturbative approaches developed in \cite{NS-concentration, NgZ, ANgW-con} to study the concentration of the number of real zeros. For the reader's convenience, we provide a proof of Lemma \ref{lemma:zero-homotopy} in Appendix \ref{appendix:lemmas}.

We next need the following analog of \cite[Claim 3.1]{ANgW-var} and \cite[Lemma 6.2]{ANgW-con}, which capture a key property of the Weyl basis: at a spatial point $x$, the coefficient mass of both the polynomial and its derivative is exponentially localized near the index $i=x^2$ on the natural scale $x$.

\begin{lemma}\label{lemma:weyl-C1-tail}
There are constants \(c,C>0\) such that, uniformly for \(x\in I_W\) and
\[
1\le R\le x^{1/3},
\]
we have
\[
\sum_{\substack{0\le i\le n\\ |i-x^2|\ge Rx}}
\left[
\left|e^{-x^2/2}\frac{x^i}{\sqrt{i!}}\right|^2
+
\left|
\left(e^{-x^2/2}\frac{x^i}{\sqrt{i!}}\right)'
\right|^2
\right]
\le
C(1+R^2)e^{-cR^2}.
\]
\end{lemma}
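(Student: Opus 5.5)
The plan is to identify $p_i(x):=e^{-x^2}x^{2i}/i!$ as the Poisson$(\lambda)$ probability mass function with $\lambda=x^2$, so that $\sum_i p_i(x)=1$ (summing over all $i\ge0$). The first term of the summand is then exactly $\P(|\mathrm{Poi}(\lambda)-\lambda|\ge R\sqrt\lambda)$, restricted to $i\le n$. Dropping the restriction $i\le n$ only enlarges the sum, so I will bound the full tail over all $i\ge 0$.

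For the derivative I compute directly:
\[
\Big(e^{-x^2/2}\frac{x^i}{\sqrt{i!}}\Big)'=e^{-x^2/2}\frac{x^i}{\sqrt{i!}}\Big(\frac{i}{x}-x\Big)=e^{-x^2/2}\frac{x^i}{\sqrt{i!}}\cdot\frac{i-x^2}{x}.
\]
Hence the squared derivative equals $p_i(x)\,(i-\lambda)^2/\lambda$. With $K$ a Poisson$(\lambda)$ variable, the whole sum is bounded by
\[
\E\Big[\Big(1+\frac{(K-\lambda)^2}{\lambda}\Big)\1_{\{|K-\lambda|\ge R\sqrt\lambda\}}\Big].
\]

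To bound this I use a Poisson Chernoff bound in the moderate-deviation regime. For $0\le t\le\lambda$,
\[
\P(|K-\lambda|\ge t)\le 2\exp\big(-t^2/(2(\lambda+t/3))\big)\le 2e^{-t^2/(3\lambda)}
\]
(Bernstein's inequality). For $t\ge\lambda$ the bound is $e^{-ct}$, which is even better. Set $W=(K-\lambda)/\sqrt\lambda$; then $\P(|W|\ge s)\le 2e^{-c s^2}$ for $s\le\sqrt\lambda$ and $\le 2e^{-cs\sqrt\lambda}\le 2e^{-cs}$ for larger $s$.

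Writing $\E[(1+W^2)\1_{|W|\ge R}]=(1+R^2)\P(|W|\ge R)+\int_R^\infty 2s\,\P(|W|\ge s)\,ds$, the Gaussian regime $s\le\sqrt\lambda=x$ contributes $O((1+R^2)e^{-cR^2})$. The regime $s>x$ contributes at most $O(x e^{-cx})$. Since $R\le x^{1/3}$, we have $R^2\le x^{2/3}\le x$, so $xe^{-cx}\ll e^{-c'R^2}$ after shrinking $c$. Here the hypothesis $R\le x^{1/3}$ is more than enough (even $R\le \sqrt{x}$ would do). This gives the claimed bound uniformly in $x\in I_W$, using only that $x\ge c_1M$ is large; for small $x$ the bound is trivial after enlarging $C$.

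The main obstacle is only this bookkeeping of the two Bernstein regimes, Gaussian versus exponential, together with the extra polynomial weight $(i-\lambda)^2/\lambda$. Integrating by parts against the tail bound handles the weight cleanly. No input from earlier results is needed beyond elementary Poisson concentration.
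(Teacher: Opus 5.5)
Your proposal is correct and follows essentially the same route as the paper: the Poisson identification with $\lambda=x^2$, the identity $|a_i'(x)|^2=\frac{(i-\lambda)^2}{\lambda}\P(X=i)$, Chernoff-type tails split at $u=\sqrt{\lambda}$ into a Gaussian and an exponential regime, and the layer-cake formula for the weighted tail. The differences are cosmetic: you use Bernstein's form of the bound and a cruder $O(xe^{-cx})$ estimate beyond $\sqrt{\lambda}$, where the paper uses Bennett-type bounds and gets $O(e^{-c\lambda})$; both are absorbed via $R^2\le x^{2/3}$.
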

A proof of this result is presented in Appendix \ref{appendix:lemmas} for the reader's convenience.

We now turn to one of the most crucial ingredients of our proof.

\begin{lemma}[coefficient independence decomposition]
\label{lemma:radial-localization}
Fix \(A>0\) and $k \in \Z_+$.  There exists \(K=K(A,k)>0\) such that, with
\[
 R=K\sqrt{\log n},
\]
the following holds with probability \(1-O(n^{-A})\) simultaneously for all
intervals \(J_j\).  Set
\[
W_j:=\Big\{0\le i\le n:\
\dist\big(i,\{x^2:x\in J_j\}\big)\le R(r_j+h)\Big\}
\]
and
\[
P_{n,j}(x):=e^{-x^2/2}\sum_{i\in W_j}\xi_i\frac{x^i}{\sqrt{i!}}.
\]
Then
\[
N_{J_j}(P_n)=N_{J_j}(P_{n,j})
\qquad\text{for every }j.
\]
Moreover, for all sufficiently large \(n\),
\[
W_j\cap W_\ell=\varnothing
\qquad\text{whenever }|j-\ell|\ge2.
\]
\end{lemma}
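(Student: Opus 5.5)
We plan to prove Lemma \ref{lemma:radial-localization} by applying the homotopy Lemma \ref{lemma:zero-homotopy} to the linear interpolation between $P_n$ and $P_{n,j}$ on each $\overline{J_j}$. The required non-degeneracy will come from Corollary \ref{thm:smallball:inf}, and the smallness of the tail from Lemma \ref{lemma:weyl-C1-tail}. The disjointness claim is a separate deterministic computation.

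\textbf{Step 1: the tail is small in $C^1$.} Write $T_j:=P_n-P_{n,j}=e^{-x^2/2}\sum_{i\notin W_j}\xi_i x^i/\sqrt{i!}$. For $x\in J_j$ we have $x\le r_j+h$. Hence every $i\notin W_j$ satisfies $|i-x^2|>R(r_j+h)\ge Rx$. Lemma \ref{lemma:weyl-C1-tail} then gives
\[
\sum_{i\notin W_j}\big(|a_i(x)|^2+|a_i'(x)|^2\big)\le C(1+R^2)e^{-cR^2}=O(n^{-cK^2/2})
\]
uniformly in $x\in I_W$ and $j$. Here we need $R=K\sqrt{\log n}\le x^{1/3}$, which holds since $x\ge c_1M\ge c_1n^{\sigma_*}$. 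Some sets have $|i-x^2|\ge R' x$ for larger $R'>x^{1/3}$; for those we apply the lemma with $R=x^{1/3}$, which still gives a bound of order $e^{-cx^{2/3}}$, smaller than any power of $n$.

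We then pass from this deterministic $\ell^2$ bound to a uniform bound on the random sum. Take a grid of $n^{O(1)}$ points in $I_W$. At each grid point, subgaussian concentration (Hoeffding) gives $|T_j|+|T_j'|\le n^{-cK^2/4}$ with overwhelming probability. Between grid points we use a crude derivative bound: with overwhelming probability all $|\xi_i|\le\log n$, so $|T_j''|\le n^{O(1)}$ on $I_W$. A union bound over grid points and over the $O(M)$ indices $j$ then yields
\[
\sup_{x\in I_W}\big(|T_j(x)|+|T_j'(x)|\big)\le n^{-B-1}\quad\text{for all } j
\]
with probability $1-O(n^{-A})$, once $K$ is large relative to $B$.

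\textbf{Step 2: homotopy.} Set $H(t,x)=P_n(x)-tT_j(x)$ on $\overline{J_j}$. By Corollary \ref{thm:smallball:inf}, with probability $1-O(n^{-A})$ we have $|P_n|+|P_n'|\ge n^{-B}$ on $I_W$. Apply the second part of the corollary with $\CC$ the set of all endpoints $r_j$ (and $c_2M$); there are at most $n$ of them, so also $|P_n|\ge n^{-B}$ at every endpoint. Combined with Step 1, this gives $|H|+|\partial_xH|\ge n^{-B}/2$ on $\overline{J_j}$, and $|H|\ge n^{-B}/2$ at the endpoints, for all $t\in[0,1]$. Lemma \ref{lemma:zero-homotopy} then shows that $P_n$ and $P_{n,j}$ have equally many zeros on $\overline{J_j}$. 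Since neither function vanishes at the endpoints, the half-open counts agree as well.

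The exponents must be chosen in the right order: first $B=B(A,\sigma_*)$ from the corollary, then $K$ large enough that the tail bound beats $n^{-B-1}$. The dependence on $k$ enters only through $h=M^\gamma$ and is harmless.

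\textbf{Step 3: disjointness.} $W_j$ is contained in $[r_j^2-R(r_j+h),\,(r_j+h)^2+R(r_j+h)]$. Suppose $\ell\ge j+2$. Then $r_\ell\ge r_j+2h$, so
\[
r_\ell^2-(r_j+h)^2\ge h(2r_j+3h)\ge 2c_1Mh.
\]
The two paddings sum to at most $R(r_j+h)+R(r_\ell+h)\le 4c_2RM$. Disjointness therefore holds once $2c_1M^\gamma>4c_2K\sqrt{\log n}$, which is true for large $n$ because $M\ge n^{\sigma_*}$.

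The main obstacle I expect is Step 1: upgrading the pointwise $\ell^2$ bound to a uniform random bound with polynomially small failure probability. This is also where the regime $R>x^{1/3}$, outside the stated range of Lemma \ref{lemma:weyl-C1-tail}, must be handled.
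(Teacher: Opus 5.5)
Your proposal is correct and follows the paper's proof essentially step for step. Both use the interpolation $(1-t)P_n+tP_{n,j}$ fed into Lemma~\ref{lemma:zero-homotopy}, the $n^{-B}$ non-degeneracy on $I_W$ and at the interval endpoints from Corollary~\ref{thm:smallball:inf}, the tail bound from Lemma~\ref{lemma:weyl-C1-tail} with $K$ chosen after $B$, and the same separation ($\gg Mh$) versus padding ($O(RM)$) comparison for disjointness.

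The only divergence is your Step~1, which the paper handles in one line. On the exponentially likely event $\sum_l\xi_l^2\le n^2$, Cauchy--Schwarz gives $\sup_{x\in J_j}(|T_j|+|T_j'|)\le Cn(1+R)e^{-cR^2/2}$ deterministically. This works because Lemma~\ref{lemma:weyl-C1-tail} is already uniform in $x$ and already sums over every $i$ with $|i-x^2|\ge Rx$, which is a superset of $\{i\notin W_j\}$. So neither your grid/Hoeffding argument nor the separate $R'>x^{1/3}$ case is needed. Incidentally, as written your Hoeffding threshold $n^{-cK^2/4}$ is only of the order of $(\sum_{i\notin W_j}a_i(x)^2)^{1/2}$ under your recorded bound $O(n^{-cK^2/2})$. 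To get overwhelming probability there you would have to use the sharper bound $n^{-cK^2+o(1)}$.
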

As already mentioned, a similar (but not identical) independence decomposition was already exploited in \cite[Lemma 5]{NSV} (and also \cite[Section 3]{NSV-grad}).

\begin{proof}
Choose \(B\) large and set \(\delta=n^{-B}\).  By
Corollary \ref{thm:smallball:inf}, after increasing \(B\) if necessary,
with probability \(1-O(n^{-A-10k})\),
\[
\inf_{x\in I_W}\big(|P_n(x)|+|P_n'(x)|\big)>4\delta
\]
and \(|P_n(a)|>4\delta\) at every interval endpoint.

Consider an interval $J_j$. We observe that if \(x\in J_j\) and \(i\notin W_j\), then
$$|i-x^2|\ge R(r_j+h) \ge Rx.$$
Lemma \ref{lemma:weyl-C1-tail} and
Cauchy--Schwarz give, on the event \(\sum_{l=0}^n\xi_l^2\le n^2\),
\[
\sup_{x\in J_j}
\big(|P_n(x)-P_{n,j}(x)|+|P_n'(x)-P_{n,j}'(x)|\big)
\le Cn(1+R)e^{-cR^2/2}.
\]
By Chernoff's bound, the exceptional probability (of \(\sum_{l=0}^n\xi_l^2> n^2\)) is exponentially small in $n$.  By taking \(K\) sufficiently
large, and after taking union bound over all $J_j$, we can make the last equation at most \(\delta\), simultaneously for all intervals.

Next, for \(0\le t\le1\) we let
\[
P_t=(1-t)P_n+tP_{n,j}.
\]
Then on (the closure of) \(J_j\),
\[
|P_t|+|P_t'| \ge |P_{n}|+|P_{n}'|-t |P_{n} - P_{n,j}|-t |P_{n}' - P_{n,j}'|>4\delta - \delta -\delta = 2\delta,
\]
and at the endpoints 
\[|P_t|> |P_{n}|-t |P_{n} - P_{n,j}| > 4\delta - \delta >2\delta.\] 

 Lemma
\ref{lemma:zero-homotopy} applied to $t=0,1$ therefore yields
$$N_{J_j}(P_n)=N_{J_j}(P_{n,j}).$$
For the second statement, the squared intervals corresponding to intervals with \(|j-\ell|\ge2\)
are separated by 
$$\min_j \{(r_j+2h)^2 - (r_j+h)^2 \}\ge 2(c_1M)h \gg Mh$$ 
whereas in the definition of $W_{j}$ we surround each squared interval by a width
\(R(r_j+h) =O(RM)\).  Since \(h=M^\gamma \) is much larger than $\sqrt{\log n}=R/K$ (as $M\ge n^{\sigma_\ast}$), the sets
\(W_j,W_\ell\) are disjoint for large \(n\).
\end{proof}

\subsection{Finite-range independence and local moments}\label{sub:moments}

\begin{proposition}[asymptotic finite-range independence]
\label{prop:radial-independence}
Fix \(p\ge1\) and \(A>0\).  Let \(C_1,\dots,C_s\) be disjoint collections
of interval indices such that
\[
|j-\ell|\ge2
\qquad(j\in C_a,\ \ell\in C_b,\ a\ne b).
\]
For each \(a\), let \(H_a\) be a monomial of total degree at most \(p\) in
\(\{N_{J_j}(P_n):j\in C_a\}\).  Then for sufficiently large $n$
\[
\E(\prod_{a=1}^sH_a)
=
\prod_{a=1}^s\E H_a+O(n^{-A}).
\]
The same statement holds for centered interval counts $Y_j =N_{J_j} - \E N_{J_j}$.
\end{proposition}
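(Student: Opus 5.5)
We replace each count $N_{J_j}(P_n)$ by its localized version $N_{J_j}(P_{n,j})$ from Lemma \ref{lemma:radial-localization}. The localized counts for different groups depend on disjoint sets of coefficients, so they are exactly independent. The replacement costs only $O(n^{-A})$, because of the overwhelming-probability event and the deterministic bound $N_I\le n$.

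Concretely, let $\CE$ be the event of Lemma \ref{lemma:radial-localization}, applied with exponent $A'=A+p+10$ in place of $A$; then $\P(\CE^c)=O(n^{-A'})$. Set $\tilde N_j:=N_{J_j}(P_{n,j})$, and let $\tilde H_a$ be the monomial $H_a$ with each $N_{J_j}(P_n)$ replaced by $\tilde N_j$. On $\CE$ we have $\prod_a H_a=\prod_a\tilde H_a$. Both $N_{J_j}(P_n)$ and $\tilde N_j$ are at most $n$ deterministically. For $\tilde N_j$ this is because $P_{n,j}$ is $e^{-x^2/2}$ times a polynomial of degree at most $n$, with the same zero-polynomial convention. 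The total degree of $\prod_a H_a$ is at most $sp$, where $s$ is the number of groups. Note that $s$ must be bounded (say by $k$ or $p$ in the intended application) for the error to remain polynomial. With that, $|\prod H_a|,|\prod\tilde H_a|\le n^{sp}$, and hence
\[
\Big|\E\prod_a H_a-\E\prod_a\tilde H_a\Big|\le 2n^{sp}\P(\CE^c)=O(n^{-A})
\]
after choosing $A'\ge A+sp$. The same argument gives $|\E H_a-\E\tilde H_a|\le 2n^{p}\P(\CE^c)$ for each $a$. Since each $|\E H_a|\le n^p$, a telescoping sum yields $\prod_a\E H_a=\prod_a\E\tilde H_a+O(n^{-A})$.

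The key step is exact independence. $\tilde H_a$ is a measurable function of $\{\xi_i: i\in\bigcup_{j\in C_a}W_j\}$. The hypothesis $|j-\ell|\ge2$ for $j\in C_a$, $\ell\in C_b$ with $a\neq b$, combined with the disjointness $W_j\cap W_\ell=\varnothing$ from Lemma \ref{lemma:radial-localization}, shows that these coefficient sets are pairwise disjoint across groups. Since the $\xi_i$ are iid, the $\tilde H_a$ are independent, so $\E\prod_a\tilde H_a=\prod_a\E\tilde H_a$. Combining this with the two comparisons above proves the claim for the counts.

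For the centered counts $Y_j=N_{J_j}-\E N_{J_j}$, expand each monomial in the $Y_j$ into a polynomial in the $N_{J_j}$. The number of terms is bounded in terms of $p$. The coefficients are products of the means $\E N_{J_j}$, each at most $n$, so they are bounded by $n^{p}$. Alternatively, one can define $\tilde Y_j=\tilde N_j-\E N_{J_j}$ directly; these are still functions of the disjoint coefficient blocks, and $|Y_j|,|\tilde Y_j|\le 2n$, so the identical argument applies verbatim. The main point needing care is bookkeeping: the number of groups $s$ and the degrees must be bounded, so that $n^{O(sp)}\P(\CE^c)$ is absorbed by choosing the exponent $A'$ in Lemma \ref{lemma:radial-localization} large enough. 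Otherwise the proof is a direct consequence of the localization lemma.
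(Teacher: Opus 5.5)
Your proposal is correct and follows the paper's proof essentially step for step. Both replace $N_{J_j}(P_n)$ by the localized counts $N_{J_j}(P_{n,j})$ from Lemma \ref{lemma:radial-localization}, obtain exact independence from the disjoint windows $W_j$, and transfer back using $N\le n$ with an exponent $A'>A+ps$; like the paper, you implicitly need $s$ to be bounded. For the centered counts, the paper expands into monomials in the $N_j$, while your centering of $\tilde N_j$ by the constant $\E N_{J_j}$ is an equally valid and slightly more direct variant.
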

\begin{proof}
Roughly speaking, our proof simply relies on the observation that if \(j\in C_a\) and
\(\ell\in C_b\), \(a\ne b\), then \(W_j\cap W_\ell=\varnothing\).
Hence the corresponding families of zero counts of $\{P_{n,j}, j\in C_{a}\}_{a=1}^{s}$ are independent because the coefficients are independent.  Lemma \ref{lemma:radial-localization} then matches the original $P_{n}$
and $P_{n,j}$ zero counts outside an event $\CE$ of arbitrarily small polynomial
probability in $n$. Since every zero count is at most \(n\), the exceptional event
contributes \(O(n^{-A})\) after choosing its exponent sufficiently large.  

More formally, we write
\[
N_j:=N_{J_j}(P_n),
\quad
\widetilde N_j:=N_{J_j}(P_{n,j}).
\]
Thus \(N_j,\widetilde N_j\le n\). For each \(a\), write
\[
H_a=\prod_{j\in C_a}N_j^{m_{a,j}},
\quad
d_a:=\sum_{j\in C_a}m_{a,j}\le p,
\]
and
define the corresponding localized monomial
\[
\widetilde H_a
:=
\prod_{j\in C_a}\widetilde N_j^{m_{a,j}}.
\]
As already observed, the localized monomials are independent because if \(a\ne b\) then
\[
W_j\cap W_\ell=\varnothing
\qquad
(j\in C_a,\ \ell\in C_b)
\]
by Lemma \ref{lemma:radial-localization}. Therefore, the families
\[
\{\widetilde N_j:j\in C_a\},
\qquad a=1,\dots,s,
\]
depend on disjoint collections of coefficients, and hence they are independent. We obtain
\begin{equation}\label{ind:1}
\E(\prod_{a=1}^s\widetilde H_a)
=
\prod_{a=1}^s\E\widetilde H_a.
\end{equation}

It remains to transfer this identity from the localized counts back to
the original counts. Fix \(A'>0\), to be chosen sufficiently large, and
let \(\CE^c\) be the event from Lemma
\ref{lemma:radial-localization} on which
\[
N_j=\widetilde N_j
\quad\text{for every interval }J_j.
\]
Then
\[
\P(\CE)=O(n^{-A'}).
\]
Thus for each \(a\),
\begin{equation}\label{ind:2}
\left|\E H_a-\E\widetilde H_a\right| = \left|\E (H_a \1_{\CE})-\E(\widetilde H_a \1_\CE) +\E (H_a \1_{\CE^c}) - \E(\widetilde H_a \1_{\CE^c})\right|
\le
2n^{d_a}\P(\mathcal E) = O(n^{d_a-A'}).
\end{equation}
More generally, by inserting $\1_\CE$ and $\1_{\CE^c}$ and telescoping, with \(
d=\sum_{a=1}^s d_a\le ps\), we have 
\begin{equation}\label{ind:3}
\left|
\E\Big(\prod_{a=1}^sH_a\Big)
-
\E\Big(\prod_{a=1}^s\widetilde H_a\Big)
\right|
\le
2^s n^d\P(\mathcal E)
=
O(n^{-A'+d}).
\end{equation}
Combining \eqref{ind:1}, \eqref{ind:2} and \eqref{ind:3}, using telescoping again, we obtain
\[
\E\left(\prod_{a=1}^sH_a\right)
=
\prod_{a=1}^s\E H_a+O(n^{-A'+d}).
\]
Choosing \(A'>A+ps\) proves the first assertion.

For centered interval counts, we expand each monomial in the variables
\[
Y_j=N_j-\E N_j
\]
as a finite linear combination of monomials in the uncentered counts
\(N_j\). By applying the first assertion term by term, and increasing \(A'\)
once more if necessary, we complete the proof of the centered version.
\end{proof}

We next provide some crude bounds for local moments.

\begin{lemma}\label{lemma:local-moments}
For every fixed \(q\ge1\), 
\[
\E|Y_j|^q\le n^{o(1)}(1+h^2)^q
\]
uniformly in \(j\), where the implied constant may depend on $q$.
\end{lemma}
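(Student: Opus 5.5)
We bound the $q$-th moment of $N_{J_j}$ itself; the centered version then follows from $|Y_j|^q\le 2^{q-1}(N_{J_j}^q+(\E N_{J_j})^q)$ and Jensen. Throughout, $P_n$ stands for the normalized Weyl polynomial of \eqref{eqn:Pn}. The tool is Jensen's formula, as the introduction indicates, combined with a one-point small-ball estimate from Corollary \ref{thm:smallball:inf}.

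Let $x_0$ be the midpoint of $J_j$ and let $D=D(x_0,2h)$. The real zeros in $J_j\subset D(x_0,h)$ are zeros of the entire function $f(z)=\sum_{i\le n}\xi_i z^i/\sqrt{i!}$, which has the same zeros as $P_n$. Jensen's formula gives
\[
N_{J_j}\le \frac{1}{\log 2}\Big(\log\max_{|z-x_0|=2h}|f(z)|-\log|f(x_0)|\Big).
\]

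For the upper term, we use $|z|\le x_0+2h$ on the circle. By Cauchy--Schwarz on the event $\sum\xi_i^2\le n^2$,
\[
|f(z)|\le n\Big(\sum_i |z|^{2i}/i!\Big)^{1/2}\le n\,e^{|z|^2/2}.
\]
Hence
\[
\log\max|f|\le \log n+\tfrac12(x_0+2h)^2=\log n+\tfrac12x_0^2+O(Mh+h^2).
\]
This is too crude: $Mh$ is far larger than $h^2$. So we must compare with the lower term, which we write as $-\log|f(x_0)|=-\log|P_n(x_0)|-\tfrac12x_0^2$; the $\tfrac12x_0^2$ cancels. The remaining $O(Mh)$ has to be removed using the actual Gaussian growth of $e^{-\Re(z^2)/2}$-type normalization. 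The right comparison is $|f(z)|e^{-x_0^2/2-x_0\Re(z-x_0)}$, i.e. we apply Jensen to $g(z)=f(z)e^{-x_0^2/2-x_0(z-x_0)}$. This $g$ has the same zeros, and $g(x_0)=P_n(x_0)$. The linear term then cancels the cross term $x_0\Re(z-x_0)$ in $|z|^2/2=x_0^2/2+x_0\Re(z-x_0)+|z-x_0|^2/2$. This gives
\[
\log\max_{|z-x_0|=2h}|g|\le \log n+2h^2+O(1),
\]
using the coefficient-localization bound $\sum_i |z|^{2i}/i!\le e^{|z|^2}$.

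Consequently, on $\{\sum\xi_i^2\le n^2\}$,
\[
N_{J_j}\ll \log n+h^2+\log_-|P_n(x_0)|.
\]
Off this event we use the deterministic bound $N_{J_j}\le n$ against its exponentially small probability.

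It remains to bound $\E\log_-^q|P_n(x_0)|$, which is the main obstacle. We use the layer-cake formula
\[
\E\log_-^q|P_n(x_0)|=\int_0^\infty q t^{q-1}\,\P(|P_n(x_0)|\le e^{-t})\,dt.
\]
For $t\le B\log n$ the integrand contributes at most $(B\log n)^q=n^{o(1)}$. For larger $t$ we cannot use \eqref{eqn:2} with $M^{-\theta}$ for arbitrarily large $\theta$, because \cite[Theorem 4.1]{ANgW-var} likely saturates at some lattice-type floor. Instead, on the event $|P_n(x_0)|\le n^{-B}$, which has probability $O(n^{-A})$, we revert to $N_{J_j}\le n$ and bound that contribution by $n^{q-A}=O(1)$. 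This requires only one fixed $B$ from Corollary \ref{thm:smallball:inf} applied to the single point $x_0$. Altogether,
\[
\E N_{J_j}^q\ll (\log n+h^2)^q+O(1)\le n^{o(1)}(1+h^2)^q,
\]
uniformly in $j$, which gives the lemma.
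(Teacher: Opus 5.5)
Your proof is correct and is essentially the paper's argument: Jensen's formula on a disk of radius $\asymp h$ about the midpoint, an $O(\log n+h^2)$ upper bound from Cauchy--Schwarz on $\{\sum_i\xi_i^2\le n^2\}$ after a Gaussian normalization, the one-point small-ball estimate at the center at a fixed polynomial scale $n^{-B}$, and the deterministic bound $N_{J_j}\le n$ on the exceptional event. The differences are cosmetic. Your factor $e^{-x_0^2/2-x_0(z-x_0)}$ is the linearization of the paper's $e^{-z^2/2}$; the two differ by $e^{(z-x_0)^2/2}$, which is $e^{O(h^2)}$ on the disk. You also skip the paper's union bound over all centers, which is indeed unnecessary for a per-$j$ moment bound, and your layer-cake detour is not needed since only the truncation on $\{|P_n(x_0)|\le n^{-B}\}$ is actually used.
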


\begin{proof}
Let \(x_j\) be the midpoint of \(J_j\).  The interval \(J_j\) is contained
in the complex disk \(B(x_j,2h)\).  For the normalized entire function, writing \(z=x+iy\), Cauchy--Schwarz gives

\begin{align*}
|P_n(z)|^2
&=
\left|e^{-z^2/2}\sum_{i=0}^n
\xi_i\frac{z^i}{\sqrt{i!}}\right|^2\le
\left|e^{-z^2/2}\right|^2
\left(\sum_{i=0}^n\xi_i^2\right)
\left(\sum_{i=0}^n\frac{|z|^{2i}}{i!}\right)\\
&\le
e^{-\Re(z^2)}
\left(\sum_{i=0}^n\xi_i^2\right)e^{|z|^2}
\le
\left(\sum_{i=0}^n\xi_i^2\right)e^{2(\Im z)^2}.
\end{align*}

On \(\sum_i\xi_i^2\le n^2\),
\[
\log\sup_{B(x_j,4h)}|P_n|\le C(\log n+h^2).
\]
By \cite[Theorem 4.1]{ANgW-var}, for every large fixed \(D\),
\[
\P(|P_n(x_j)|\le n^{-D})\le n^{-D'}
\]
with \(D'\) as large as desired after increasing \(D\) (this follows by
writing \(n^{-D}=M^{-C}\) and using \(M\ge n^{\sigma_*}\)).
A union bound over \(O(M/h)\) interval centers and Jensen's formula for complex zeroes count (see for instance \cite[Section 8]{ONgV}) therefore
give, with overwhelming probability,
\[
N_{J_j}\le C_D(\log n+h^2)
\]
simultaneously for all \(j\).  

On the exceptional event we use
\(N_{J_j}\le n\).  The stated moment bound then follows by 
\[ |Y_{j}|^q = |N_{J_j}-\E N_{J_j}|^{q} \le 2^{q-1}(N_{J_j}^q+(\E N_{J_{j}})^q).\]
\end{proof}

\subsection{Cluster expansion} Recall that $k$ is the moment order we will be taking in our proof using the moment method. For a \(k\)-tuple of interval indices \(\mathbf j=(j_1,\dots,j_k)\), we join two
positions whenever their interval indices differ by at most one. We will take the connected
components and call them the {\it clusters} of \(\mathbf j\).

In the first part of our moment computation we single out two sources of negligible contribution. 

\begin{lemma}[singleton clusters]\label{lemma:singleton}
The total contribution to
\[
\E Z_M^k=\sum_{j_1,\dots,j_k}\E Y_{j_1}\cdots Y_{j_k}
\]
from tuples containing a singleton cluster is \(o(M^{k/2})\).
\end{lemma}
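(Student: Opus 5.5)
The plan is to show that each individual term with a singleton cluster is tiny, then count such tuples crudely. Fix a tuple \(\mathbf j=(j_1,\dots,j_k)\) with a singleton cluster, say at position \(1\). By definition, \(|j_1-j_m|\ge2\) for all \(m\ne1\). Group the positions into \(C_1=\{j_1\}\) and \(C_2=\{j_m:m\ne1\}\); these index collections satisfy the separation hypothesis of Proposition \ref{prop:radial-independence}. Apply the centered version with \(H_1=Y_{j_1}\) and \(H_2=\prod_{m\ne1}Y_{j_m}\) (degree at most \(k\)). This gives
\[
\E\, Y_{j_1}\cdots Y_{j_k}=\E Y_{j_1}\,\E\Big(\prod_{m\ne1}Y_{j_m}\Big)+O(n^{-A})=O(n^{-A}),
\]
because \(\E Y_{j_1}=0\). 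Here \(A\) is arbitrary and fixed in advance. The implicit constant depends only on \(k\) and \(A\), since the proposition is uniform over such configurations: the exceptional event of Lemma \ref{lemma:radial-localization} is the same for all \(j\), and the bound \(N_j\le n\) is deterministic.

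Next I count tuples. The number of intervals \(J_j\) is \(O(M/h)\le O(\sqrt n)\), so there are at most \(O(n^{k/2})\) tuples \(\mathbf j\) in total. Summing the per-tuple bound over all tuples containing a singleton cluster gives a total contribution of \(O(n^{k/2-A})\). Choosing \(A>k/2+1\), this is \(O(n^{-1})\), which is \(o(M^{k/2})\) since \(M\ge n^{\sigma_*}\to\infty\). No moment bounds from Lemma \ref{lemma:local-moments} are needed here; they matter only for clusters of size at least three.

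The only point requiring care is that Proposition \ref{prop:radial-independence} must be applied with a single uniform error over all \(O(n^{k/2})\) configurations. This follows from its proof, because the transfer error \(2^s n^d\P(\mathcal E)\) depends only on \(s\le k\), \(d\le k\), and the global event \(\mathcal E\). One also needs that repeated indices inside \(C_2\) are allowed, since monomials may contain powers; the proposition explicitly permits this. If one prefers to avoid any uniformity subtlety, an alternative is to work directly with the localized counts \(\widetilde N_j\). There the factorization \eqref{ind:1} is exact, and \(\E\widetilde Y_{j_1}\) differs from \(0\) by \(O(n^{1-A'})\) via \eqref{ind:2}.
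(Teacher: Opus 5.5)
Your proposal is correct and matches the paper's proof: factor off the singleton via Proposition \ref{prop:radial-independence}, use $\E Y_{j}=0$, and absorb the resulting $O(n^{-A})$ error against the polynomially many tuples by taking $A$ large. The paper factors over all clusters while you split into the singleton versus the rest, which is an equally valid grouping. Your explicit check that the proposition's error is uniform over tuples, because $\mathcal E$ is a single global event and $N_j\le n$ deterministically, is a welcome detail that the paper leaves implicit.
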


\begin{proof}
We apply Proposition \ref{prop:radial-independence} to the separated clusters.
The factor corresponding to a singleton cluster $\{j\}$ is \(\E Y_j=0\).  Hence each such
expectation is \(O(n^{-A})\) for arbitrary \(A\).  For fixed \(k\), the number of $k$-tuples is
\[
O_k\big((M/h)^k\big),
\]
which is polynomial in \(n\).  Since \(A\) is arbitrary, choosing it
sufficiently large makes the total contribution negligible.
\end{proof}

\begin{lemma}[clusters of size at least three]\label{lemma:large-cluster}
The total contribution from tuples having no singleton cluster but at least one
cluster of size at least three is \(o(M^{k/2})\).
\end{lemma}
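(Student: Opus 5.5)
We count configurations and bound each expectation with Hölder's inequality and Lemma \ref{lemma:local-moments}. Fix a $k$-tuple $\mathbf j$ with no singleton cluster and at least one cluster of size at least three. Let $m$ be the number of clusters. Every cluster has size at least $2$ and one has size at least $3$, so $k\ge 2(m-1)+3$, i.e.
\[
m\le \frac{k-1}{2}.
\]

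\textbf{Counting tuples.} Here \emph{size} means the number of positions in a cluster. A cluster is determined by an anchor index, at most $O(M/h)$ choices, together with the offsets of its positions relative to the anchor. Since the cluster is connected under the relation $|j-\ell|\le1$, every offset is $O(k)$. Hence the number of tuples with a given partition of the $k$ positions into $m$ clusters is $O_k((M/h)^m)$. The number of partitions of $\{1,\dots,k\}$ depends only on $k$. So the total number of such tuples is $O_k((M/h)^{(k-1)/2})$.

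\textbf{Bounding each expectation.} By Hölder's inequality and Lemma \ref{lemma:local-moments} with $q=k$,
\[
|\E Y_{j_1}\cdots Y_{j_k}|\le \prod_{i=1}^k(\E|Y_{j_i}|^k)^{1/k}\le n^{o(1)}(1+h^2)^k.
\]
Factoring through Proposition \ref{prop:radial-independence} is unnecessary here: the crude bound already suffices.

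\textbf{Combining.} The total contribution is therefore at most
\[
n^{o(1)}\,M^{(k-1)/2}\,h^{2k}=n^{o(1)}M^{k/2}\cdot M^{-1/2+2k\gamma}.
\]
By \eqref{eqn:gamma}, $2k\gamma<\frac{2k}{10(3k+1)}<\frac{1}{15}$, so $M^{-1/2+2k\gamma}\le M^{-2/5}$. Since $M\ge n^{\sigma_*}$, the factor $n^{o(1)}$ is $M^{o(1)}$, and the contribution is $o(M^{k/2})$. This proves the lemma.

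\textbf{Main point to check.} The only delicate step is confirming that the $n^{o(1)}$ loss in Lemma \ref{lemma:local-moments} is really subpolynomial in $M$. That lemma gives $N_{J_j}\le C(\log n+h^2)$ with overwhelming probability, and on the exceptional event it uses $N_{J_j}\le n$ with probability $n^{-D'}$, where $D'$ can be taken large. So $\E|Y_j|^q\ll(\log n+h^2)^q+n^{q-D'}$, which is $M^{o(1)}h^{2q}$ because $\log n\ll \log M$. The gain $M^{-1/2}$ from the missing half-cluster therefore beats every loss.
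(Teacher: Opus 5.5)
Your proof is correct and is essentially the paper's argument: at most $(k-1)/2$ clusters, hence $O_k\big((M/h)^{(k-1)/2}\big)$ tuples, each bounded by $n^{o(1)}h^{2k}$ via H\"older and Lemma~\ref{lemma:local-moments}, with $n^{o(1)}=M^{o(1)}$ because $M\ge n^{\sigma_*}$. The only difference is that the paper first factors the expectation over clusters using Proposition~\ref{prop:radial-independence} and then applies H\"older cluster by cluster, whereas you apply H\"older to all $k$ factors at once. As you note, this gives the same bound, so the factorization is dispensable here and the entire gain comes from the count of cluster locations.
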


\begin{proof}
Because there is no singleton and there is one
cluster of size at least three, such a tuple has at most \((k-1)/2\) clusters. This bound is crucial in our estimate.  

Since there are
\(O(M/h)\) interval indices, the number of possible cluster locations is
\[
O_k\big((M/h)^{(k-1)/2}\big),
\]
where for each cluster location there are only \(O_k(1)\) relative choices.

 Let \(\mathcal C_1,\ldots,\mathcal C_s\) denote the clusters of the
tuple \((j_1,\ldots,j_k)\), and put \(m_a=|\mathcal C_a|\). Since
distinct clusters are separated by at least two interval indices,
Proposition \ref{prop:radial-independence} gives, for every fixed \(A>0\),
\[
\E\prod_{\nu=1}^kY_{j_\nu}
=
\prod_{a=1}^s
\E\prod_{\nu\in\mathcal C_a}Y_{j_\nu}
+O(n^{-A}).
\]
For each cluster, H\"older's inequality and Lemma
\ref{lemma:local-moments}, applied to the individual intervals rather
than to their union, give
\[
\begin{aligned}
\left|
\E\prod_{\nu\in\mathcal C_a}Y_{j_\nu}
\right|
&\le
\prod_{\nu\in\mathcal C_a}
\left(\E|Y_{j_\nu}|^{m_a}\right)^{1/m_a}\le
n^{o(1)}(1+h^2)^{m_a}.
\end{aligned}
\]
Consequently, since \(\sum_{a=1}^s m_a=k\),
\[
\left|
\E\prod_{\nu=1}^kY_{j_\nu}
\right|
\le n^{o(1)}(1+h^2)^k+O(n^{-A})
\le n^{o(1)}h^{2k}.
\]
Putting together, the total contribution from tuples having no singleton cluster but at least one
cluster of size at least three is bounded by
\[
(M/h)^{(k-1)/2} n^{o(1)}h^{2k}
=
M^{(k-1)/2+\gamma(3k+1)/2+o(1)}
=o(M^{k/2})
\]
by the choice of \(\gamma\) from \eqref{eqn:gamma}.
\end{proof}
We next turn to the main term, that all the clusters have size two (and hence $k$ is even).

\begin{lemma}[Pair clusters]\label{lemma:pair-clusters}
If $k=2q$, the total contribution of tuples whose clusters all have
size two is
\[
(2q-1)!!\,\Var(Z_M)^q+o(M^q).
\]
If $k$ is odd, no such tuples exist.
\end{lemma}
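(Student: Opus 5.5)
For odd $k$ the statement is trivial, since clusters all of size two partition $k$ positions. For $k=2q$, I would first enumerate the tuples by their pairings. Each tuple whose clusters all have size two determines a perfect matching $\pi$ of $\{1,\dots,2q\}$, with the two positions in each block having interval indices at distance at most one. Different blocks have indices separated by at least two. By Proposition \ref{prop:radial-independence},
\[
\E\prod_{\nu}Y_{j_\nu}=\prod_{\{\nu,\nu'\}\in\pi}\E Y_{j_\nu}Y_{j_{\nu'}}+O(n^{-A}).
\]
The number of tuples is polynomial in $n$, so the $O(n^{-A})$ error sums to something negligible. The contribution is therefore
\[
\sum_{\pi}\ \sideset{}{'}\sum_{(u_1,v_1),\dots,(u_q,v_q)}\ \prod_{b=1}^q \E Y_{u_b}Y_{v_b},
\]
where the outer sum runs over the $(2q-1)!!$ matchings, each $|u_b-v_b|\le1$, and the primed sum requires $|u_b-u_{b'}|\ge 2$ and the like for $b\ne b'$. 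Write $c(u,v)=\E Y_uY_v$ for $|u-v|\le 1$ and set $S=\sum_{|u-v|\le1}c(u,v)$.

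Next I would remove the separation constraint. Without it, the inner sum for each $\pi$ equals $S^q$. The discarded terms are those where some two pairs have indices within distance one. For such configurations there are at most $q-1$ free "locations", so their number is $O_k((M/h)^{q-1})$. Each term is bounded, via Cauchy--Schwarz and Lemma \ref{lemma:local-moments}, by $n^{o(1)}h^{4q}$. The total error is then $M^{q-1+O(\gamma q)+o(1)}=o(M^q)$ by \eqref{eqn:gamma}, using $M\ge n^{\sigma_*}$ to absorb the $n^{o(1)}$ factor. This leaves $(2q-1)!!\,S^q+o(M^q)$.

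The remaining step, which I expect to be the main issue, is to identify $S$ with $\Var(Z_M)=\sum_{u,v}c(u,v)$. The difference is $\sum_{|u-v|\ge2}\E Y_uY_v$. By Proposition \ref{prop:radial-independence} with two singleton collections, each such term equals $\E Y_u\,\E Y_v+O(n^{-A})=O(n^{-A})$. Summing over $O((M/h)^2)$ pairs gives $S=\Var(Z_M)+O(n^{-A+1})$. Finally, $S^q=\Var(Z_M)^q+o(M^q)$, because $\Var(Z_M)\asymp M$ by \eqref{eq:var-scale} and the error is polynomially tiny. The only care needed is to check that the crude bounds in the second step beat $M^q$ with the chosen $\gamma$; this is the same exponent bookkeeping as in Lemma \ref{lemma:large-cluster}.
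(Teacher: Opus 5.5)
Your proposal is correct and takes essentially the same route as the paper. You factor each pair-cluster tuple via Proposition~\ref{prop:radial-independence}, replace the mutually separated sum by $S^q$ at a cost of $O\bigl((M/h)^{q-1}\bigr)\cdot n^{o(1)}h^{4q}=o(M^q)$, and identify $S$ with $\Var(Z_M)$ by applying the same proposition to the pairs with $|u-v|\ge 2$. Your remark that $M\ge n^{\sigma_*}$ is what allows the $n^{o(1)}$ factor to be absorbed into a power of $M$ is a detail the paper leaves implicit.
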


\begin{proof}
Recall that in the expansion
\[
\E Z_M^{2q}
=
\sum_{i_1,\ldots,i_{2q}}
\E\bigl(Y_{i_1}\cdots Y_{i_{2q}}\bigr),
\]
the indices $i_1,\ldots,i_{2q}$ are the indices of the spatial intervals.
For a pair-cluster tuple, the $2q$ positions
$\{1,\ldots,2q\}$ are partitioned into $q$ pairs according to which
two factors belong to the same cluster.  


Fix a pairing
\[
\pi=\{\{a_1,b_1\},\ldots,\{a_q,b_q\}\}
\]
of $\{1,\ldots,2q\}$.  We first sum over pair-cluster tuples whose
induced pairing is $\pi$.

For such a tuple, write
\[
i_r:=i_{a_r},\qquad j_r:=i_{b_r},
\]
for the interval indices corresponding to the pair $\{a_r,b_r\}$.
Since each cluster has size two, we have
$|i_r-j_r|\le1$, and the $q$ pairs $(i_r,j_r)$ are mutually separated (i.e. they are of distance at least two).
By Proposition \ref{prop:radial-independence},
\[
\E\prod_{r=1}^q Y_{i_r}Y_{j_r}
=
\prod_{r=1}^q \E(Y_{i_r}Y_{j_r})+O(n^{-A})
\]
for arbitrary $A$.  After summing, the total error is negligible.
Hence the contribution associated with $\pi$ equals
\begin{equation}\label{eqn:pi}
\sum_{\substack{|i_r-j_r|\le1\\
 (i_1,j_1),\ldots,(i_q,j_q)\ {\rm mutually\ separated}}}
\prod_{r=1}^q \E(Y_{i_r}Y_{j_r})
+o(M^q).
\end{equation}

Set
\[
S:=\sum_{|i-j|\le1}\E(Y_iY_j).
\]
Without the mutual-separation restriction, the sum in \eqref{eqn:pi} is exactly
$S^q$.  We claim that the terms involving not mutual separation contribute
$o(M^q)$. The idea is similar to the proof of Lemma \ref{lemma:large-cluster}. Indeed, if two of the selected local pairs are adjacent, then the
configuration has at most $q-1$ freely chosen cluster locations.  Since
there are $O(M/h)$ intervals, the number of such choices is
\[
O_q((M/h)^{q-1}).
\]
Furthermore, by Cauchy--Schwarz and Lemma \ref{lemma:local-moments},
\[
|\E(Y_iY_j)|
\le (\E Y_i^2\,\E Y_j^2)^{1/2}
\le n^{o(1)}h^4.
\]
Thus the total contribution of the excluded choices is at most
\[
(M/h)^{q-1} n^{o(1)}h^{4q}
=
n^{o(1)}M^{q-1}h^{3q+1}
=o(M^q),
\]
for our choice of $h=M^\gamma$ with $\gamma>0$ sufficiently small as in \eqref{eqn:gamma}.

Putting together, the contribution corresponding to $\pi$ is
\[
S^q+o(M^q).
\]

Finally, Proposition \ref{prop:radial-independence} also gives
\[
\sum_{|i-j|\ge2}\E(Y_iY_j)=o(M),
\]
and hence
\[
S
=
\sum_{i,j}\E(Y_iY_j)+o(M)
=
\Var(Z_M)+o(M).
\]
Since $\Var(Z_M)=\Theta(M)$,
\[
S^q=\Var(Z_M)^q+o(M^q).
\]
There are $(2q-1)!!$ pairings $\pi$ of the $2q$ positions
$\{1,\ldots,2q\}$, which proves the even case.
The odd case is immediate.
\end{proof}

We now conclude our first main theorem.

\begin{proof}[Proof of Theorem \ref{thm:main}]
Since \(Z_M=N_{I_W}-\E N_{I_W}\), \eqref{eq:var-scale} gives
\[
\Var(Z_M)\asymp M.
\]
For each fixed \(k=2q\) or $k=2q+1$, the preceding three lemmas imply
\[
\E Z_M^{2q}
=(2q-1)!!\,\Var(Z_M)^q+o(M^q)
\]
and
\[
\E Z_M^{2q+1}=o(M^{q+1/2}).
\]
Dividing by the appropriate power of \(\Var(Z_M)\), all moments converge to
the corresponding standard Gaussian moments.  We have thus shown
\[
\frac{N_{I_W}-\E N_{I_W}}{\sqrt{\Var(N_{I_W})}}
\xrightarrow{d}\BN(0,1).
\]
\end{proof}

\section{The enlarged positive interval}
\label{section:full}

We now treat the positive interval \([0,\sqrt n]\).  The
anti-concentration theorem quoted directly from \cite{ANgW-var} is stated on
multiplicative intervals
\[
[c_1M,c_2M]\subset[0,\sqrt n-M].
\]
For the present extension we need the same one-point estimate up to a (small)
polynomial distance from the hard edge.  

Fix
\[
0<\varepsilon_b<\frac14
\]
and define the expanding bulk
\begin{align}
\label{eq:expbulk}B_n:=[n^{\varepsilon_b},\,\sqrt n-n^{\varepsilon_b}].
\end{align}
For convenience, we also set 
$$L_n:=\sqrt n.$$

We will focus our main analysis on this set, while for $B_n^c\cap[0, \sqrt{n}] $ will be easily dealt with via a standard $L^2$-bound (where as usual for any random variable $X$ of bounded second moment, $\|X\|_2 = \sqrt{\E(X^2)}$).

We start with following analog of Corollary \ref{thm:smallball:inf} over $B_n$.

\begin{proposition}[near-edge uniform nondegeneracy]
\label{prop:near-edge}
For every \(A>0\) there exists \(B>0\) such that
\begin{equation}
\P\left(
\inf_{x\in B_n}
\bigl(|P_n(x)|+|P_n'(x)|\bigr)\le n^{-B}
\right)
=O(n^{-A}).
\end{equation}
Moreover, for every deterministic collection of at most \(n^{O(1)}\)
points \(a\in B_n\), after increasing \(B\) if necessary,
\begin{equation}
\label{eq:ne-u-nond}\P\left(\min_a|P_n(a)|\le n^{-B}\right)=O(n^{-A}).
\end{equation}
\end{proposition}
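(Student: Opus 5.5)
The plan is to cover the expanding bulk \(B_n\) by a bounded-overlap family of multiplicative intervals of the form used in Proposition \ref{prop:ANgW-nondegeneracy}. Then I will apply that proposition on each piece and take a union bound.

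Concretely, take dyadic scales \(M_\ell=2^\ell\) with \(n^{\varepsilon_b}/4\le M_\ell\le \sqrt n\). The number of scales is \(O(\log n)\). For the part of \(B_n\) with \(x\le \sqrt n/2\), the intervals \([M_\ell,2M_\ell]\) satisfy the hypothesis \([c_1M,c_2M]\subset[0,\sqrt n-M]\) with \(c_1=1,c_2=2\) and \(M=M_\ell\le \sqrt n/3\). Since \(M_\ell\ge n^{\varepsilon_b}/4\), estimate \eqref{eqn:1} with \(\theta\) large gives failure probability at most \(M_\ell^{-\theta+1}\le n^{-\varepsilon_b(\theta-1)+o(1)}\) at threshold \(M_\ell^{-\theta}\ge n^{-\theta}\). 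Choosing \(\theta\) with \(\varepsilon_b(\theta-1)>A+1\) and \(B\ge\theta\), the union over \(O(\log n)\) scales gives \(O(n^{-A})\).

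The delicate region is near the hard edge, \(x\in[\sqrt n/2,\sqrt n-n^{\varepsilon_b}]\). Here the constraint \(c_2M\le \sqrt n-M\) forbids a single multiplicative interval with \(M\asymp\sqrt n\) reaching within \(n^{\varepsilon_b}\) of \(\sqrt n\). The distance \(d=\sqrt n-x\) plays the role of \(M\), and I will take that as the working hypothesis. I would first check whether the argument of \cite[Theorem 4.4]{ANgW-var} uses only two inputs: the coefficient localization \(i=x^2+O(x)\) with at least \(\gg d\sqrt{x}\)-many (in fact \(\gg x\)) unthrottled coefficients below index \(n\), and the Littlewood--Offord/Esseen type bound for the two-dimensional walk \((P_n(x),P_n'(x))\) with roughly \(\min(x,\,x\,d)\) effective coefficients.

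For \(x=\sqrt n-d\) with \(d\ge n^{\varepsilon_b}\), one has \(n-x^2\asymp d\sqrt n\ge x\,n^{\varepsilon_b}\). So the window \(|i-x^2|\le Rx\) lies inside \([0,n]\) once \(R\le n^{\varepsilon_b}\). The coefficient profile near \(x^2\) is therefore essentially that of the untruncated Weyl function, and the truncation error is \(e^{-cn^{2\varepsilon_b}}\) by Lemma \ref{lemma:weyl-C1-tail}. Thus I would rerun the small-ball argument of \cite{ANgW-var} on the intervals \(J=[x_0-\sqrt n^{\,\cdot}\,,x_0+\dots]\) of length \(\asymp n^{\varepsilon_b}\) (or of length \(\sqrt n\) split into \(n^{O(1)}\) pieces). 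The one-point anti-concentration of \((P_n(x),P_n'(x))\) at scale \(n^{-B}\), which holds with probability \(1-n^{-\Omega(B)}\) since it depends only on the local coefficient profile, is combined with a net of mesh \(n^{-2B}\) and the Lipschitz bound \(|P_n''|\le n^{O(1)}\) on \(\sum\xi_i^2\le n^2\). Union bounding over the \(n^{O(B)}\) net points then requires the one-point bound to decay faster than \(n^{-O(B)}\). This is exactly the issue that \cite[Theorem 4.4]{ANgW-var} resolves with its \(M^{-\theta+1/2+\varepsilon}\) loss.

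The main obstacle is this uniform step near the edge: justifying that the proof of \cite[Theorem 4.4]{ANgW-var} goes through with \(M\) replaced by the local scale while \(x\asymp\sqrt n\). I would handle it by recentering, i.e. applying the inequality to the shifted function, so that the relevant arithmetic (non-lattice) structure of \(a_i(x)\) only involves indices within \(O(x\sqrt{\log n})\) of \(x^2\), which remain below \(n\).

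The second statement \eqref{eq:ne-u-nond} is easier. For each deterministic point \(a\in B_n\), the one-dimensional bound \cite[Theorem 4.1]{ANgW-var} applies by the same locality: only indices \(|i-a^2|\le Ra\subset[0,n]\) matter, and the truncated tail is negligible. This gives \(\P(|P_n(a)|\le n^{-B})\le n^{-\sigma B}\) for some \(\sigma=\sigma(\varepsilon_b)>0\). A union bound over \(n^{O(1)}\) points, after increasing \(B\), yields \(O(n^{-A})\).
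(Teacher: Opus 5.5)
Your proposal follows the paper's proof. The paper also covers $B_n$ by $O(\log n)$ multiplicative windows with $M\asymp x$ and checks that $n-x^2=(\sqrt n-x)(\sqrt n+x)\ge n^{\varepsilon_b}x\gg Lx$. This keeps the whole coefficient-index window of the Diophantine/characteristic-function argument of \cite{ANgW-var}, including its finite-difference shifts, below $n$, so the pointwise two-dimensional small-ball bound and the covariance estimate survive near the edge; the paper then reruns the net argument of \cite[Theorem 4.4]{ANgW-var} and takes a union bound over the deterministic points.

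Two things to tidy.
\begin{enumerate}
\item The scale stays $M\asymp x\asymp\sqrt n$ throughout. The edge distance $d$ never replaces $M$; it enters only through that room condition, so no recentering is needed.
\item Your net should have mesh $n^{-B}$ divided by the Lipschitz constant, not $n^{-2B}$. It is the quadratic decay $O(n^{-2B})$ of the two-dimensional estimate, against $n^{B+O(1)}$ net points, that gives $n^{-B+O(1)}$. Mesh $n^{-2B}$ would leave a useless bound of order $n^{1/2}$.
\end{enumerate}
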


\begin{proof}
See Appendix \ref{appendix:near-edge-smallball} for a more detailed treatment, where we follow the arguments of \cite{ANgW-var} very closely. \add{The only new point relative to \cite{ANgW-var} is that, under \eqref{eq:near-edge-range}, \[n-x^2=(\sqrt n-x)(\sqrt n+x) \ge M^{\varepsilon_g}x \gg Lx\]
for every fixed $L>0$. Consequently, the entire local coefficient-index window used in the Diophantine argument, including its finite-difference shifts, remains below the truncation index $n$; once this is checked, the characteristic-function and Esseen arguments of \cite{ANgW-var} apply unchanged. }
\end{proof}
In what follows we proceed as in Subsection \ref{sub:moments}.
\subsection{Localization and the moment expansion on the expanding bulk}
 We partition \(B_n\) into consecutive
intervals \(J_j\) of length
\[
h=L_n^\gamma,
\]
where \(\gamma>0\) is chosen sufficiently small depending on the fixed
moment order $k$.

The proofs of Lemma \ref{lemma:radial-localization} and Proposition
\ref{prop:radial-independence} remain valid uniformly on \(B_n\).  Indeed,
for \(x\ge n^{\varepsilon_b}\), an interval of length \(h\) has a coefficient
window of width
\[
O(x\sqrt{\log n})
\]
around the squared interval \(\{u^2:u\in J_j\}\), while two nonadjacent
intervals have their squared intervals separated by \(\gg xh\).  Since
\[
h/\sqrt{\log n} \to \infty,
\]
the coefficient windows of nonadjacent intervals are disjoint.  \add{The hypothesis of Proposition }
\ref{prop:near-edge} supplies the uniform \(C^1\)-stability margin needed in Lemma \ref{lemma:zero-homotopy}.

Let
\[
Y_j=N_{J_j}-\E N_{J_j},
\qquad
Z_n:=\sum_jY_j
=
N_{B_n}-\E N_{B_n}.
\]

\note{What do we mean here by conditional on 4.1? That it holds Modulo 4.1}

\begin{proposition}[Gaussian moments on the expanding bulk, conditional on
Proposition \ref{prop:near-edge} ]
\label{prop:expanding-bulk-moments}
For every fixed integer \(q\ge1\),
\begin{equation}\label{eqn:2q}
    \E Z_n^{2q}
=(2q-1)!!\,\Var(Z_n)^q+o(L_n^q),
\end{equation}
and
\begin{equation}\label{eqn:2q+1}
\E Z_n^{2q+1}=o(L_n^{q+1/2}).
\end{equation}
\end{proposition}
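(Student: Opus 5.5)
The plan is to rerun the moment argument of Section \ref{section:CLT} verbatim on $B_n$. The scale $M$ is replaced by $L_n=\sqrt n$, and Corollary \ref{thm:smallball:inf} is replaced by Proposition \ref{prop:near-edge}. Fix the moment order $k$ and take $0<\gamma<\frac{1}{10(3k+1)}$ with $h=L_n^\gamma$. The number of intervals $J_j$ is then $O(L_n/h)$, which is polynomial in $n$.

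\textbf{Step 1: localization and independence on $B_n$.} First I check Lemma \ref{lemma:radial-localization} on $B_n$.
\begin{itemize}
\item For $x\ge n^{\varepsilon_b}$ we have $Rx\le x^{4/3}$, so Lemma \ref{lemma:weyl-C1-tail} applies with $R=K\sqrt{\log n}$; its proof only uses $x$ large and $x\le\sqrt n$, so this extension has to be checked too.
\item The squared images of nonadjacent intervals are separated by at least $2r_jh$. The windows $W_j$ have width $R(r_j+h)\le 2Rr_j$, and $h\gg\sqrt{\log n}$, so nonadjacent windows are disjoint.
\item The $C^1$ margin for Lemma \ref{lemma:zero-homotopy}, both the infimum over $B_n$ and the endpoint bound at the $O(L_n/h)$ interval endpoints, comes from Proposition \ref{prop:near-edge}.
\end{itemize}
Proposition \ref{prop:radial-independence} then follows word for word. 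Its proof uses only disjointness of the windows, the bound $N_{J_j}\le n$, and an exceptional event of probability $O(n^{-A'})$.

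\textbf{Step 2: local moments.} Next I redo Lemma \ref{lemma:local-moments}, obtaining $\E|Y_j|^q\le n^{o(1)}(1+h^2)^q$ uniformly in $j$.
\begin{itemize}
\item The upper bound $|P_n(z)|^2\le (\sum_i\xi_i^2)e^{2(\Im z)^2}$ holds for every $z$, regardless of truncation. So on $\{\sum\xi_i^2\le n^2\}$, $\log\sup_{B(x_j,4h)}|P_n|\le C(\log n+h^2)$.
\item The lower bound $|P_n(x_j)|>n^{-D}$ at all centers holds with overwhelming probability by \eqref{eq:ne-u-nond}.
\item Jensen's formula then gives $N_{J_j}\le C_D(\log n+h^2)$ simultaneously for all $j$, and $N_{J_j}\le n$ handles the exceptional event.
\end{itemize}
This is the step where I expect the only real care to be needed: the uniformity of the one-point small-ball bound as $x_j$ approaches either end of $B_n$. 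That uniformity is exactly what Proposition \ref{prop:near-edge} supplies, so everything is conditional on it.

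\textbf{Step 3: cluster expansion.} The three cluster lemmas go through with $M\to L_n$.
\begin{itemize}
\item \emph{Singleton clusters} contribute $O(n^{-A})$ per tuple, hence $o(L_n^{k/2})$ in total.
\item \emph{Clusters of size at least three (no singletons).} There are at most $(k-1)/2$ clusters. Combining the count $(L_n/h)^{(k-1)/2}$ with H\"older and Step 2, the contribution is at most $L_n^{(k-1)/2+\gamma(3k+1)/2+o(1)}=o(L_n^{k/2})$.
\item \emph{All clusters of size two ($k=2q$).} The non-separated configurations contribute $(L_n/h)^{q-1}n^{o(1)}h^{4q}=o(L_n^q)$. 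This leaves $(2q-1)!!\,S^q$ with $S=\sum_{|i-j|\le1}\E Y_iY_j$.
\end{itemize}
By Proposition \ref{prop:radial-independence}, each term $\E Y_iY_j$ with $|i-j|\ge2$ is $O(n^{-A})$, and there are polynomially many of them. Hence $S=\Var(Z_n)+O(n^{-A})$.

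Since $|S|,\Var(Z_n)\le n^{O(1)}$, it follows that $S^q=\Var(Z_n)^q+O(n^{-A+O(q)})$. So the even case \eqref{eqn:2q} holds without knowing the order of $\Var(Z_n)$; the variance lower bound is only needed afterwards, when normalizing. The odd case \eqref{eqn:2q+1} follows because every odd tuple has either a singleton or a cluster of size at least three.
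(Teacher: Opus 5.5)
Your proposal is correct and is essentially the paper's proof. Like the paper, you rerun the localization, the Jensen local-moment bound, and the singleton/large-cluster/pair-cluster expansion of Section~\ref{section:CLT} on $B_n$ with $L_n$ in place of $M$, taking the nondegeneracy input from Proposition~\ref{prop:near-edge}. Your observation that $S=\Var(Z_n)+O(n^{-A})$, hence $S^q=\Var(Z_n)^q+O(n^{-A+O(q)})$ using only the trivial bound $\Var(Z_n)\le n^2$, is a worthwhile small refinement. Lemma~\ref{lemma:pair-clusters} as written invokes $\Var(Z_M)=\Theta(M)$, whereas on $B_n$ no upper bound on $\Var(Z_n)$ is recorded and the lower bound (Lemma~\ref{lemma:expanding-bulk-var-lower}) is only proved afterwards.
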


\begin{proof}
The proof is exactly the cluster argument from Section \ref{section:CLT},
with the number of intervals now \(O(L_n/h)\), and $L_n$ plays the role of $M$.  The local Jensen estimate
remains uniform:
\[
\E|Y_j|^r\le n^{o(1)}(1+h^2)^r
\]
for every fixed \(r\).  Singleton clusters are negligible by approximate
finite-range independence, \add{i.e. Lemma \ref{lemma:singleton}} .  A tuple having no singleton but a cluster of
size at least three has at most \((k-1)/2\) cluster locations and therefore
contributes \(o(L_n^{k/2})\) after choosing \(\gamma\) sufficiently small. \add{ Indeed with our choice of $h=L_n^{\gamma}$
\[ \left(\frac{L_n}{h}\right)^{(k-1)/2} n^{o(1)}
 h^{2k}
 =
 L_n^{(k-1)/2+\gamma(3k+1)/2+o(1)}
 =
 o(L_n^{k/2}).
\]}

Finally, the pair-cluster argument in Lemma \ref{lemma:pair-clusters}
reconstructs \(\Var(Z_n)^q\), with the factor \((2q-1)!!\) coming from the
pairings of the \(2q\) positions in the moment expansion.

\end{proof}
\note{Just a comment. I think mathematically, this new part has to be stated for the sake of proof completeness. }

\subsection{A variance lower bound}
To establish CLT fluctuation of $Z_n$ by the method of moments, we do not need a precise estimate for the variance of $Z_n$, but it will suffice to show that
\[
\Var(Z_n)\gg L_n.
\]

\begin{lemma}
\label{lemma:expanding-bulk-var-lower}
\add{Suppose we have $B_n$ as defined in \eqref{eq:expbulk}}, then
\begin{equation}\label{eqn:B_n}
\Var(N_{B_n})\gg\sqrt n.
\end{equation}
\end{lemma}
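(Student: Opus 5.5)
The plan is to reduce \eqref{eqn:B_n} to the bulk variance estimate \eqref{eq:var-scale}. Approximate finite-range independence on $B_n$ (Lemma \ref{lemma:radial-localization} and Proposition \ref{prop:radial-independence}, valid uniformly on $B_n$ by Proposition \ref{prop:near-edge}) will be used to show that splitting $B_n$ into a few consecutive pieces changes the variance only by a small boundary term. We do not need to control the variance of the pieces near the edges, since a variance is always nonnegative.

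\emph{Step 1: a macroscopic bulk piece.} Fix $M:=\lfloor \sqrt n/4\rfloor$, $c_1=1/2$, $c_2=1$. Then
\[
D_1:=[c_1M,c_2M]\subset[0,\sqrt n-M],
\]
and $D_1\subset B_n$ for large $n$, since $\varepsilon_b<1/4$. We choose the partition of $B_n$ into intervals $J_j$ of length $h=L_n^\gamma$ so that both endpoints of $D_1$ are partition points. This is allowed, because we may permit one shorter interval on each side of each endpoint; the localization and moment bounds of Lemma \ref{lemma:local-moments} only use that the intervals have length at most $h$. Put $D_0:=B_n\cap[0,c_1M)$ and $D_2:=B_n\cap(c_2M,\infty)$, so $N_{B_n}=N_{D_0}+N_{D_1}+N_{D_2}$, each being a sum of the corresponding $N_{J_j}$. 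By \eqref{eq:var-scale} applied to $I_W=D_1$ (here $M\ge n^{\sigma_*}$ for any $\sigma_*<1/2$),
\[
\Var(N_{D_1})\asymp M\asymp\sqrt n .
\]

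\emph{Step 2: the cross terms.} Expand
\[
\Var(N_{B_n})=\sum_{l}\Var(N_{D_l})+2\sum_{l<l'}\Cov(N_{D_l},N_{D_{l'}}),
\]
and write each covariance as $\sum_{i\in D_l,\,j\in D_{l'}}\E(Y_iY_j)$. For $|i-j|\ge2$, Proposition \ref{prop:radial-independence} (centered version, $s=2$, degree one) gives $\E(Y_iY_j)=\E Y_i\,\E Y_j+O(n^{-A})=O(n^{-A})$. There are only polynomially many such pairs, so these terms contribute $O(n^{-A+O(1)})=o(1)$. The remaining pairs with $|i-j|\le1$ lying in different blocks occur only at the two junction points, so there are $O(1)$ of them. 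By Cauchy--Schwarz and Lemma \ref{lemma:local-moments}, each is at most $n^{o(1)}h^4=n^{o(1)+2\gamma}$. Hence
\[
\Var(N_{B_n})\ge \Var(N_{D_1})-O\big(n^{2\gamma+o(1)}\big)-o(1)\gg\sqrt n,
\]
as soon as $\gamma<1/4$, after discarding the nonnegative terms $\Var(N_{D_0})$ and $\Var(N_{D_2})$.

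\emph{Main obstacle.} The delicate point is Step 1: making sure the imported variance asymptotic \cite[Theorem 1.12]{ANgW-var} applies to $D_1$ with endpoints aligned to the grid of $J_j$'s. The cleanest fix is the one above, namely adapting the partition to $D_1$ rather than $D_1$ to the partition, so that $c_1,c_2$ stay fixed constants. A second point to check is that the event where localization fails, from Lemma \ref{lemma:radial-localization}, is handled uniformly on $B_n$ with Proposition \ref{prop:near-edge} in place of Corollary \ref{thm:smallball:inf}. This is exactly what was already asserted before Proposition \ref{prop:expanding-bulk-moments}. Everything else is bookkeeping with arbitrarily large $A$.
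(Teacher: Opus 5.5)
Your argument is essentially the paper's: a macroscopic bulk window whose variance is $\asymp\sqrt n$ by \cite[Theorem 1.12]{ANgW-var} is decoupled from the rest of $B_n$ by approximate finite-range independence, the $O(1)$ boundary intervals are controlled through Lemma \ref{lemma:local-moments}, and the nonnegative variance of the remainder is discarded. The only difference is bookkeeping: the paper does not align the grid and uses a buffer of $O(1)$ intervals with the $L^2$ triangle inequality, whereas you align the grid, expand the covariances, and apply Cauchy--Schwarz at the junctions.

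One remark in your proposal is wrong. The disjointness $W_j\cap W_\ell=\varnothing$ for $|j-\ell|\ge2$ in Lemma \ref{lemma:radial-localization} needs the intermediate interval to have length $\gg\sqrt{\log n}$; "length at most $h$" is not enough. So an arbitrarily short piece inserted at $c_1M$ or $c_2M$ can break independence for the index-distance-two pair straddling it. This is easy to repair: merge short pieces into a neighbour so all lengths lie in $[h,2h]$, or add these $O(1)$ extra pairs to your Cauchy--Schwarz junction terms.
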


\begin{proof}
Fix a macroscopic interval lying strictly inside the ordinary bulk,
for instance
\[
K_n:=\left[\frac14\sqrt n,\frac12\sqrt n\right].
\]
By \cite[Theorem 1.12]{ANgW-var} and the Gaussian Weyl variance asymptotic,
\begin{equation}\label{eqn:K_n}
\Var(N_{K_n})\asymp\sqrt n.
\end{equation}

Use the same interval partition of \(B_n\).  Let \(A_n\) be the centered sum
of those interval counts whose intervals lie inside \(K_n\) and are at interval
distance at least two from the two endpoints of \(K_n\).  Let \(D_n\) be
the centered sum of all intervals lying outside this core and at interval distance
at least two from it, and let \(E_n\) be the sum of the remaining \(O(1)\)
\add{  boundary intervals  within $K_n$ buffering the core}.

\begin{figure}[ht]
\centering
\begin{tikzpicture}[x=1cm,y=1cm,font=\small]
    \draw[thick] (0,0)--(12,0);

    \foreach \x in {0,3,4,8,9,12}
        \draw (\x,-0.12)--(\x,0.12);

    \node[below] at (1.5,-0.12) {$D_n^{-}$};
    \node[below] at (3.5,-0.12) {$E_n^{-}$};
    \node[below] at (6,-0.12) {$A_n$};
    \node[below] at (8.5,-0.12) {$E_n^{+}$};
    \node[below] at (10.5,-0.12) {$D_n^{+}$};

    \draw (3.5,0.65)--(8.5,0.65);
    \draw (3.5,0.57)--(3.5,0.73);
    \draw (8.5,0.57)--(8.5,0.73);
    \node[above] at (6,0.65) {$K_n$};

    \draw (0,-0.85)--(12,-0.85);
    \draw (0,-0.77)--(0,-0.93);
    \draw (12,-0.77)--(12,-0.93);
    \node[below] at (6,-0.85) {$B_n$};
\end{tikzpicture}
\caption{Decomposition of \(B_n\) into the core \(A_n\), the boundary
parts \(E_n^\pm\), and the exterior parts \(D_n^\pm\).}
\label{fig:variance-core-decomposition}
\end{figure}

By Proposition \ref{prop:radial-independence},
\[
\Cov(A_n,D_n)=O(n^{-A})
\]
for arbitrary \(A\), after summing the $n^{O(1)}$ 
many monomials involved in
the covariance.  Hence
\begin{equation}\label{eqn:AD:A}
\Var(A_n+D_n)
=
\Var(A_n)+\Var(D_n)+o(1)
\ge \Var(A_n)+o(1).
\end{equation}

The difference between \(A_n\) and the centered count on \(K_n\) consists
of only \(O(1)\) boundary intervals.  By Lemma
\ref{lemma:local-moments},
\[
\|A_n-(N_{K_n}-\E N_{K_n})\|_2
\le n^{o(1)}h^2 =o(n^{1/4}),
\]
if we choose \(\gamma<1/8\). Hence \eqref{eqn:K_n} implies
\begin{equation}
\Var(A_n)\asymp\sqrt n.\label{eqn:A_n}
\end{equation}
Similarly,
\begin{equation}\label{eqn:E}
    \|E_n\|_2\le n^{o(1)}h^2=o(n^{1/4}).
\end{equation}

Since
\(
Z_n=A_n+D_n+E_n,
\)
the triangle inequality in \(L^2\), together with \eqref{eqn:AD:A},\eqref{eqn:A_n} and \eqref{eqn:E}, give
\[
\sqrt{\Var(Z_n)}
\ge
\sqrt{\Var(A_n+D_n)}-\|E_n\|_2
\ge
cn^{1/4}-o(n^{1/4}).
\]
This proves \eqref{eqn:B_n}.
\end{proof}

\begin{proposition}[CLT on the expanding bulk]
\label{prop:expanding-bulk-CLT}
\add{Suppose $B_n$ is as defined in \eqref{eq:expbulk} and assume the hypothesis of Theorem \ref{thm:main} for the coefficients, then}
\begin{equation}
  \label{eq:expclt}  \frac{N_{B_n}-\E N_{B_n}}
{\sqrt{\Var(N_{B_n})}}
\xrightarrow{d}\BN(0,1).
\end{equation}
\end{proposition}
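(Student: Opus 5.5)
The plan is to run the method of moments, using Proposition \ref{prop:expanding-bulk-moments} and the lower bound of Lemma \ref{lemma:expanding-bulk-var-lower}. Write $Z_n=N_{B_n}-\E N_{B_n}$ as before and set $V_n:=\Var(Z_n)=\Var(N_{B_n})$.

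The first step is an upper bound $V_n=O(L_n)$, since the error terms in \eqref{eqn:2q} and \eqref{eqn:2q+1} are measured in powers of $L_n$ rather than $V_n$. I would get this directly from the partition into intervals $J_j$. Write $V_n=\sum_{i,j}\E(Y_iY_j)$. By Proposition \ref{prop:radial-independence}, pairs with $|i-j|\ge 2$ contribute $O(n^{-A})$ each, hence $o(1)$ in total. The number of pairs with $|i-j|\le 1$ is $O(L_n/h)$, and by Cauchy--Schwarz and Lemma \ref{lemma:local-moments} each contributes at most $n^{o(1)}h^4$. This gives $V_n\le n^{o(1)}L_nh^3$, which is only slightly too weak.

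To sharpen this I would use the crude bound only as an intermediate step and not insist on $V_n=O(L_n)$. The key observation is that all that is really needed is $V_n\gg L_n$, combined with errors that are $o$ of the corresponding power of $L_n$. From Lemma \ref{lemma:expanding-bulk-var-lower} we have $V_n\ge cL_n$, so $V_n^{-q}\le c^{-q}L_n^{-q}$. Dividing \eqref{eqn:2q} by $V_n^q$ gives
\[
\E\big(Z_n/\sqrt{V_n}\big)^{2q}=(2q-1)!!+o(L_n^q)/V_n^q=(2q-1)!!+o(1).
\]
Likewise, dividing \eqref{eqn:2q+1} by $V_n^{q+1/2}$ gives
\[
\E\big(Z_n/\sqrt{V_n}\big)^{2q+1}=o(L_n^{q+1/2})/V_n^{q+1/2}=o(1).
\]
So the upper bound is in fact unnecessary, and the lower bound alone suffices.

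It then remains to conclude. The moments of $Z_n/\sqrt{V_n}$ converge to those of $\BN(0,1)$. The Gaussian law is determined by its moments, so the method of moments yields \eqref{eq:expclt}.

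The only delicate point is that Proposition \ref{prop:expanding-bulk-moments} is applied with $\gamma$ depending on the moment order $k$, so the partition changes with $k$. This is harmless, because $Z_n$ and $V_n$ do not depend on the partition. Even so, I would check this explicitly, since it is the step most likely to hide a gap: the quantity $\Var(Z_n)$ appearing in \eqref{eqn:2q} must be the true variance and not a partition-dependent surrogate. This is indeed the case, because Lemma \ref{lemma:pair-clusters} identifies $S$ with $\Var(Z_n)+o(L_n)$.
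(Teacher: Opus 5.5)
Your argument is correct and is essentially the paper's proof. The paper likewise combines the lower bound $\Var(Z_n)\gg L_n$ from Lemma~\ref{lemma:expanding-bulk-var-lower} with \eqref{eqn:2q}--\eqref{eqn:2q+1}, divides by the appropriate powers of $\sqrt{\Var(Z_n)}$, and concludes by the method of moments; as you observe, the upper bound on $V_n$ is unnecessary, and your check that the $k$-dependent partition leaves $Z_n$ and $V_n$ unchanged is sound.
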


\begin{proof}
It follows directly from  Lemma \ref{lemma:expanding-bulk-var-lower},
\[
\Var(Z_n)\gg L_n.
\]
Divide \eqref{eqn:2q}--\eqref{eqn:2q+1} by the appropriate powers of
\(\sqrt{\Var(Z_n)}\).  The error terms tend to zero, so all normalized
moments converge to the standard Gaussian moments, this proves \eqref{eq:expclt} by the moment method.
\end{proof}

\subsection{Putting back the edge intervals}

Let
\[
R_n
:=
N_{[0,n^{\varepsilon_b})}
+
N_{(\sqrt n-n^{\varepsilon_b},\,\sqrt n]}.
\]
We next show the following.

\begin{lemma}[the omitted edge intervals are \(L^2\)-negligible]
\label{lemma:positive-strips}
We have
\begin{equation}
    \E R_n^2=o(\sqrt n),
\qquad
\|R_n-\E R_n\|_2=o(n^{1/4}).
\end{equation}
\end{lemma}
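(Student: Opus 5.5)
We need $\E R_n^2 = o(\sqrt n)$; the second claim then follows, since $\|R_n-\E R_n\|_2\le \|R_n\|_2=o(n^{1/4})$. By Minkowski it is enough to treat the two strips separately. Write $R_n=R_n^{(0)}+R_n^{(1)}$, where $R_n^{(0)}=N_{[0,n^{\varepsilon_b})}$ and $R_n^{(1)}=N_{(\sqrt n-n^{\varepsilon_b},\sqrt n]}$. The goal is $\E (R_n^{(i)})^2\le n^{2\varepsilon_b+o(1)}$, which is $o(\sqrt n)$ because $\varepsilon_b<1/4$. The heuristic is that each strip has length $n^{\varepsilon_b}$, and zeros have density $O(1)$ there, so the count should be at most $n^{\varepsilon_b+o(1)}$ with overwhelming probability. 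On the exceptional event we use the deterministic bound $N_I\le n$.

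The tool is the Jensen argument from Lemma \ref{lemma:local-moments}. Cover each strip by $O(n^{\varepsilon_b})$ unit intervals with centers $x_c$. Each unit interval lies in the disk $B(x_c,2)$. On the event $\sum_i\xi_i^2\le n^2$, which fails only with exponentially small probability, the Cauchy--Schwarz computation there gives
\[
\log\sup_{B(x_c,4)}|P_n|\le C\log n .
\]
So it suffices to have a one-point lower bound $|P_n(x_c)|\ge n^{-D}$ with probability $1-O(n^{-A})$ at each center. Jensen's formula then bounds the number of zeros in each unit interval by $O_D(\log n)$. Summing over the $O(n^{\varepsilon_b})$ centers gives $R_n^{(i)}\le C n^{\varepsilon_b}\log n$ with overwhelming probability. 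Hence
\[
\E (R_n^{(i)})^2\le C n^{2\varepsilon_b}\log^2 n+n^2\cdot O(n^{-A})=o(\sqrt n).
\]

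The main obstacle is that one-point small-ball estimate at centers outside $B_n$, namely near $0$ and within $n^{\varepsilon_b}$ of $\sqrt n$. Proposition \ref{prop:near-edge} and Corollary \ref{thm:smallball:inf} only cover points in $B_n$ or $I_W$.

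Near $0$, with $x_c\le n^{\varepsilon_b}$, I would first try a crude argument rather than the Diophantine machinery. One option is to condition on all coefficients except one with large weight $|a_i(x_c)|\gg n^{-\varepsilon_b}$. Such a weight exists near $i\approx x_c^2$, since the squared weights sum to roughly $1$ over a window of width $O(x_c+1)$. But a lone $\xi$ such as Rademacher has atoms, so this only gives a bound like $1/2$, not a polynomially small one. So I would instead apply \cite[Theorem 4.1]{ANgW-var} at scale $M\asymp x_c$ for $x_c\ge \log^{C}n$. For smaller $x_c$ I would shrink the cover: use $n^{\varepsilon_b}$ as the lower edge of $B_n$ with a slightly smaller exponent, and bound $N_{[0,n^{\varepsilon'}]}$ by a single Jensen disk around a point where a small-ball estimate is available. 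The disk has radius $O(n^{\varepsilon'})$, so the growth bound becomes $\log\sup\le C(\log n+n^{2\varepsilon'})$, which is still $o(n^{1/4})$ for small $\varepsilon'$.

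Near $\sqrt n$, the Jensen radius $4$ disks stay inside the region where the growth bound holds, and the truncation does not interfere with the Cauchy--Schwarz bound. For the small-ball bound, I expect the argument of Appendix \ref{appendix:near-edge-smallball} to still give a one-point (not uniform) estimate. The reason is that the coefficient window below $n$ near $i\approx x^2$ still carries a constant fraction of the mass. If this fails, the fallback is to place the Jensen centers at points in $B_n$ within distance $O(n^{\varepsilon_b})$ of the strip and use disks of radius $O(n^{\varepsilon_b})$. The growth bound is then $C(\log n+n^{2\varepsilon_b})$, giving $R_n^{(1)}\le n^{2\varepsilon_b+o(1)}$. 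This would require shrinking $\varepsilon_b$ below $1/8$ (the count is then $o(n^{1/4})$), which is permissible since $\varepsilon_b$ is ours to fix.
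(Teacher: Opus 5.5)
Your argument is correct in its final form, but it takes a different route from the paper. The paper uses no small-ball input at the ends of the interval. It simply cites the Tao--Vu local zero count \cite[Eq.~(88)]{TV}, which gives, with overwhelming probability, $N_{B(z_0,r)}(P_n)\ll n^{o(1)}r^2$ for every $z_0\in B(0,C\sqrt n)$ and $r\ge1$. Applied to $O(n^{\varepsilon_b})$ unit disks, this yields $R_n\le n^{\varepsilon_b+o(1)}$ directly, including at the origin and at the hard edge, for every $\varepsilon_b<1/4$. You instead rebuild the count from Jensen's formula and the one-point small-ball estimates, and you correctly see that these are not available at the two ends. Your device handles this: place the Jensen center inside the region covered by Proposition~\ref{prop:near-edge}, and use a disk of radius comparable to the leftover strip. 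The cost is a quadratic loss, namely $O(\log n+n^{2\varepsilon'})$ zeros for a strip of width $n^{\varepsilon'}$, because $\log\sup|P_n|$ grows like $(\Im z)^2$. So your route uses only the paper's own Jensen argument and \cite{ANgW-var} but is cruder, while the paper's route is a one-line appeal to an external uniform estimate.

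There are two points to tighten.

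First, \cite[Theorem 4.1]{ANgW-var} at scale $M\asymp x_c\approx\log^C n$ does not give overwhelming probability. Its bounds are of the form $M^{-\theta}$ with $\theta$ fixed, so you need $M\ge n^{c}$. The small-ball step near $0$ should therefore only be used for $x_c\ge n^{\varepsilon'}$, which is in fact what your single-disk step for $[0,n^{\varepsilon'}]$ arranges.

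Second, your primary plan near $\sqrt n$ is not supported by Appendix~\ref{appendix:near-edge-smallball}. Its Diophantine step uses the index window $x^2+[Lx/2,Lx]$ above $x^2$, and its covariance input requires $x\le\sqrt n-M^{\varepsilon_g}$. So the fallback is what actually carries the argument, and as written it proves the lemma only for $\varepsilon_b<1/8$. You do not need to shrink $\varepsilon_b$: apply the same two-scale split you used near $0$. Use unit Jensen disks centered in $[\sqrt n-n^{\varepsilon_b},\sqrt n-n^{\varepsilon''}]$, which is covered by Proposition~\ref{prop:near-edge} with $\varepsilon''$ in place of $\varepsilon_b$. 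Then use one disk of radius $O(n^{\varepsilon''})$ centered near $\sqrt n-2n^{\varepsilon''}$ for the last piece, with $\varepsilon',\varepsilon''<1/8$. This gives, with overwhelming probability, $R_n\le n^{\varepsilon_b+o(1)}+O(n^{2\varepsilon'}+n^{2\varepsilon''})=o(n^{1/4})$ for the full range $\varepsilon_b<1/4$.
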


\begin{proof}
The local zero-count estimates for Weyl polynomials from
\cite[Section 12, eq. (88)]{TV} imply that, with overwhelming probability, a
deterministic unit-disk covering of these two intervals contains at most
\(n^{o(1)}\) zeros per disk. \add{We remark that the hypothesis bounded $(2+\delta)$- moment condition of \cite{TV} is satisfied by subgaussianity. }Since \(O(n^{\varepsilon_b})\) disks suffice,
\[
R_n\le n^{\varepsilon_b+o(1)}
\]
with overwhelming probability \add{by the implied union bound}.  \add{On the other hand, we always have the deterministic bound \(R_n\le n\).}  Thus, for a
sufficiently large fixed \(C\),
\[
\E R_n^2
\le
n^{2\varepsilon_b+o(1)}+n^2O(n^{-C})
=
o(n^{1/2}),
\]
because \(\varepsilon_b<1/4\).  The centered estimate follows from
\(\Var(R_n)\le\E R_n^2\).
\end{proof}

We now justify our second main result.

\begin{proof}[Proof of Theorem \ref{thm:full-positive-conditional}]
We have
\[
N_n^+-\E N_n^+
=
Z_n+(R_n-\E R_n).
\]
By Lemma \ref{lemma:expanding-bulk-var-lower},
\[
\sqrt{\Var(Z_n)}\gg n^{1/4},
\]
whereas Lemma \ref{lemma:positive-strips} gives
\[
\|R_n-\E R_n\|_2=o(n^{1/4}).
\]
Hence
\[
\frac{R_n-\E R_n}{\sqrt{\Var(Z_n)}}\longrightarrow0
\]
in \(L^2\).  Moreover the \(L^2\) triangle inequality gives
\[
\left|
\sqrt{\Var(N_n^+)}-\sqrt{\Var(Z_n)}
\right|
\le
\|R_n-\E R_n\|_2
=
o(\sqrt{\Var(Z_n)}),
\]
so
\[
\frac{\Var(N_n^+)}{\Var(Z_n)}\longrightarrow1.
\]
The CLT conclusion for $N_n^+$ now follows from Proposition
\ref{prop:expanding-bulk-CLT} and Slutsky's theorem.
\end{proof}

\section{The entire real line for random symmetric coefficients}
\label{section:full-real}

Throughout this section we impose the additional assumption
\begin{equation}
\xi\stackrel{d}{=}-\xi.
\label{eq:symmetric-atom}
\end{equation}
This assumption preserves the iid model after reflection.  Indeed,
\[
P_n(-x)
=e^{-x^2/2}\sum_{i=0}^n(-1)^i\xi_i\frac{x^i}{\sqrt{i!}}
\stackrel{d}{=}P_n(x)
\]
Note that this
does not make the positive and negative zero counts independent: intervals
near $x$ and $-x$ both use coefficients near the same radial index $i=x^2$.
We therefore put the two reflected intervals into one block.

We keep the same notation $0<\varepsilon_b<1/4$ and $B_n$ from
\eqref{eq:expbulk}, and set
\begin{equation}
\mathcal B_n:=B_n\cup(-B_n)
=\big[-\sqrt n+n^{\varepsilon_b},-n^{\varepsilon_b}\big]
 \cup
 \big[n^{\varepsilon_b},\sqrt n-n^{\varepsilon_b}\big].
\label{eq:two-sided-bulk}
\end{equation}
As before, write $L_n=\sqrt n$.

\subsection{Reflected radial blocks}

We first record the uniform transversality estimate on the reflected bulk.

\begin{lemma}[Two-sided uniform nondegeneracy]
\label{lemma:two-sided-nondegeneracy}
For every $A>0$ there exists $B>0$ such that
\[
\P\left(
\inf_{x\in\mathcal B_n}
\bigl(|P_n(x)|+|P_n'(x)|\bigr)\le n^{-B}
\right)=O(n^{-A}).
\]
The analogous estimate holds simultaneously at any deterministic collection
of at most $n^{O(1)}$ points of $\mathcal B_n$.
\end{lemma}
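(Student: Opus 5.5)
The plan is to reduce the two-sided statement to the one-sided Proposition \ref{prop:near-edge} through the symmetry assumption \eqref{eq:symmetric-atom}, and then combine the two halves with a union bound. Split
\[
\mathcal B_n=B_n\cup(-B_n).
\]
On $B_n$ the estimate is exactly Proposition \ref{prop:near-edge}, with failure probability $O(n^{-A})$ for a suitable $B$.

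For $-B_n$, set $Q_n(x):=P_n(-x)$. Then
\[
Q_n(x)=e^{-x^2/2}\sum_{i=0}^n(-1)^i\xi_i\frac{x^i}{\sqrt{i!}} .
\]
By \eqref{eq:symmetric-atom} the vector $((-1)^i\xi_i)_{i\le n}$ has the same joint law as $(\xi_i)_{i\le n}$, since the coordinates are independent and each is symmetric. Hence the whole random function $Q_n$ has the same law as $P_n$ as a random element of $C^1(\R)$. The identity uses only the coefficient vector, so equality holds as processes and not merely for one-dimensional marginals.

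Since $Q_n'(x)=-P_n'(-x)$, we get
\[
\inf_{x\in -B_n}\bigl(|P_n(x)|+|P_n'(x)|\bigr)=\inf_{y\in B_n}\bigl(|Q_n(y)|+|Q_n'(y)|\bigr).
\]
This infimum is a measurable functional of the process, because by continuity it equals an infimum over a countable dense set. Its law is therefore the same as that of the corresponding quantity for $P_n$ on $B_n$. Proposition \ref{prop:near-edge} then gives probability $O(n^{-A})$ that it is at most $n^{-B}$.

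A union bound over the two halves gives the first claim with the larger of the two values of $B$. The two events are dependent, since they use the same coefficients, but a union bound does not need independence.

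For a deterministic collection $\mathcal C\subset\mathcal B_n$ of at most $n^{O(1)}$ points, write $\mathcal C=\mathcal C_+\cup\mathcal C_-$ according to sign. Apply \eqref{eq:ne-u-nond} to $\mathcal C_+$. For $\mathcal C_-$, apply it to the reflected collection $-\mathcal C_-\subset B_n$ for the process $Q_n$, which is legitimate because $Q_n$ has the same law as $P_n$. Taking a union bound and increasing $B$ finishes the proof.

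There is no real obstacle. The only point needing care is that the symmetry must be invoked at the level of the joint law of the coefficient vector, so that the identity in law holds for the whole process $Q_n$ rather than pointwise.
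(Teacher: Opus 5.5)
Your proposal is correct and follows the paper's proof. The paper likewise reflects via $x\mapsto P_n(-x)$, uses \eqref{eq:symmetric-atom} to see that the coefficients $((-1)^i\xi_i)$ are again iid copies of $\xi$, applies Proposition \ref{prop:near-edge} on $B_n$, notes that the sign change in the derivative leaves $|P_n|+|P_n'|$ unchanged, and finishes both claims with a union bound.
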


\begin{proof}
Proposition \ref{prop:near-edge} gives the assertion on $B_n$.  On $-B_n$,
apply the same proposition to the reflected process $x\mapsto P_n(-x)$.
By \eqref{eq:symmetric-atom}, its coefficients $((-1)^i\xi_i)_{i=0}^n$
are again iid copies of $\xi$.  Since
\[
\frac{d}{dx}P_n(-x)=-P_n'(-x),
\]
the value--derivative norm is unchanged.  A union bound proves both claims.
\end{proof}

We next partition $B_n$ into consecutive half-open intervals $J_j$ of length
\[
h=L_n^\gamma,
\]
where $\gamma>0$ will be chosen sufficiently small for each fixed moment
order $k$, and define the reflected block
\[
J_j^\pm:=J_j\cup(-J_j).
\]
Let
\begin{equation}
X_j:=N_{J_j^\pm}-\E N_{J_j^\pm},
\qquad
T_n:=\sum_jX_j=N_{\mathcal B_n}-\E N_{\mathcal B_n}.
\label{eq:paired-block-sum}
\end{equation}
Similarly to Lemma \ref{lemma:radial-localization}, we can establish the following.
\begin{lemma}[Localization of reflected blocks]
\label{lemma:reflected-localization}
Fix $A>0$ and a moment order $k \in \Z_+$.  There is $K=K(A,k)>0$ such that,
with $R=K\sqrt{\log n}$, the following event has probability
$1-O(n^{-A})$.  For every $J_j=[r_j,r_j+h)$, let
\[
W_j:=\left\{0\le i\le n:
\dist\bigl(i,\{x^2:x\in J_j\}\bigr)
\le R(r_j+h)\right\}
\]
and
\[
P_{n,j}(x):=e^{-x^2/2}\sum_{i\in W_j}
\xi_i\frac{x^i}{\sqrt{i!}}.
\]
Then
\[
N_{J_j^\pm}(P_n)=N_{J_j^\pm}(P_{n,j})
\qquad\text{for every }j.
\]
Moreover, $W_j\cap W_\ell=\varnothing$ whenever $|j-\ell|\ge2$.
\end{lemma}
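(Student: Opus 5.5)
The proof mirrors that of Lemma \ref{lemma:radial-localization}. The only new points are that the same coefficient window $W_j$ must control $P_n$ on both $J_j$ and $-J_j$, and that the non-degeneracy input now comes from Lemma \ref{lemma:two-sided-nondegeneracy}.

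\emph{Tail estimate on the reflected interval.} The coefficient functions satisfy $a_i(-x)=(-1)^i a_i(x)$ and $a_i'(-x)=(-1)^{i+1}a_i'(x)$. So the absolute values $|a_i(-x)|$ and $|a_i'(-x)|$ equal $|a_i(x)|$ and $|a_i'(x)|$. Consequently, the bound of Lemma \ref{lemma:weyl-C1-tail} for indices with $|i-x^2|\ge R|x|$ holds verbatim for $x\in -J_j$.

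Lemma \ref{lemma:weyl-C1-tail} was stated for $x\in I_W$. On $B_n$ we need it for all $x\ge n^{\varepsilon_b}$ and $R=K\sqrt{\log n}\le x^{1/3}$. This holds once $n$ is large, since $x^{1/3}\ge n^{\varepsilon_b/3}\gg\sqrt{\log n}$. The appendix proof is a Gaussian/Poisson-type concentration for $e^{-x^2}x^{2i}/i!$ and is uniform in $x$, so the extension is harmless. This is the step I would double-check: near the hard edge the sum is truncated at $n$, which only removes terms and so only helps.

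\emph{Uniform approximation and homotopy.} Fix $j$. For $x\in\pm J_j$ and $i\notin W_j$ we have $|i-x^2|\ge R(r_j+h)\ge R|x|$. Work on the event $\sum_l\xi_l^2\le n^2$, whose complement has exponentially small probability by a Chernoff bound. Cauchy--Schwarz then gives
\[
\sup_{x\in J_j^\pm}\bigl(|P_n-P_{n,j}|+|P_n'-P_{n,j}'|\bigr)\le Cn(1+R)e^{-cR^2/2}\le n^{-B}=:\delta,
\]
once $K=K(A,k,B)$ is large. A union bound over the $O(L_n/h)$ blocks makes this hold for all $j$ simultaneously.

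Next, Lemma \ref{lemma:two-sided-nondegeneracy} is applied with exponent $A+10k$. It gives $|P_n|+|P_n'|>4\delta$ on $\mathcal B_n$. It also gives $|P_n|>4\delta$ at every point of the deterministic set $\{\pm r_j,\pm(r_j+h)\}$ of endpoints, which has $n^{O(1)}$ points.

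The interpolation $P_t=(1-t)P_n+tP_{n,j}$ then satisfies $|P_t|+|P_t'|>2\delta$ on $\overline{J_j}$ and on $-\overline{J_j}$, and $|P_t|>2\delta$ at the four endpoints. Apply Lemma \ref{lemma:zero-homotopy} separately on $\overline{J_j}$ and on $-\overline{J_j}$ and add the two counts. The nonvanishing at the endpoints also ensures that the half-open versus closed conventions do not affect the counts. This gives $N_{J_j^\pm}(P_n)=N_{J_j^\pm}(P_{n,j})$.

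\emph{Disjointness.} The sets $W_j$ are defined exactly as in Lemma \ref{lemma:radial-localization}, so disjointness for $|j-\ell|\ge2$ is unchanged. The squared intervals of non-adjacent blocks are separated by at least $2r_jh\ge 2n^{\varepsilon_b}h$. The padding on each side is $R(r_j+h)=O(r_j\sqrt{\log n})$. Since $h=L_n^\gamma\gg\sqrt{\log n}$, the windows are disjoint for large $n$, uniformly in $j$. The point is that $-J_j$ uses the same squared set as $J_j$, so pairing the reflected intervals costs nothing.
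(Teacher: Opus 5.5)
Your proposal is correct and follows essentially the same route as the paper. The paper likewise notes that $|a_i(-x)|$ and $|a_i'(-x)|$ agree with their values at $|x|$, so Lemma \ref{lemma:weyl-C1-tail} applies; it then runs the arguments of Lemma \ref{lemma:radial-localization} and Lemma \ref{lemma:zero-homotopy} on both components of $J_j^\pm$ with the same window $W_j$, on the event of Lemma \ref{lemma:two-sided-nondegeneracy}, and inherits disjointness from the positive squared intervals, while your write-up just makes explicit the details the paper leaves implicit (uniformity of the tail lemma on $B_n$, the endpoint set, and the half-open conventions).
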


\begin{proof}
For $x<0$, the squared Weyl coefficient and its differentiated square are
\[
\left|e^{-x^2/2}\frac{x^i}{\sqrt{i!}}\right|^2
=e^{-x^2}\frac{|x|^{2i}}{i!},
\qquad
\left|\left(e^{-x^2/2}\frac{x^i}{\sqrt{i!}}\right)'\right|^2
=\frac{(i-x^2)^2}{x^2}e^{-x^2}\frac{|x|^{2i}}{i!}.
\]
Thus Lemma \ref{lemma:weyl-C1-tail} applies with $|x|$ in place of $x$.
On the event in Lemma \ref{lemma:two-sided-nondegeneracy}, the proofs of
Lemma \ref{lemma:radial-localization} and Lemma \ref{lemma:zero-homotopy} are applicable
on both connected components of $J_j^\pm$, using the same coefficient set
$W_j$.  This gives the equality of zero counts.  

The disjointness assertion
is exactly the last part of Lemma \ref{lemma:radial-localization}, since it
depends only on the squared positive intervals.
\end{proof}

The preceding lemma gives approximate finite-range dependence for the paired
counts.  More precisely, if $C_1,\ldots,C_s$ are collections of block indices
separated by distance at least two and $H_a$ is a monomial of fixed degree in
$\{N_{J_j^\pm}:j\in C_a\}$, then
\begin{equation}
\E\prod_{a=1}^sH_a=\prod_{a=1}^s\E H_a+O(n^{-A})
\label{eq:paired-independence}
\end{equation}
for arbitrary $A>0$.  This follows exactly as in Proposition
\ref{prop:radial-independence}, because the localized variables belonging to
different collections depend on disjoint families of the original iid
coefficients.  Centered paired counts satisfy the same conclusion.  Also,
by applying the Jensen argument in Lemma \ref{lemma:local-moments} on the two
components of $J_j^\pm$, for every fixed $q\ge1$,
\begin{equation}
\E|X_j|^q\le n^{o(1)}(1+h^2)^q
\label{eq:paired-local-moments}
\end{equation}
uniformly in $j$.

\subsection{The outlier regions}\label{sub:outlier}

The edge-interval estimate from Lemma \ref{lemma:positive-strips} applies
to the reflected strips by symmetry.  We next control the part beyond the
hard edge, i.e. the zeros of absolute value larger than $\sqrt n$.

\begin{lemma}[outlier zeros]
\label{lemma:exterior-zeros}
For every fixed $\eta>0$, as $n\to \infty$, with overwhelming probability we have
\begin{equation}
N_{\mathbb R\setminus[-\sqrt n,\sqrt n]}
\le n^{\eta+o(1)}.
\label{eq:exterior-whp}
\end{equation}
Consequently, 
\begin{equation}
\E (N_{\mathbb R\setminus[-\sqrt n,\sqrt n]}^2)=o(\sqrt n).
\label{eq:exterior-L2}
\end{equation}
\end{lemma}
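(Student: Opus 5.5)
The plan is to reduce the count of real zeros with $|x|>\sqrt n$ to a count of complex zeros of the unnormalized Weyl polynomial $F_n(z)=\sum_{i\le n}\xi_i z^i/\sqrt{i!}$, and to bound the latter with Jensen's formula. This is the same mechanism as in Lemma \ref{lemma:local-moments}, but now run at large radii.

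\textbf{Reduction to a root-counting problem.} Since $N_{\mathbb R\setminus[-\sqrt n,\sqrt n]}$ is at most the number of complex zeros of $F_n$ in $\{|z|>\sqrt n\}$, it suffices to control zeros outside the disk $D(0,\sqrt n)$. I would first show that, with overwhelming probability, all zeros lie in $|z|\le \sqrt n\,(1+n^{\eta-1/2})$ or so. Then the outlier zeros all lie in a thin annulus
\[
\sqrt n<|z|\le \rho_+:=\sqrt n+n^{\eta}.
\]

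\textbf{Zeros in the annulus via the reversed polynomial.} For the annulus count I would use the reciprocal polynomial $G(w)=w^nF_n(1/w)$, whose zeros inside $|w|<1/\sqrt n$ correspond to zeros of $F_n$ outside $D(0,\sqrt n)$. An equivalent and more natural route is Jensen's formula for $F_n$ on disks centred on the real axis near $\pm\sqrt n$. Cover $[\sqrt n,\rho_+]$ and its reflection by $O(n^{\eta})$ unit intervals centred at deterministic points $x_0$. Apply Jensen on $D(x_0,2)$ versus $D(x_0,4)$ for the normalized function $P_n$; as in Lemma \ref{lemma:local-moments}, on the event $\sum\xi_i^2\le n^2$ we have $\log\sup_{D(x_0,4)}|P_n|\le C\log n$. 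For the lower bound at the centre, $|P_n(x_0)|\ge n^{-D}$, I need a one-point small-ball estimate at $x_0\approx\sqrt n$. Here the coefficient mass $e^{-x_0^2}x_0^{2i}/i!$ for $i\le n$ is spread over $\asymp\sqrt n$ indices near $n$, each of size $\gg n^{-1/4}$ up to constants. A crude Kolmogorov--Rogozin or Erd\H{o}s--Littlewood--Offord bound then gives $\P(|P_n(x_0)|\le n^{-D})\le n^{-c}$, which is too weak to be overwhelming. I would therefore instead invoke the local zero-count estimate of \cite[Section 12, eq. (88)]{TV} (as in Lemma \ref{lemma:positive-strips}), which already gives at most $n^{o(1)}$ zeros per unit disk with overwhelming probability, uniformly for disks near the edge. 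A union bound over $O(n^{\eta})$ disks then yields \eqref{eq:exterior-whp} for the annulus part.

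\textbf{Main obstacle: zeros far beyond the edge.} The delicate step is ruling out zeros with $|x|\ge\sqrt n+n^{\eta}$. For such $x$ the top coefficients dominate: the ratio of consecutive terms is $x/\sqrt{i+1}\ge 1+n^{\eta-1/2}$. Hence a window of the top $m=n^{1/2-\eta'}\log n$ coefficients carries all but a tiny fraction of the mass, and the function behaves like $x^n$ times a random sum whose terms decay geometrically in the reversed index. I would argue via Rouché / Jensen on the reversed polynomial $G(w)$ in the disk $|w|\le 1/(\sqrt n+n^\eta)$. There $G$ is essentially $\sum_{j\le m}\xi_{n-j}w^j/\sqrt{(n-j)!}$, and the number of zeros is bounded by $\log(\sup|G|/|G(w_0)|)$ divided by a constant. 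To get $|G(w_0)|$ not too small with overwhelming probability, I would average over $n^{O(1)}$ choices of $w_0$ on a small circle: $\log|G(w_0)|$ is tiny only if a polynomial of degree $\le m$ is tiny at many well-separated points. Subgaussian anti-concentration of the top coefficient $\xi_n$, combined with a repeated-trial or Remez-type argument, should make this fail only with overwhelming small probability. This step is where I expect genuine work; if it resists, I would settle for the polynomial tail $N\le n^{\eta}$ off an event of probability $O(n^{-A})$, which is all that is needed.

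\textbf{Consequence \eqref{eq:exterior-L2}.} This follows exactly as in Lemma \ref{lemma:positive-strips}. First,
\[
\E N^2\le n^{2\eta+o(1)}+n^2\cdot O(n^{-C})=o(\sqrt n)
\]
once $\eta<1/4$. The deterministic bound $N\le n$ handles the exceptional event.
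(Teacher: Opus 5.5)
Your treatment of $\sqrt n<|x|\le\sqrt n+n^{\eta}$ is fine: you cover it by $O(n^{\eta})$ unit disks and apply \cite[eq. (88)]{TV}. Your deduction of \eqref{eq:exterior-L2} from \eqref{eq:exterior-whp} is also fine. The gap is the region beyond $\sqrt n+n^{\eta}$. There you plan to show that, with overwhelming probability, there are no zeros (not even real ones) with $|x|\ge\sqrt n+n^{\eta}$. This is false.

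For standard Gaussian coefficients, which satisfy the hypotheses, Kac--Rice gives $\rho_1(x)\approx\sqrt n/(\pi(x^2-n))$ for $x-\sqrt n\gg1$. Hence $\E N_{(\sqrt n+n^{\eta},\infty)}\approx\frac{1-2\eta}{4\pi}\log n\to\infty$. Since $N\le n$ deterministically, $\P(N_{(\sqrt n+n^{\eta},\infty)}\ge1)\ge \E N/n\gg\log n/n$, which is not $O(n^{-A})$ for $A>1$.

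Anti-concentration of $\xi_n$ cannot help either. For Gaussian $\xi$, the event $|\xi_n|\le n^{-A}$ has probability $\asymp n^{-A}$. On it, the sum of the roots is $-\sqrt n\,\xi_{n-1}/\xi_n$, so some root has modulus at least $n^{A-1/2}|\xi_{n-1}|$.

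Your fallback does not rescue this. Jensen for the reversed polynomial on $|w|\le1/(\sqrt n+n^{\eta})$ counts \emph{all complex} zeros with $|z|\ge\sqrt n+n^{\eta}$. In the Gaussian case there are $\asymp n^{1/2-\eta}$ of these in expectation, because the complex zero intensity at distance $t\gg1$ beyond the edge decays only like $t^{-2}$, giving about $\sqrt n/(2t)$ zeros beyond $\sqrt n+t$. So no such Jensen bound can be $n^{\eta}$. If you split at $\sqrt n+T$, with unit disks before and a complex count after, the best you can get is $n^{1/4+o(1)}$ at $T=n^{1/4}$. Its square is not $o(\sqrt n)$.

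The missing idea is a multiscale (Whitney-type) covering adapted to this $t^{-2}$ decay. The paper covers $[\sqrt n+4,\sqrt n+n^{1/2-\eta}]$ and its reflection by $O(\log n)$ dyadic pieces $[\sqrt n+s,\sqrt n+2s]$. Each piece lies in $O(1)$ disks of radius $r\asymp s$ whose enlargements avoid $B(0,\sqrt n)$. In the local circular law \cite[eq. (87)]{TV}, with $c\asymp r$, the area and boundary-annulus terms then vanish. This leaves $n^{o(1)}$ zeros per disk with overwhelming probability, hence $n^{o(1)}$ in total over the $O(\log n)$ disks.

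Only the strip $[\sqrt n,\sqrt n+4]$ needs \cite[eq. (88)]{TV}. The global complex count is invoked only beyond $\sqrt n+n^{1/2-\eta}$, not beyond $\sqrt n+n^{\eta}$. There \cite[eq. (89)]{TV} leaves at most $O(n^{\eta+o(1)})$ zeros, which matches the heuristic $\sqrt n/(2t)$ at $t=n^{1/2-\eta}$. This gives \eqref{eq:exterior-whp} without ever claiming the outliers are absent.
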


\begin{proof}
We again use the results from \cite[Section 12]{TV} for zeros of Weyl polynomial. First, we cover $[-\sqrt{n}-n^{1/2-\eta}, -\sqrt{n}-4, ] \cup [\sqrt{n}+4,\sqrt{n}+n^{1/2-\eta}]$ by $O(\log n)$ dyadic intervals of type
\[[-\sqrt n-2s,-\sqrt n-s] \cup [\sqrt n+s,\sqrt n+2s]
\quad\text{where}\quad 4\le s\le n^{1/2-\eta}.
\]
Note that each such interval can be covered by $O(1)$ disks $B(z_0,r)$ of radius $r$ comparable to $s$ whose slightly
enlarged disks remain outside $B(0,\sqrt n)$. We then apply \cite[Eq. (87)]{TV} to estimate the number of (real and complex and) zeros of $P_n$ in these disks: with overwhelming probability
\begin{equation*}
N_{B(z_0,r)}(P_n)
=
\frac{1}{\pi}
\int_{B(z_0,r)}
\mathbf{1}_{B(0,\sqrt n)}(z)\,dz
+
O\!\left(n^{o(1)}c^{-1}r\right)
+
O\!\left(
\int_{B(z_0,r+c)\setminus B(z_0,r-c)}
\mathbf{1}_{B(0,\sqrt n)}(z)\,dz
\right).
\label{eq:TV-local-circular-law}
\end{equation*}

By the above formula, the main area term and
the boundary-annulus area term vanish, while the remaining error is
$n^{o(1)}$ (we can choose $c,r$ to have order $s$).  Hence each such dyadic disk contains at most $n^{o(1)}$
zeros with overwhelming probability. 

Second, for the interval $[-\sqrt n-4,-\sqrt n] \cup [\sqrt n,\sqrt n+4]$ (and the disks of radius $O(1)$ covering them) we use \cite[Eq. (88)]{TV} (which also follows from \cite[Equation (87)]{TV}) which says that for $C>0$, and for any $z_0 \in B(0,C\sqrt{n})$ and $r\ge 1$, one has with overwhelming probability that
$$N_{B(z_0,r)}(P_n)\ll n^{o(1)}r^2.$$
A union bound over these
$O(\log n)$ dyadic disks thus gives
\begin{equation}
N_{\{\sqrt n<|x|\le\sqrt n+n^{1/2-\eta}\}}
\le n^{o(1)}
\label{eq:near-exterior}
\end{equation}
with overwhelming probability.

Lastly, \cite[Eq. (89)]{TV}, with its parameter
$\varepsilon$ chosen equal to $\eta$, says that all but
$O(n^{\eta+o(1)})$ complex zeros lie in
\[
B\bigl(0,\sqrt n+n^{1/2-\eta}\bigr)
\]
with overwhelming probability.  In particular, the same bound holds for
the real zeros beyond that disk.  Combining this with
\eqref{eq:near-exterior} proves \eqref{eq:exterior-whp}.

For the second moment estimate, we notice that $P_n$ is nonzero with overwhelming probability, and otherwise every zero count under consideration is at most
$n$.  Since overwhelming probability estimates allow an arbitrarily large fixed
power in the exceptional probability,
\[
\E (N_{\mathbb R\setminus[-\sqrt n,\sqrt n]}^2)
\le n^{2\eta+o(1)}+n^2O(n^{-A}).
\]
Choosing $A>3$ and $\eta<1/4$, we obtain \eqref{eq:exterior-L2}.
\end{proof}
Having treated with the outlier range, we now consider it together with the edge intervals. Let
\begin{equation}
D_n:=N_{\mathbb R\setminus\mathcal B_n}.
\label{eq:discarded-full-line}
\end{equation}
We will show the following.
\begin{lemma}
\label{lemma:full-line-discarded}
Under the hypotheses of Theorem \ref{thm:full-real},
\begin{equation}
\E D_n^2=o(n^{1/2}),\quad
\Var(D_n)=o(n^{1/2}).
\label{eq:full-line-discarded-L2}
\end{equation}
In particular, the same conclusion holds for standard Gaussian coefficients.
\end{lemma}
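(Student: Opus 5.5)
The plan is to split $\R\setminus\mathcal B_n$ into three pieces and bound the second moment of each, then combine them with the elementary inequality $(a+b+c)^2\le 3(a^2+b^2+c^2)$. Write
\[
D_n = R_n^+ + R_n^- + N_{\R\setminus[-\sqrt n,\sqrt n]},
\]
where $R_n^+=R_n=N_{[0,n^{\varepsilon_b})}+N_{(\sqrt n-n^{\varepsilon_b},\sqrt n]}$ is the positive edge count from Lemma \ref{lemma:positive-strips}, and $R_n^-$ is the analogous count on $[-\sqrt n,-\sqrt n+n^{\varepsilon_b})\cup(-n^{\varepsilon_b},0)$. A zero at $0$ is counted at most twice; this is harmless, or the split can be taken disjoint. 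Then
\[
\E D_n^2\le 3\bigl(\E R_n^2+\E (R_n^-)^2+\E N_{\R\setminus[-\sqrt n,\sqrt n]}^2\bigr),
\]
and the variance bound follows from $\Var(D_n)\le \E D_n^2$.

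The first and third terms are handled directly. Lemma \ref{lemma:positive-strips} gives $\E R_n^2=o(\sqrt n)$, and Lemma \ref{lemma:exterior-zeros}, equation \eqref{eq:exterior-L2}, gives $\E N_{\R\setminus[-\sqrt n,\sqrt n]}^2=o(\sqrt n)$. Neither of these uses symmetry.

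For $R_n^-$ we use the symmetry assumption \eqref{eq:symmetric-atom}: the process $x\mapsto P_n(-x)$ has the same law as $P_n$. Hence $R_n^-$ has the same distribution as the positive edge count over the reflected intervals, which coincides with $R_n$ up to endpoint conventions. Endpoints carry no zero with overwhelming probability, or we can simply redo the argument on closed intervals. So $\E(R_n^-)^2=o(\sqrt n)$.

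Even without symmetry, $R_n^-$ can be handled directly by the proof of Lemma \ref{lemma:positive-strips}. The local estimate \cite[Eq. (88)]{TV} is valid for any center $z_0\in B(0,C\sqrt n)$ and uses only the $(2+\delta)$-moment hypothesis. Covering the two negative strips by $O(n^{\varepsilon_b})$ unit disks therefore gives $R_n^-\le n^{\varepsilon_b+o(1)}$ with overwhelming probability. Combined with the deterministic bound $R_n^-\le n$, this yields
\[
\E(R_n^-)^2\le n^{2\varepsilon_b+o(1)}+n^2O(n^{-C})=o(\sqrt n),
\]
since $\varepsilon_b<1/4$.

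The final assertion, about standard Gaussian coefficients, is immediate: the standard Gaussian is symmetric and subgaussian with mean zero and variance one, so it satisfies the hypotheses of Theorem \ref{thm:full-real}.

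The only point requiring care is checking that the inputs from \cite{TV} apply at negative centers and near the hard edge. Equation (88) there is stated for all $z_0$ in a disk of radius $C\sqrt n$, so this is not a real obstacle. Everything else is bookkeeping of overwhelming-probability events against the deterministic bound $N\le n$, which holds by the stated convention when $P_n\equiv0$.
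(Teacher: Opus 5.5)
Your proposal is correct and is essentially the paper's argument. Both cover the central and hard-edge strips by $O(n^{\varepsilon_b})$ unit disks via \cite[Eq. (88)]{TV}, which is valid at negative centers, so symmetry is not actually needed. Both also control the exterior through Lemma \ref{lemma:exterior-zeros} and absorb the exceptional event with the deterministic bound $N\le n$. The only difference is bookkeeping: the paper bounds $D_n$ in one step by $n^{\max\{\varepsilon_b,\eta\}+o(1)}$ with overwhelming probability, whereas you add up the already established second-moment bounds of Lemmas \ref{lemma:positive-strips} and \ref{lemma:exterior-zeros} using $(a+b+c)^2\le 3(a^2+b^2+c^2)$.
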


\begin{proof}
Inside $[-\sqrt n,\sqrt n]$, the complement of $\mathcal B_n$ consists of
the central interval $[-n^{\varepsilon_b},n^{\varepsilon_b}]$ and two
hard-edge intervals of length $n^{\varepsilon_b}$. We cover them by
$O(n^{\varepsilon_b})$ unit disks and apply
\cite[Eq. (88)]{TV} again to obtain that, with overwhelming probability, there are at
most $n^{\varepsilon_b+o(1)}$ zeros there. Equation \eqref{eq:exterior-whp} of Lemma
\ref{lemma:exterior-zeros}, with any fixed $\eta<1/4$, handles the exterior.
Since also $\varepsilon_b<1/4$, we have
\[
D_n\le n^{\max\{\varepsilon_b,\eta\}+o(1)}
\]
with overwhelming probability.  Using the deterministic degree bound on
the exceptional event proves the first assertion of
\eqref{eq:full-line-discarded-L2}; the centered assertion then follows as
$\Var(D_n)\le\E D_n^2$. 
\end{proof}
We next turn to the variance of $N_{\CB_n}$.
\subsection{Variance comparison}
Our main goal is the following.

\begin{proposition}\label{prop:two-sided-variance}
Under the hypotheses of Theorem \ref{thm:full-real},
\begin{equation}
\Var(N_{\mathcal B_n})=(2K+o(1))\sqrt n.
\label{eq:two-sided-variance}
\end{equation}
\end{proposition}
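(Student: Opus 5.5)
Write $N_{\mathcal B_n}=\sum_j N_{J_j^\pm}$ and prove \eqref{eq:two-sided-variance} by comparing with the Gaussian model. Since the Gaussian coefficients are themselves symmetric and subgaussian, every result of this section applies to them as well.

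\textbf{Step 1: the Gaussian case.} For Gaussian coefficients, Theorem \ref{thm:DV} and \cite[Theorem 4]{DV} give $\Var(N_{\R})=(2K+o(1))\sqrt n$. By Lemma \ref{lemma:full-line-discarded}, $\|D_n-\E D_n\|_2=o(n^{1/4})$, which is also valid for Gaussian coefficients. The $L^2$ triangle inequality then yields $\Var_G(N_{\mathcal B_n})=(2K+o(1))\sqrt n$. It therefore suffices to show
\[
\Var_\xi(N_{\mathcal B_n})-\Var_G(N_{\mathcal B_n})=o(\sqrt n).
\]

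\textbf{Step 2: localize the variance.} Expand $\Var(N_{\mathcal B_n})=\sum_{j,\ell}\Cov(N_{J_j^\pm},N_{J_\ell^\pm})$ in either model. By \eqref{eq:paired-independence}, the terms with $|j-\ell|\ge2$ total $O(n^{-A})$. What remains is a sum over $O(L_n/h)$ near-diagonal local quantities $\E(X_jX_\ell)$ with $|j-\ell|\le1$, and each of these involves only the positive window $J_j\cup J_{j+1}$ together with its reflection. A cleaner route is to regroup into macroscopic radial shells. Fix a small $\delta>0$ and split $B_n$ into the pieces $[c_1M,c_2M]$ with $c_2/c_1=1+\delta$, at dyadic-type scales $M$ ranging from $n^{\varepsilon_b}$ to $\sqrt n$, with interval-level buffers between consecutive pieces. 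Finite-range independence shows that the variance is additive over these pieces, up to buffer terms. The buffer terms are $O(n^{o(1)}h^4)$ per buffer, and there are $O(\log n)$ buffers, so they are negligible.

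\textbf{Step 3: compare the two models on each piece.} This is the main step. On each piece $I=[c_1M,c_2M]$, \cite[Theorem 1.12]{ANgW-var} gives $\Var_\xi(N_I)=\Var_G(N_I)+o(M)$ for positive bulk intervals. By the symmetry \eqref{eq:symmetric-atom}, the same holds on $-I$, since $P_n(-\cdot)\stackrel{d}{=}P_n$ in both models.

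The difficulty is the cross term $\Cov(N_I,N_{-I})$. This covariance is not controlled by the one-sided result, because both counts use the same coefficient window around $i\approx x^2$. I would first try to show that this covariance is $o(M)$ in both models, or at least equal up to $o(M)$. The Gaussian input is the decay of the kernel $K(x,-y)=e^{-(x^2+y^2)/2}e^{-xy}$ for $x,y>0$. After normalization this is $e^{-(x+y)^2/2}$, which is superexponentially small. Gaussian clustering in the style of \cite[Lemma 11]{DV} then makes $\Cov_G(N_I,N_{-I})$ negligible.

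For general $\xi$, I would rerun the comparison machinery of \cite{ANgW-var} on the two-point vector $(P_n(x),P_n'(x),P_n(-y),P_n'(-y))$. The coefficient vectors of $P_n(-\cdot)$ are the positive-side coefficient vectors twisted by $(-1)^i$. This sign alternation makes them nearly orthogonal to the untwisted ones, so their joint characteristic functions should factor up to small errors, by the same Diophantine/Esseen estimates. The result would be $|\Cov_\xi(N_I,N_{-I})-\Cov_G(N_I,N_{-I})|=o(M)$. This is where I expect the real work. If a direct factorization fails, my fallback is to compare the two-sided bulk statistics via a Lindeberg swap of the coefficients within each window $W_j$, using the nondegeneracy of Lemma \ref{lemma:two-sided-nondegeneracy} to smooth the zero counts.

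\textbf{Step 4: sum over pieces.} The errors from Step 3 are $o(M)$ on a piece at scale $M$. Summing over the geometric scales gives $o(\sqrt n)$, because the pieces near $\sqrt n$ dominate; some uniformity in $M$ from \cite{ANgW-var} is required here. Combined with Step 1, this yields \eqref{eq:two-sided-variance}.
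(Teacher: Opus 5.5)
\textbf{There is a genuine gap, at the step you flag yourself: the reflected cross term in Step 3.} You never establish
$|\Cov_\xi(N_I,N_{-I})-\Cov_G(N_I,N_{-I})|=o(M)$. The heuristic is that the $(-1)^i$-twisted coefficient vectors are nearly orthogonal to the untwisted ones, so the four-dimensional characteristic function of $(P_n(x),P_n'(x),P_n(-y),P_n'(-y))$ should approximately factor. This is unproved. Even if it were proved, it would only give small-ball or characteristic-function information. Turning that into a comparison of covariances of zero counts over macroscopic intervals amounts to redoing the entire variance comparison of \cite{ANgW-var}. The Lindeberg fallback is only named.

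There is a secondary gap. \cite[Theorem 1.12]{ANgW-var} is only available for $[c_1M,c_2M]\subset[0,\sqrt n-M]$ with fixed constants. Your shells therefore cannot reach the part of $B_n$ within $o(\sqrt n)$ of the hard edge. ``Uniformity in $M$'' does not fix this, and you have no a priori variance bound there for general $\xi$.

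\textbf{The missing idea.} You had the right reduction at the start of Step 2 and abandoned it for macroscopic shells. Proceed as follows.
\begin{enumerate}[(i)]
\item Discard the pairs with $|j-\ell|\ge2$ using \eqref{eq:paired-independence}.
\item For $|j-\ell|\le1$, split each component of $J_j^\pm$ and $J_\ell^\pm$ into $O(h)$ intervals of length at most one.
\item Apply the quantitative two-point local comparison of Lemma \ref{lemma:reflected-local-comparison} directly to every pair $(U,V)$ of these unit intervals. This includes the reflected pairs $U\subset J_j$, $V\subset -J_\ell$.
\end{enumerate}
That lemma rests on \cite[Theorem 5.2]{TV}, which allows centers of either sign with $n^{\varepsilon}\le|x_i|\le\sqrt n$. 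So the cross terms are handled exactly like the one-sided terms, and uniformly up to the edge of $\mathcal B_n$.

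Each unit-pair covariance differs between the two models by $O(n^{-c_0})$; the bounded first moments control the product-of-means term. There are $O(L_n/h)\cdot O(h^2)=O(L_nh)$ such pairs. Choosing $h=L_n^\gamma$ with $\gamma/2<c_0$ makes the total error $O(L_nhn^{-c_0})=o(L_n)$. Combined with your Step 1, this completes the proof. It needs no macroscopic shells, no appeal to \cite[Theorem 1.12]{ANgW-var}, and no argument that $\Cov(N_I,N_{-I})$ is small in either model.
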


Because the proof below compares the variance with that of the Gaussian
ensemble, we distinguish the corresponding zero counts as follows.  For \(I\subset\mathbb R\), let \(N_{I,\boldsymbol{\xi}}\) denote the
number of zeros in \(I\) when the coefficients are iid copies of \(\xi\), and
let \(N_{I,G}\) denote the corresponding number when the coefficients are iid
standard Gaussian random variables.

For Proposition \ref{prop:two-sided-variance} we will need some elementary quantitative consequences of the local universality result, Theorem \ref{theorem:universality}, for Weyl polynomials.  It is the
same estimate used in \cite[Lemma 7.8]{ANgW-var} (whose proof follows \cite{ONgV, TV}), now allowing the unit
intervals to lie on either side of the origin.

\begin{lemma}[local comparison on reflected unit intervals]
\label{lemma:reflected-local-comparison}
There exists $c_0>0$ such that the following holds.  If $U,V$ are intervals
of length at most one contained in $\mathcal B_n$, then, uniformly in $U,V$,
\begin{align}
\left|\E N_{U,\boldsymbol\xi}-\E N_{U,G}\right|&\le n^{-c_0},
\label{eq:local-first-comparison}\\
\left|\E(N_{U,\boldsymbol\xi}N_{V,\boldsymbol\xi})
-\E(N_{U,G}N_{V,G})\right|&\le n^{-c_0}.
\label{eq:local-second-comparison}
\end{align}
Furthermore $\E N_{U,\boldsymbol\xi}, \E N_{U,G}, \E N_{V,\boldsymbol\xi},\E N_{V,G}$ are all of order $O(1)$.
\end{lemma}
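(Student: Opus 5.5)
The plan is to reduce both estimates to the positive-axis comparison of \cite[Lemma 7.8]{ANgW-var} (which rests on \cite{ONgV, TV}) by using the symmetry \eqref{eq:symmetric-atom}. The new case is the mixed one, where $U$ lies on one side of the origin and $V$ on the other, so the two-point estimate couples $P_n$ near $x$ and near $-y$.

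\textbf{One-point estimate and intervals on one side.} If $U\subset B_n$, then \eqref{eq:local-first-comparison} is exactly the one-point part of \cite[Lemma 7.8]{ANgW-var}, with the near-edge range supplied by Proposition \ref{prop:near-edge}. If $U\subset -B_n$, I replace $P_n(x)$ by $P_n(-x)$. By \eqref{eq:symmetric-atom} this has the same law as $P_n$ for both the $\xi$-model and the Gaussian model, so $N_{U,\bxi}\stackrel{d}{=}N_{-U,\bxi}$ and $N_{U,G}\stackrel{d}{=}N_{-U,G}$. The same reflection handles \eqref{eq:local-second-comparison} when $U,V$ both lie in $-B_n$. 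When both lie in $B_n$, the estimate is again \cite[Lemma 7.8]{ANgW-var}. The bounds $\E N_{U}=O(1)$ in both models follow from the Jensen argument of Lemma \ref{lemma:local-moments} with $h=1$, which gives $N_U\le n^{o(1)}$ with overwhelming probability. Alternatively, for the Gaussian model use the Kac--Rice density $\rho_1=O(1)$ and then transfer it by \eqref{eq:local-first-comparison}.

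\textbf{Mixed case, $U\subset B_n$ and $V\subset -B_n$.} Write $V=-V'$ with $V'\subset B_n$. I would rerun the proof in \cite{ONgV,TV}, which has three steps.
\begin{enumerate}[(i)]
\item Smooth the counts with test functions, using the anti-concentration of Lemma \ref{lemma:two-sided-nondegeneracy} together with the bound $n^{o(1)}$ on local zero counts to control boundary effects.
\item Express smoothed linear statistics through $\log|P_n|$ on small complex disks around $U$ and $V$, via Green's formula.
\item Apply a Lindeberg swapping argument, replacing the $\xi_i$ by Gaussians one at a time. This needs the four-moment-type bound on how much each coefficient affects the vector of values $(P_n(z))_{z\in D_U\cup D_V}$.
\end{enumerate}
The swapping step only uses that each single coefficient has small influence, namely $\sup_i |a_i(z)|^2 \ll n^{-c}\sum_i|a_i(z)|^2$ uniformly over the relevant $z$. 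This holds for $z$ near $-V'$ exactly as for $z$ near $V'$, because $|a_i(-z)|=|a_i(z)|$. The needed anti-concentration at points near $-V'$ comes from Lemma \ref{lemma:two-sided-nondegeneracy}. No symmetry of the joint law is required here, since the Lindeberg argument compares the $\xi$-model and the Gaussian model directly for any finite collection of evaluation points. Matching the first two moments suffices for an error $n^{-c_0}$, provided the third moment is bounded, which subgaussianity gives.

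\textbf{Main obstacle.} I expect the hard step to be keeping the error uniform in the mixed case when $|U|$ and $|V|$ are both close to $\sqrt n$. Then both windows draw on the same coefficient indices $i\approx x^2$, so the joint vector is genuinely dependent. I would check that the argument of \cite{TV} never uses separation between the two test regions, only the per-point small-ball and non-degeneracy bounds. If it does use separation, I would first reduce to $|U-(-V)|$ large, which is automatic since $U,V\subset\mathcal B_n$ lie at distance $\ge 2n^{\varepsilon_b}$. This makes the joint Gaussian covariance well-conditioned for the log-integrability estimates.
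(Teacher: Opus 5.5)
Your proposal is correct and follows essentially the paper's route: both reduce to the quantitative local universality theorem \cite[Theorem 5.2]{TV} as used in \cite[Lemma 7.8]{ANgW-var}. The paper, however, simply notes that this theorem is stated for centers with $n^{\varepsilon}\le|x_i|\le\sqrt n$, so positive, negative and mixed pairs are all covered at once, with no reflection step, no rerun of the Lindeberg argument and no separation between $U$ and $V$ (so your ``main obstacle'' never arises). One slip: the Jensen argument of Lemma \ref{lemma:local-moments} with $h=1$ gives only $N_U\ll\log n$ with overwhelming probability, hence $\E N_U\ll\log n$ rather than $O(1)$, so rely instead on your alternative (Gaussian Kac--Rice $\rho_1=O(1)$ transferred via \eqref{eq:local-first-comparison}), which matches the paper's appeal to Eq.~(107) of \cite{ANgW-var}.
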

\begin{proof}
The proof of \cite[Lemma 7.8]{ANgW-var} derives precisely these estimates
from the quantitative local universality theorem
\cite[Theorem 5.2]{TV} (see also \cite[Theorem 2.6, 5.1]{ONgV}).  That theorem is stated for centers satisfying
\[
n^\varepsilon\le |x_i|\le\sqrt n,
\]
and therefore applies without a change to positive, negative, or reflected
pairs of centers.  For $U=V$, write the second moment as the second factorial
moment plus the first moment and use the one- and two-point versions of local
universality.  The smoothing and repulsion argument in the proof of
\cite[Lemma 7.8]{ANgW-var} is uniform in the allowed centers and yields a
fixed power saving. We can decrease that exponent if necessary to obtain
\eqref{eq:local-first-comparison}--\eqref{eq:local-second-comparison}.
Lastly, the expectation estimates follow from Eq. (107) of \cite[Lemma 7.8]{ANgW-var}. 
\end{proof}
We now proceed to compute the variance.

\begin{proof}(of Proposition \ref{prop:two-sided-variance}) We first choose the block lengths to be
$h=L_n^\gamma$ with $\gamma>0$ so small that
\begin{equation}
\frac{\gamma}{2}<c_0,
\label{eq:gamma-universality}
\end{equation}
where $c_0$ is from Lemma \ref{lemma:reflected-local-comparison}.  For either
coefficient distribution $\xi$ or standard gaussian $G$, \eqref{eq:paired-independence} gives
\[
\sum_{|j-\ell|\ge2}\Cov(N_{J_j^\pm},N_{J_\ell^\pm})=o(L_n),
\]
because there are only polynomially many block pairs and the error exponent
$A$ is arbitrary.

It remains to compare covariances for $|j-\ell|\le1$.  Split each component
of $J_j^\pm$ into $O(h)$ intervals of length at most one.  By Lemma
\ref{lemma:reflected-local-comparison}, the difference between the
$\boldsymbol\xi$ and Gaussian covariance for any pair of these unit
intervals is $O(n^{-c_0})$; here the bounded first moments control the
difference of the products of expectations.  There are $O(L_n/h)$ neighboring
block pairs and $O(h^2)$ unit-interval pairs inside each.  Consequently,
\begin{equation}\label{eqn:xigau}
\Big|\Var(N_{\mathcal B_n,\boldsymbol\xi})
-\Var(N_{\mathcal B_n,G})\Big|
\le O\Big(\frac{L_n}{h}h^2n^{-c_0}\Big)+o(L_n)
=O(L_nh n^{-c_0})+o(L_n)=o(L_n),
\end{equation}
where the last equality follows from $L_n=n^{1/2}$ and
\eqref{eq:gamma-universality}.

For Gaussian coefficients, \cite[Theorem 4]{DV} gives
\[
\Var(N_{\mathbb R,G})=(2K+o(1))L_n.
\]
By Lemma \ref{lemma:full-line-discarded},
\[
\left[\E\Big((N_{\mathbb R,G}-\E N_{\mathbb R,G})
-(N_{\mathcal B_n,G}-\E N_{\mathcal B_n,G})\Big)^2\right]^{1/2}=o(L_n^{1/2}).
\]
The triangle inequality in $L^2$ therefore shows that
\[
\Var(N_{\mathcal B_n,G})=(2K+o(1))L_n.
\]
Combining with \eqref{eqn:xigau}, we obtain \eqref{eq:two-sided-variance}.
\end{proof}

\subsection{Moments and completion of the proof}
We can now establish fluctuation for $N_{\CB_n}$.
\begin{proposition}[CLT on the two-sided interval]
\label{prop:two-sided-bulk-CLT}
Under the hypotheses of Theorem \ref{thm:full-real},
\[
\frac{N_{\mathcal B_n}-\E N_{\mathcal B_n}}
{\sqrt{\Var(N_{\mathcal B_n})}}
\xrightarrow{d}\BN(0,1).
\]
\end{proposition}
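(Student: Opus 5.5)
The plan is to run the cluster moment method of Section \ref{section:CLT} verbatim on the paired block sum $T_n=\sum_j X_j=N_{\mathcal B_n}-\E N_{\mathcal B_n}$ from \eqref{eq:paired-block-sum}. Here $L_n=\sqrt n$ plays the role of $M$, and there are $O(L_n/h)$ blocks with $h=L_n^\gamma$. For a fixed moment order $k$ we choose $\gamma<\min\{1/(10(3k+1)),\,2c_0\}$, so that both the cluster estimates and the variance comparison \eqref{eq:gamma-universality} are available with the same partition. We expand
\[
\E T_n^k=\sum_{j_1,\dots,j_k}\E X_{j_1}\cdots X_{j_k}
\]
and group tuples into clusters, joining two positions whenever their block indices differ by at most one. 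The two inputs replacing Proposition \ref{prop:radial-independence} and Lemma \ref{lemma:local-moments} are already recorded: the approximate finite-range independence \eqref{eq:paired-independence} for paired blocks, which is a consequence of Lemma \ref{lemma:reflected-localization} and the two-sided transversality Lemma \ref{lemma:two-sided-nondegeneracy}, and the local moment bound \eqref{eq:paired-local-moments}.

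The three cluster steps then go through as follows.
\begin{enumerate}[(1)]
\item Singleton clusters. Each tuple containing a singleton $\{j\}$ factorizes up to $O(n^{-A})$ and contains the factor $\E X_j=0$. Since the number of tuples is polynomial in $n$, the total contribution is negligible.
\item Tuples with no singleton and some cluster of size at least three. Such a tuple has at most $(k-1)/2$ cluster locations. Applying H\"older within clusters and \eqref{eq:paired-local-moments}, the contribution is at most
\[
(L_n/h)^{(k-1)/2}n^{o(1)}h^{2k}=o(L_n^{k/2}).
\]
\item Pair clusters, which occur only for $k=2q$. Exactly as in Lemma \ref{lemma:pair-clusters}, each of the $(2q-1)!!$ pairings contributes $S^q+o(L_n^q)$, where $S=\sum_{|i-j|\le1}\E X_iX_j$. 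By \eqref{eq:paired-independence} again, $S=\Var(T_n)+o(L_n)$.
\end{enumerate}
Combining the three steps gives $\E T_n^{2q}=(2q-1)!!\Var(T_n)^q+o(L_n^q)$ and $\E T_n^{2q+1}=o(L_n^{q+1/2})$.

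To normalize, we need $\Var(T_n)\gg L_n$. This is exactly where Proposition \ref{prop:two-sided-variance} enters: it gives $\Var(N_{\mathcal B_n})=(2K+o(1))\sqrt n$ with $K>0$, the Gaussian Weyl constant from \cite{DV}. Dividing the moment identities by the appropriate powers of $\sqrt{\Var(T_n)}$, all normalized moments converge to those of $\BN(0,1)$. Since the Gaussian law is determined by its moments, the CLT follows.

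The only genuinely new point relative to Section \ref{section:CLT} is that each block $J_j^\pm$ has two spatial components, at $x$ and $-x$, which share the same coefficient window $W_j$. This is why we pair them. Dependence between $J_j$ and $-J_j$ is therefore absorbed inside a single variable $X_j$, and the only cross-block dependence is through the index distance $|j-\ell|$, exactly as on the positive axis. I expect the main point to check carefully is that \eqref{eq:paired-independence} really holds for monomials in the paired counts. This requires the localization in Lemma \ref{lemma:reflected-localization} to hold simultaneously on both components with one coefficient set, on an event of overwhelming probability. That in turn uses symmetry \eqref{eq:symmetric-atom} via Lemma \ref{lemma:two-sided-nondegeneracy} for the $C^1$ stability margin on $-B_n$. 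Once these are in place, the rest is bookkeeping identical to Lemmas \ref{lemma:singleton}--\ref{lemma:pair-clusters}.
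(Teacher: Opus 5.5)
Your proposal is correct and follows the paper's proof essentially verbatim. Like the paper, you rerun the cluster expansion of Lemmas \ref{lemma:singleton}--\ref{lemma:pair-clusters} for the paired block variables $X_j$ using \eqref{eq:paired-independence} and \eqref{eq:paired-local-moments}, with $\gamma<1/(10(3k+1))$ together with \eqref{eq:gamma-universality}, and then normalize via $\Var(T_n)\asymp L_n$ from Proposition \ref{prop:two-sided-variance} and moment determinacy.
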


\begin{proof}
Fix a moment order $k$ and, in addition to
\eqref{eq:gamma-universality}, choose $\gamma>0$ sufficiently small that
\[
\gamma<\frac{1}{10(3k+1)}.
\]
Expand $T_n^k$ using the paired centered counts $X_j$ from
\eqref{eq:paired-block-sum}, and join two positions whenever their block
indices differ by at most one.  The proofs of Lemmas
\ref{lemma:singleton}--\ref{lemma:pair-clusters} apply verbatim with $Y_j$
replaced by $X_j$, using \eqref{eq:paired-independence} and
\eqref{eq:paired-local-moments}.  For clarity, a tuple with no singleton and
with a cluster of size at least three has at most $(k-1)/2$ freely chosen
block locations, so its total contribution is bounded by
\[
n^{o(1)}\left(\frac{L_n}{h}\right)^{(k-1)/2}h^{2k}
=o(L_n^{k/2}).
\]
For $k=2q$, the pair clusters give
\[
(2q-1)!!\,\Var(T_n)^q+o(L_n^q),
\]
while for odd $k=2q+1$ the moment is $o(L_n^{q+1/2})$.  Proposition
\ref{prop:two-sided-variance} gives $\Var(T_n)\asymp L_n$.  After
normalization, all moments therefore converge to the standard Gaussian
moments.  Moment determinacy proves the proposition.
\end{proof}

\begin{proof}[Proof of Theorem \ref{thm:full-real}]
By \eqref{eq:discarded-full-line},
\[
N_{\mathbb R}-\E N_{\mathbb R}
=T_n+(D_n-\E D_n).
\]
Proposition \ref{prop:two-sided-variance} gives
$\Var(T_n)=(2K+o(1))\sqrt n$, whereas Lemma
\ref{lemma:full-line-discarded} gives
\[
\sqrt{\Var(D_n)}=o(n^{1/4})=o(\sqrt{\Var(T_n)}).
\]
It follows from the $L^2$ triangle inequality that
\[
\left|\sqrt{\Var(N_{\mathbb R})}-\sqrt{\Var(T_n)}\right|
=o(n^{1/4}).
\]
This proves \eqref{eq:full-real-variance}.  Proposition
\ref{prop:two-sided-bulk-CLT} and Slutsky's theorem now give the asserted
central limit theorem.
\end{proof}

\appendix

\section{Near-edge one-point anti-concentration}
\label{appendix:near-edge-smallball}

In this appendix we verify Proposition \ref{prop:near-edge}.  The argument is
the one-point, two-dimensional part of Sections 3--4 of
\cite{ANgW-var}.  We include the verification because the interval in the
main text reaches to within \(n^{\varepsilon_b}\) of the hard edge, whereas
the main bulk interval in \cite{ANgW-var} is stated with a larger edge
separation.  The only modification is to check that all local coefficient
windows used in the Diophantine argument still lie below the truncation
index \(n\).

Fix
\[
0<\varepsilon_b<\frac14
\]
and choose
\begin{equation}
0<\varepsilon_g<2\varepsilon_b.
\label{eq:epsg-choice}
\end{equation}
We first prove the following scale-local statement.

\begin{lemma}[near-edge version of \cite{ANgW-var} on $2d$ small ball bound for a fixed $x$]
\label{lemma:near-edge-pointwise}
Let \(M\to\infty\), let \(x\asymp M\), and assume
\begin{equation}
    0<x\le \sqrt n-M^{\varepsilon_g}.
\label{eq:near-edge-range}
\end{equation}
For every fixed \(C>0\), uniformly in such \(x\),
\begin{equation}
    \sup_{a\in\R^2}
\P\left(
(P_n(x),P_n'(x))\in B(a,M^{-C})
\right)
\ll M^{-2C}.
\label{eq:near-edge-2d-smallball}
\end{equation}
The corresponding one-dimensional estimate
\begin{equation}
\sup_{a\in\R}
\P\left(P_n(x)\in[a-M^{-C},a+M^{-C}]\right)
\ll M^{-C}
\label{eq:near-edge-1d-smallball}
\end{equation}

also holds uniformly.
\end{lemma}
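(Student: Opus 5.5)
We follow the Fourier-analytic (Esseen) route of \cite[Sections 3--4]{ANgW-var}, adapted to the near-edge range \eqref{eq:near-edge-range}. Write $(P_n(x),P_n'(x))=\sum_{i=0}^n\xi_i\,(a_i(x),a_i'(x))$ with $a_i(x)=e^{-x^2/2}x^i/\sqrt{i!}$ and $a_i'(x)=\frac{i-x^2}{x}a_i(x)$. By Esseen's inequality,
\[
\sup_{a}\P\big((P_n,P_n')\in B(a,M^{-C})\big)\ll M^{-2C}\int_{|t|\le M^{C}}\prod_{i}|\phi_\xi(t_1a_i+t_2a_i')|\,dt,
\]
where $\phi_\xi$ is the characteristic function of $\xi$. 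We therefore need to show that the integral is $O(1)$. The one-dimensional estimate \eqref{eq:near-edge-1d-smallball} then follows by the same argument with $t_2=0$, or by projecting.

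\textbf{Step 1: coefficient window.} We restrict attention to indices $i=x^2+s x$ with $|s|\le L$, for a large fixed $L$. On this window,
\[
a_i\asymp x^{-1/2}e^{-s^2/2},\qquad \frac{a_i'}{a_i}=s+O(1/x).
\]
The window must lie below the truncation index $n$. This holds because $n-x^2=(\sqrt n-x)(\sqrt n+x)\ge M^{\varepsilon_g}x\gg Lx$ by \eqref{eq:near-edge-range}. The same bound covers the finite-difference shifts $i\mapsto i+O(1)$ used later. This is the only place where the near-edge assumption enters.

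\textbf{Step 2: small frequencies.} Use $|\phi_\xi(u)|\le \exp(-c\|u\|^2)$, with a suitable distance-to-lattice interpretation in the lattice case, as in \cite{ANgW-var}. For $|t|\le M^{1/2-\delta}$, the vectors $(a_i,a_i')$ in the window are nondegenerate: their Gram matrix is $\asymp \mathrm{diag}(1,1)$ up to the factor from summing about $x$ terms. This gives Gaussian decay $\exp(-c|t|^2)$, which is integrable.

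\textbf{Step 3: large frequencies.} For $M^{1/2-\delta}\le|t|\le M^{C}$ we need
\[
\sum_{i\in\text{window}}\big\|t_1a_i+t_2a_i'\big\|_{\R/\Z}^2\ge C'\log M
\]
outside a set of $t$ of small measure. This is the Diophantine step of \cite{ANgW-var}. The sequence $i\mapsto a_i(x)$ is smooth in $s=(i-x^2)/x$ and has ratios $a_{i+1}/a_i=x/\sqrt{i+1}$. Taking finite differences in $i$ then forces near-integrality of several smooth quantities simultaneously, and counting shows this happens only on a set of $t$ of measure $O(1)$. We reproduce their argument verbatim. We only re-check that every index $i+j$ (for bounded $j$) used in the differencing stays in $[0,n]$, which Step 1 guarantees.

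\textbf{Main obstacle.} The main obstacle is Step 3. The arithmetic argument there is long, and uniformity in $x$ near the edge must be tracked. However, all of its inputs are local asymptotics of $a_i(x)$ in the window, which are unaffected by the edge once Step 1 holds. So we expect the constants of \cite{ANgW-var} to carry over uniformly.
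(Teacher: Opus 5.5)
Your proposal follows essentially the same route as the paper. The Esseen/characteristic-function and Diophantine argument of \cite{ANgW-var} is reused unchanged, and the only new verification is that the local coefficient window near $x^2$ (with its finite-difference shifts) stays below the truncation index. This follows from $n-x^2=(\sqrt n-x)(\sqrt n+x)\ge M^{\varepsilon_g}x\gg Lx$.

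One small correction: the shifts in \cite{ANgW-var} have the form $i+mq$ with $q\asymp x/\log^4 M$, not $i+O(1)$. Since these indices still lie in $x^2+O(x)$, the same room bound covers them.
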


\begin{proof}
We indicate precisely why the proof of
\cite[Theorems 3.13, 3.21, 4.1 and 4.2]{ANgW-var} applies under
\eqref{eq:near-edge-range}.

Set
\[
b_i(x)=\sqrt M\,e^{-x^2/2}\frac{x^i}{\sqrt{i!}},
\qquad
c_i(x)=b_i'(x)
=
\frac{i-x^2}{x}b_i(x).
\]
The local estimates of \cite[Claim 3.1]{ANgW-var} depend only on
\(x\asymp M\) and Stirling's formula, and hence are unchanged.

The covariance input is already stated in the required range:
\cite[Claim 3.2]{ANgW-var} asserts that, for
\(x\asymp M\) and \(x\le\sqrt n-M^{\varepsilon_g}\),
\begin{equation}
\frac1M\sum_{i=0}^n
\begin{pmatrix}b_i(x)\\c_i(x)\end{pmatrix}
\begin{pmatrix}b_i(x)&c_i(x)\end{pmatrix}
=
I_2+\exp(-M^{c})
\label{eq:near-edge-cov}
\end{equation}

for some \(c=c(\varepsilon_g)>0\).

It remains only to check the Diophantine step.  In the proof of
\cite[Theorem 3.13]{ANgW-var}, for a putative vector
\(D=(D_1,D_2)\) one works on an index interval
\begin{equation}
J\subset x^2+[Lx/2,Lx],
\label{eq:ANgW-index-window}
\end{equation}

where \(L>0\) is a fixed constant depending only on the frequency exponent.
Under \eqref{eq:near-edge-range},
\begin{equation}
n-x^2
=
(\sqrt n-x)(\sqrt n+x)
\ge
M^{\varepsilon_g}x.
\label{eq:index-room}
\end{equation}
Since \(M^{\varepsilon_g}\to\infty\), (\ref{eq:index-room}) implies that, for every fixed
\(L\),
\[
x^2+Lx<n
\]
for all sufficiently large \(n\).  Thus the entire index interval
\eqref{eq:ANgW-index-window}, and all the shifted indices \(i+mq\) used in the finite
difference argument of \cite[Subsection 3.12]{ANgW-var}, remain in
\(\{0,\ldots,n\}\).

All subsequent estimates in that subsection are local in this index window
and use only \(x\asymp M\), the Stirling bounds of Claim 3.1, and the facts
\[
|J|\asymp \frac{x}{\log^3M},
\qquad
q\asymp\frac{x}{\log^4M}.
\]
Hence they are unchanged.  For completeness, we mention the only point at
which cancellation between the value and derivative coordinates could
matter.  The leading finite-difference term has the form
\begin{equation}
b_i(z-1)^T
\left(
D_1+xD_2-D_2\frac{i}{x}
\right).
\label{eq:leading-diff}
\end{equation}
The proof of \cite[Theorem 3.13]{ANgW-var} splits into two cases.  If
\[
|D_1+xD_2|\le |D_2(x+L_0)|,
\]
one takes \(L=8L_0\); then \(i/x\ge x+4L_0\) on the chosen part of
\eqref{eq:ANgW-index-window}, and therefore
\begin{equation}
    \left|D_2\frac{i}{x}\right|-|D_1+xD_2|
\ge 3L_0|D_2|.
\label{eq:case1-gap}
\end{equation}
If instead
\[
|D_1+xD_2|>|D_2(x+L_0)|,
\]
one takes \(L=L_0/2\), so that \(i/x\le x+L_0/2\), and
\begin{equation}
|D_1+xD_2|-\left|D_2\frac{i}{x}\right|
\ge \frac{L_0}{2}|D_2|.
\label{eq:case2-gap}
\end{equation}
Thus the leading expression in \eqref{eq:leading-diff} cannot be destroyed by
cancellation.  The error terms are smaller by the same choice of the large
constant \(L_0\), exactly as in \cite[Fact 3.18]{ANgW-var}. It remains to make the bound uniform when $D_2$ is small. Recall that \cite{ANgW-var}  sets $r^2<D_1^2+D_2^2\leq n^A$. If
$|D_2|\geq r/(32L_0)$, then (\ref{eq:case1-gap})
--(\ref{eq:case2-gap}) give
\[
    \left|D_1+xD_2-D_2\frac{i}{x}\right|\geq c r.
\]
Otherwise $D_1^2+D_2^2\geq r^2$ implies $|D_1|\geq 3r/4$; since
$|i/x-x|\leq 8L_0$. In either case,
\[
    \left|D_1-D_2\left(\frac{i}{x}-x\right)\right|
    \geq |D_1|-8L_0|D_2|\geq \frac r2.
\]


Thus the leading expression in (\ref{eq:leading-diff}) is uniformly bounded away from zero.  This proves
the same two-dimensional Diophantine lower bound as
\cite[Theorem 3.13]{ANgW-var}, uniformly under \eqref{eq:near-edge-range}.

Consequently the characteristic-function estimate
\cite[Theorem 3.21]{ANgW-var} holds in this range.  Combining it with
the covariance estimate \eqref{eq:near-edge-cov}, the Esseen argument of
\cite[Theorems 4.1, 4.2]{ANgW-var} is literally unchanged and yields
\eqref{eq:near-edge-2d-smallball}--\eqref{eq:near-edge-1d-smallball}.
\end{proof}

\begin{lemma}[uniform near-edge non-degeneracy]
\label{lemma:near-edge-window}
Fix \(0<c_1<c_2\).  Let
\[
I=[c_1M,c_2M]\subset
[0,\sqrt n-M^{\varepsilon_g}].
\]
For every \(\theta>0\) and every fixed \(0<\eta<1/2\),
\begin{equation}
    \P\left(
\inf_{x\in I}
\sqrt{|P_n(x)|^2+|P_n'(x)|^2}
\le M^{-\theta}
\right)
\ll
M^{-\theta+1+\eta}.
\label{eq:window-uniform}
\end{equation}
Moreover, at each deterministic \(a\in I\),
\begin{equation}
\P(|P_n(a)|\le M^{-\theta})\ll M^{-\theta}.
\label{eq:window-endpoint}
\end{equation}
\end{lemma}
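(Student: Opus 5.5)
We deduce both estimates from the pointwise bounds of Lemma \ref{lemma:near-edge-pointwise} by a net argument, exactly as \cite[Theorem 4.4, Lemma 4.5]{ANgW-var} deduce the uniform statement from \cite[Theorem 4.2]{ANgW-var}. The endpoint estimate \eqref{eq:window-endpoint} is immediate: for fixed \(a\in I\) we have \(a\asymp M\) and \(a\le\sqrt n-M^{\varepsilon_g}\), so \eqref{eq:near-edge-1d-smallball} with \(C=\theta\) applies directly.

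For \eqref{eq:window-uniform}, first restrict to a good event on which the second derivative is controlled. On \(\sum_i\xi_i^2\le n^2\) (the complement has exponentially small probability), the Cauchy--Schwarz/Cauchy-estimate argument used in Lemma \ref{lemma:local-moments} gives
\[
\sup_{x\in I}|P_n''(x)|\le \Lambda
\]
with \(\Lambda\) polynomial in \(n\). We need \(\Lambda\) to be \(M^{\eta}\), not merely polynomial in \(n\), so we work with the subgaussian tail of the fixed-\(x\) sums instead. By Claim 3.1 of \cite{ANgW-var}, \(P_n''(x)=\sum_i\xi_i a_i''(x)\) has coefficient \(\ell^2\) mass \(O(1)\), since \(a_i''\) is \(a_i\) times \(O((i-x^2)^2/x^2+1)\) and the mass is localized at \(|i-x^2|\lesssim x\). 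Subgaussian concentration at each point of a fine grid, together with a crude polynomial Lipschitz bound between grid points, then gives
\[
\sup_{x\in I}\big(|P_n''(x)|+|P_n'''(x)|\big)\le M^{\eta/2}
\]
outside probability \(\exp(-M^{c\eta})\).

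Now take a net \(\{x_k\}\subset I\) of mesh \(\delta=M^{-\theta-\eta/2}\), consisting of \(O(M^{1+\theta+\eta/2})\) points. Suppose some \(x\in I\) satisfies \(|P_n(x)|+|P_n'(x)|\le M^{-\theta}\), and let \(x_k\) be the nearest net point. Taylor expansion gives
\[
|P_n'(x_k)|\le M^{-\theta}+\delta M^{\eta/2}\ll M^{-\theta},\qquad |P_n(x_k)|\le M^{-\theta}+\delta M^{-\theta}+\delta^2M^{\eta/2}\ll M^{-\theta}.
\]
Thus \((P_n(x_k),P_n'(x_k))\) lies in a ball of radius \(O(M^{-\theta})\). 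Applying \eqref{eq:near-edge-2d-smallball} with \(C=\theta\) and a union bound gives
\[
M^{1+\theta+\eta/2}\cdot M^{-2\theta}=M^{-\theta+1+\eta/2},
\]
which is within the claimed bound \eqref{eq:window-uniform}. The uniformity in \(x\) of Lemma \ref{lemma:near-edge-pointwise} over the range \eqref{eq:near-edge-range} is exactly what makes the union bound legitimate near the edge.

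The main obstacle is the derivative bound: making the sup of \(|P_n''|\) on \(I\) only \(M^{o(1)}\) rather than \(n^{O(1)}\). A polynomial-in-\(n\) bound would cost a factor \(n^{O(1)}\) in the net cardinality, which cannot be absorbed into \(M^{\eta}\) when \(M\) is small relative to \(n\). To handle this we first use Lemma \ref{lemma:weyl-C1-tail}, extended to second derivatives, to see that \(a_i''(x)\) has \(O(1)\) total mass. The grid concentration argument then adds at most a \(\sqrt{\log M}\) factor. Failing that, we can fall back on the weaker exponent \(M^{-\theta+1+\eta}\) by choosing the mesh \(M^{-\theta-\eta}\), which is what the statement allows.
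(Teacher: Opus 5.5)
Your proposal is correct and follows essentially the paper's argument: a derivative bound of size $M^{\eta/2}$ off an event of probability $\exp(-M^{c})$, obtained from subgaussian concentration of the Poisson-weighted Weyl sums plus a net, then a discretization of $I$, the pointwise two-dimensional small-ball estimate of Lemma~\ref{lemma:near-edge-pointwise} at each deterministic net point, and a union bound, with \eqref{eq:window-endpoint} read off directly from \eqref{eq:near-edge-1d-smallball}. The only difference is bookkeeping: you use mesh $M^{-\theta-\eta/2}$ and radius $O(M^{-\theta})$ via a second-order Taylor expansion, whereas the paper uses mesh $M^{-\theta}$ and radius $M^{-\theta+\eta/2}$, which is why you get the slightly better exponent $-\theta+1+\eta/2$ (one caveat: your grid step for $\sup_I|P_n''|$, if it uses an $n$-polynomial Lipschitz constant, needs $M^{\eta}\gg\log n$, which holds where the lemma is applied, since $M\ge n^{\varepsilon_b}$ there).
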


\begin{proof}
The proof of \cite[Theorem 4.4]{ANgW-var} now applies verbatim.
We recall the short argument.  Subgaussianity and the Poisson moment
identities for the Weyl coefficients imply that, outside an event of
probability \(\exp(-M^c)\),
\begin{align}
\sup_{x\in I}
\left\|
\frac{d}{dx}(P_n(x),P_n'(x))
\right\|_2
\le M^{\eta/2}.
\label{eq:derivative-bound}
\end{align}
The estimate \eqref{eq:derivative-bound} remains uniform under \eqref{eq:near-edge-range}. Indeed, the proof of \cite[Equations (69)--(71)]{ANgW-var} uses only upper bounds for the Poisson-weighted sums of the first three derivatives of the Weyl basis; these bounds hold uniformly for $(x\asymp M)$, and truncating the sums at $i=n$ can only improve them, so the same subgaussian concentration and net arguments apply unchanged.
Now partition \(I\) into intervals of length \(M^{-\theta}\).  There are
\(O(M^{1+\theta})\) such intervals.  If the vector
\((P_n,P_n')\) has norm at most \(M^{-\theta}\) somewhere in one of them,
then at its midpoint it has norm
\(O(M^{-\theta+\eta/2})\).  Lemma
\ref{lemma:near-edge-pointwise}, with the exponent adjusted by the fixed
factor \(M^{\eta/2}\), bounds the probability at one midpoint by
\[
O(M^{-2\theta+\eta}).
\]
The union bound therefore gives
\[
O(M^{1+\theta}M^{-2\theta+\eta})
=
O(M^{-\theta+1+\eta}),
\]
which proves \eqref{eq:window-uniform}.  This slightly weaker exponent than the one recorded
in \cite[Theorem 4.4]{ANgW-var} is more than sufficient here because
\(\theta\) may be chosen arbitrarily large.
The endpoint estimate \eqref{eq:window-endpoint} is \eqref{eq:near-edge-1d-smallball}.
\end{proof}

\begin{proof}[Proof of Proposition \ref{prop:near-edge}]
Cover
\[
B_n=[n^{\varepsilon_b},\sqrt n-n^{\varepsilon_b}]
\]
by \(O(\log n)\) overlapping multiplicative intervals
\[
I_\nu=[c_1M_\nu,c_2M_\nu],
\]
with fixed \(0<c_1<c_2\) and \(M_\nu\asymp x\) on \(I_\nu\).
Every \(M_\nu\ge n^{\varepsilon_b}\), while
\(M_\nu\le\sqrt n\).  By \eqref{eq:epsg-choice},
\[
M_\nu^{\varepsilon_g}
\le
n^{\varepsilon_g/2}
=o(n^{\varepsilon_b}).
\]
Hence, for all sufficiently large \(n\),
\[
I_\nu\subset
[0,\sqrt n-M_\nu^{\varepsilon_g}],
\]
so Lemma \ref{lemma:near-edge-window} applies to every \(I_\nu\).

Fix \(A>0\).  Choose \(\theta\) so large that
\[
\varepsilon_b(\theta-1-\eta)>A+10.
\]
Since \(M_\nu\ge n^{\varepsilon_b}\), Lemma
\ref{lemma:near-edge-window} and a union bound over \(O(\log n)\) windows
give
\[
\P\left(
\inf_{x\in B_n}
\sqrt{|P_n(x)|^2+|P_n'(x)|^2}
\le n^{-B}
\right)
=O(n^{-A})
\]
for a suitable fixed \(B>0\).  Since
\(
|u|+|v|\asymp\sqrt{u^2+v^2}.
\)

For \eqref{eq:ne-u-nond}, use the one-dimensional estimate \eqref{eq:window-endpoint}.  If the deterministic
collection contains at most \(n^K\) points, increase \(\theta\) so that
\(\varepsilon_b\theta>A+K+10\), and take a union bound.  This proves the
proposition.
\end{proof}

\section{Proofs of the lemmas}\label{appendix:lemmas}

We first provide an elementary proof of Lemma \ref{lemma:zero-homotopy}.

\begin{proof}
Fix \(t_0\in[0,1]\).  Since
\[
|H(t_0,a)|,\ |H(t_0,b)|\ge\eta,
\]
all zeros of \(H(t_0,\cdot)\) lie in the interior \((a,b)\).  Moreover, at
every such zero \(x_0\),
\[
|\partial_xH(t_0,x_0)|\ge\eta,
\]
so the zero is simple and the zero set contains no accumulation points.  Thus a compact interval contains only finitely many
simple zeros; write them as \(x_1,\ldots,x_m\).  


By the implicit function theorem, for each \(x_r\) there are open neighborhoods
\(U_r\) of \(x_r\) and \(V_r\) of \(t_0\) such that, for every
\(t\in V_r\), the function \(H(t,\cdot)\) has exactly one zero in \(U_r\),
depending continuously on \(t\).  Choose the \(U_r\)'s disjoint.

On the compact complement
\[
K=[a,b]\setminus\bigcup_{r=1}^m U_r
\]
we have \(H(t_0,x)\ne0\).  Hence
\[
\min_{x\in K}|H(t_0,x)|>0.
\]
By continuity, after shrinking the common neighborhood
\(V=\bigcap_rV_r\) of \(t_0\), we have \(H(t,x)\ne0\) for all
\(t\in V\) and \(x\in K\).  Thus, for every \(t\in V\),
\(H(t,\cdot)\) has exactly the same number \(m\) of zeros in \([a,b]\).

Therefore the zero count is locally constant as a function of \(t\).
Since \([0,1]\) is connected, every integer-valued locally constant
function on \([0,1]\) is constant.  Hence the number of real zeros of
\(H(t,\cdot)\) in \(I\) is independent of \(t\).
\end{proof}

We next prove Lemma \ref{lemma:weyl-C1-tail}.

\begin{proof}
Set
\[
a_i(x):=e^{-x^2/2}\frac{x^i}{\sqrt{i!}},
\qquad
\lambda:=x^2,
\]
and let \(X\sim\operatorname{Pois}(\lambda)\) \footnote{Connection to Poisson distribution was also used in \cite[Section 12]{TV} and \cite[Section 4]{ANgW-var}.}.  Then
\[
|a_i(x)|^2
=
e^{-x^2}\frac{x^{2i}}{i!}
=
e^{-\lambda}\frac{\lambda^i}{i!}
=
\P(X=i).
\]
Moreover,
\[
a_i'(x)
=
\left(\frac{i}{x}-x\right)a_i(x)
=
\frac{i-x^2}{x}a_i(x),
\]
and hence
\[
|a_i'(x)|^2
=
\frac{(i-\lambda)^2}{\lambda}\P(X=i).
\]
Thus, after enlarging the sum from \(0\le i\le n\) to all \(i\ge0\), the
left-hand side is bounded by
\begin{align}\label{eq:app2-decomp}
\P\bigl(|X-\lambda|\ge R\sqrt{\lambda}\bigr)
+
\frac1{\lambda}
\E\left[
(X-\lambda)^2
\mathbf 1_{\{|X-\lambda|\ge R\sqrt{\lambda}\}}
\right].
\end{align}

We use the standard Chernoff's bounds for a Poisson random variable; see,
for instance, \cite[Section 2.2]{BLM}. More precisely, if
\(X\sim\operatorname{Poisson}(\lambda)\), then
\begin{equation}\label{Chernoff:upper}
\P(X\ge\lambda+t)
\le
\exp\left\{-\lambda h\left(\frac{t}{\lambda}\right)\right\},
\qquad
h(s):=(1+s)\log(1+s)-s.
\end{equation}
Since for $s\ge 0$
\[
h(s)\ge\frac{s^2}{2(1+s)},
\]
it follows that
\[
\P(X-\lambda\ge t)
\le
\exp\left(-\frac{t^2}{2(\lambda+t)}\right).
\]
For the lower tail, when \(0\le t\le\lambda\),
\begin{equation}\label{Chernoff:lower}
\P(X\le\lambda-t)
\le
\exp\left\{-\lambda h_-\left(\frac{t}{\lambda}\right)\right\},
\end{equation}
where
\[
h_-(s):=(1-s)\log(1-s)+s.
\]
Since for $0 \le s \le 1$,
\[
h_-(s)\ge\frac{s^2}{2}.
\] 
We obtain
\[
\P(\lambda-X\ge t)
\le
\exp\left(-\frac{t^2}{2\lambda}\right).
\]

Consequently, for \(0\le u\le\sqrt{\lambda}\),
\begin{equation}\label{eq:app2-mom1exp1}
\P\bigl(|X-\lambda|\ge u\sqrt{\lambda}\bigr)
\le
2e^{-cu^2}
\end{equation}
for some absolute constant \(c>0\).  Since
\[
R\le x^{1/3}=\lambda^{1/6}<\sqrt{\lambda},
\]
we obtain in particular
\begin{align}
\label{eq:app2-mom1exp2}\P\bigl(|X-\lambda|\ge R\sqrt{\lambda}\bigr)
\le
2e^{-cR^2}.
\end{align}

It remains to estimate the weighted tail.  Put
\[
Y:=\frac{|X-\lambda|}{\sqrt{\lambda}}.
\]
For every nonnegative random variable \(Y\),
\begin{align}\label{eq:app2-2mom}
\E\bigl[Y^2\mathbf 1_{\{Y\ge R\}}\bigr]
=
R^2\P(Y\ge R)
+
\int_R^\infty 2u\,\P(Y\ge u)\,du.
\end{align}
On the range \(R\le u\le\sqrt{\lambda}\), \eqref{eq:app2-mom1exp1} gives
\begin{align}
R^2\P(Y\ge R)
+
\int_R^{\sqrt{\lambda}}2u\,\P(Y\ge u)\,du
&\le
CR^2e^{-cR^2}
+
C\int_R^\infty u e^{-cu^2}\,du \nonumber \\
&\le
C(1+R^2)e^{-cR^2}.
\end{align}
For \(u\ge\sqrt{\lambda}\), we use the upper tail \eqref{Chernoff:upper} for $X \ge \la + u \sqrt{\la} $. Since \(h(s)\ge cs\) for \(s\ge1\), the contribution of
\(u\ge\sqrt{\lambda}\) to the integral in \eqref{eq:app2-2mom} is \(O(e^{-c\lambda})\).
As
\[
R^2\le\lambda^{1/3},
\]
this is bounded by
\(
O(e^{-c\lambda}) = O(e^{-cR^2}).
\)
Therefore
\begin{align}\label{app2-mom2exp}
\E\bigl[Y^2\mathbf 1_{\{Y\ge R\}}\bigr]
\le
C(1+R^2)e^{-cR^2}.
\end{align}
Combining \eqref{eq:app2-decomp}, \eqref{eq:app2-mom1exp2}, and \eqref{app2-mom2exp}, we obtain
\[
\P\bigl(|X-\lambda|\ge R\sqrt{\lambda}\bigr)
+
\E\left[
\frac{(X-\lambda)^2}{\lambda}
\mathbf 1_{\{|X-\lambda|\ge R\sqrt{\lambda}\}}
\right]
\le
C(1+R^2)e^{-cR^2}.
\]
\end{proof}

\end{document}